\documentclass[10pt]{amsart}
\usepackage[T1]{fontenc}
\usepackage{amsthm}
\usepackage{amssymb}
\usepackage{mathtools}
\usepackage{graphicx}
\usepackage{xcolor}
\usepackage{subcaption}
\usepackage[english]{babel}
\usepackage[autostyle]{csquotes}
\usepackage[hyperfootnotes=false]{hyperref}
\hypersetup{
    colorlinks=false,       
    linkbordercolor=blue,   
    citebordercolor=green,  
    pdftitle={Links of Mazur manifolds and exotica},
    pdfauthor={Sergey Nersisyan}
}

\newtheorem{prop}{Proposition}[section]
\newtheorem{cor}[prop]{Corollary}
\newtheorem{lem}[prop]{Lemma}
\newtheorem{thm}[prop]{Theorem}

\newtheorem{theorem}{Theorem}

\theoremstyle{definition}
\newtheorem{defn}[prop]{Definition}
\newtheorem{quest}[prop]{Question}

\theoremstyle{remark}
\newtheorem{remark}[prop]{Remark}

\numberwithin{equation}{section}
\usepackage{eucal}
\newcommand{\on}{\operatorname}
\newcommand{\A}{\mathcal A}
\newcommand{\B}{\mathcal B}
\newcommand{\C}{\mathcal C}
\newcommand{\M}{\mathcal M}
\newcommand{\BC}{{\mathbb C}}
\newcommand{\BP}{{\mathbb P}}
\newcommand{\BZ}{{\mathbb Z}}
\newcommand{\Id}{\operatorname{Id}}

\AtBeginDocument{\renewcommand{\coloneq}{\mathrel{\coloneqq}}}

\begin{document}
\title{Links of Mazur manifolds and exotica}
\author{Sergey Nersisyan}

\begin{abstract}
    In this paper, we explore links of Mazur manifolds in simple 4-manifolds. We construct non-split 2-component links in $S^4$. These are used to produce links in $\#^n \BC \BP^2$ which are split topologically but not smoothly. As a consequence, we obtain exotic pairs of simply connected, definite 4-manifolds with boundary, as well as exotic embeddings of various Mazur manifolds in $S^4$. 
\end{abstract}

\vspace*{-1cm}
\maketitle
\vspace*{-0.6cm}
\section{Introduction}\label{sec:introduction}

The study of exotica in dimension four has been one of the central topics in low-dimensional topology ever since the pioneering breakthroughs of Freedman \cite{Freedman} and Donaldson \cite{Donaldson}, providing positive and negative results, respectively. However, explicit constructions are often needed to apply these ideas. Perhaps the most useful tool for building smooth 4-manifolds is Kirby calculus, which originated in \cite{Kirby} and was fully developed in \cite{Gompf-Stipsicz}. Its first application to exotica appeared in \cite{Akbulut_cork}, where the different smooth structures are related by a twist of a contractible 4-manifold. It was later shown in \cite{Cork_thm} and \cite{Matveyev} that essentially all exotica can be localized in this way to a contractible piece called a cork. For this reason, it is important to understand smooth, contractible 4-manifolds and their embeddings in dimension four.

In this paper, we turn our attention to 2-component \enquote{links} of contractible 4-manifolds and study them through Kirby calculus.  
Recently, Golla and Marengon constructed non-split links of 2-spheres in $S^4$ that are nevertheless split by some homology 3-sphere \cite{Marcos}. Moving on to codimension-zero embeddings, we are interested in the analogous question for Mazur manifolds.

A smooth, compact, oriented 4-manifold $M$ is called \textit{Mazur}\footnote{While there is not necessarily a \textit{canonical} handle decomposition of $M$, we assume that a Mazur decomposition is given together with the Mazur manifold.} if it is contractible and admits a Kirby diagram consisting of one 0-handle, one 1-handle, and one 2-handle. See Figure~\ref{fig1} (left) for an example. We investigate the following question, keeping in mind both the topological and smooth categories.
\begin{quest}\label{Q}
Do there exist Mazur manifolds $M$ and $M'$ and an embedding of their disjoint union $$L : M \sqcup M' \hookrightarrow S^4,$$ such that $L$ is not split by $S^3$ but is split by some other homology 3-sphere?
\end{quest} 
Note that every such link is split by the boundaries $\partial M$ and $\partial M'$, which are homology spheres since they bound contractible manifolds. Thus, it would suffice to find a link $L$ not split by $S^3$.

A Mazur manifold is uniquely characterized by the attaching curve of its 2-handle, which is represented by a framed knot in $\partial(S^1 \times B^3) = S^1 \times S^2$ homotopic to the core $S^1 \times \{\mathrm{pt}\}$. For such a knot $K \subset S^1 \times S^2$,
we let $M(K)$ denote the corresponding Mazur manifold. We define the following relation between Mazur manifolds given by framed knots in $S^1 \times S^2$, before stating our first theorem.

\begin{defn}\label{ribbon}  Let $M = M(K)$ and $M' = M(K')$ be Mazur manifolds associated to framed knots $K, K' \subset S^1 \times S^2$. We say that $(M, M')$ is a \textit{ribbon pair} if there exists a ribbon concordance $$C \subset S^1 \times S^2 \times I$$ from $K$ to $K'$ preserving the framing. 
\end{defn}

\begin{theorem}\label{thm1}
Let $M = M(K)$ and $ M' = M(K')$ form a ribbon pair via a ribbon concordance $C \subset S^1 \times S^2 \times I$ from $K$ to $K'$.
There exists a smooth link $$L_C : M \sqcup -M' \hookrightarrow S^4$$ that is not split by a topological $S^3$. 
\end{theorem}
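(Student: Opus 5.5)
The plan is to realize $L_C$ as the boundary of a $5$-dimensional handlebody that is a $5$-ball, and then to show non-splitting with a fundamental group argument. The inputs are the $5$-dimensional cancellation of a $2$-handle against a homotopically dual $1$-handle, and Gordon's results on ribbon concordances.

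\emph{Construction.} Set $Y = S^1\times S^2$ and $P = S^1\times B^3\times I$; its boundary contains $Y\times I$. The concordance $C\subset Y\times I$ is an annulus, and its framing gives a neighborhood $\nu C\cong (S^1\times I)\times D^2$ in $Y\times I$. I will use this neighborhood to attach a $5$-dimensional $2$-handle $h=D^2\times D^3$ to $P$, viewing $\nu C$ as a thickening of a circle in $\partial P$, so that:
\begin{itemize}
\item over the end $Y\times\{0\}$ the attachment restricts to the framed $2$-handle along $K$;
\item over the end $Y\times\{1\}$ it restricts to the framed $2$-handle along $K'$.
\end{itemize}
The attaching circle of $h$ is $K\times\{0\}$ pushed into $P$, which is homotopic to the core of $S^1\times B^4$. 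In dimension $5$ homotopy implies isotopy for circles, so $h$ cancels the $1$-handle and $X = P\cup h\cong B^5$, giving $\partial X\cong S^4$. I will then decompose $\partial X$ as
\[
\partial X = M(K)\cup W\cup -M(K').
\]
Here the two ends of $\partial X$ are the copies of $S^1\times B^3$ with the $2$-handles along $K$ and along $K'$; the reversed orientation on the $K'$ end gives $-M'$. The middle piece is
\[
W = \bigl((Y\times I)\setminus \nu C\bigr)\cup(\text{the side of }h),
\]
a cobordism from $\partial M$ to $\partial M'$, namely the surgered concordance exterior. This defines the embedding $L_C: M\sqcup -M'\hookrightarrow S^4$.

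\emph{Structure of $W$.} Since $C$ is ribbon, it has only index $0$ and $1$ critical points. Hence $W$ is obtained from $\partial M\times I$ by attaching $1$- and $2$-handles, and, turned upside down, from $\partial M'\times I$ by attaching $2$- and $3$-handles. Two consequences follow:
\begin{enumerate}
\item $\pi_1(\partial M')\to\pi_1(W)$ is surjective;
\item by Gordon's argument for ribbon concordances, transplanted to the surgered exteriors, $\pi_1(\partial M)\to\pi_1(W)$ is injective.
\end{enumerate}
I also note that $\pi_1(S^4\setminus(M\cup M'))\cong\pi_1(W)$.

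\emph{Non-splitting.} Suppose a topological $S^3$ separates $S^4$ into $B_1\supset M$ and $B_2\supset M'$. Van Kampen along the simply connected $S^3$ gives
\[
\pi_1(W) = A * B,\qquad A=\pi_1(B_1\setminus \mathrm{int}\,M),\quad B=\pi_1(B_2\setminus\mathrm{int}\,M').
\]
The image of $\pi_1(\partial M')$ lies in $B$ and surjects onto $A*B$, so $A=1$. Then the image of $\pi_1(\partial M)$, which lies in $A$, is trivial. By injectivity (2) this forces $\pi_1(\partial M)=1$. This is a contradiction provided $\partial M$ is not a homotopy sphere, which holds for nontrivial Mazur manifolds; I would check it via the Mazur presentation or assume it as a harmless nondegeneracy condition.

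\emph{Main obstacles.} The step I expect to be the main obstacle is the construction: arranging the $5$-dimensional handle so that its two ends are precisely the framed $2$-handles of $M$ and $M'$, with the framings matching because $C$ preserves framing. The second delicate point is proving the injectivity (2) for $W$ rather than for the plain concordance exterior. For this I would use Gordon's residual finiteness argument, or the fact that $W\cup_{\partial M'}\overline{W}$ retracts onto $\partial M$.
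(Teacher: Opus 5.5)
Your proposal is correct, and it takes a genuinely different route from the paper in both halves.

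\textbf{The $S^4$ step.} The paper proves $M\cup W_C\cup -M'\cong S^4$ by four-dimensional Kirby calculus (Proposition~\ref{propA}). Zero-dot exchanges on the pairs $x_i,y_i$ identify $M\cup W_C$ with $M'$ while fixing $\mu_K$. A $0$-framed handle on $\mu_K$ and cancellations then reduce the diagram to the double of $M$. You instead realize this manifold as $\partial(P\cup h)$ with $P\cup h\cong B^5$. This is the concordance analogue of the paper's own argument in Section~\ref{2.1} that $X\times I\cong B^5$. Two small corrections: the attaching circle is a parallel copy of $K\times\{0\}$ inside $C\subset\partial P$, not pushed into $P$; and the isotopy you need lives in the $4$-manifold $\partial P\cong S^1\times S^3$, where homotopy also implies isotopy for circles.

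Your route never uses that $C$ is ribbon, so it gives $M\cup W_C\cup -M'\cong S^4$ for every framed concordance. The paper lists exactly this as out of reach in Section~\ref{7}. The Kirby-calculus route buys finer information in return. It gives $M\cup W_C\cup -M'=M\cup_{\mathrm{Id}}-M$, so $L_C|_M$ is smoothly standard, and it supplies explicit diagrams reused in Sections~\ref{4}--\ref{6}. A bare $\partial B^5$ decomposition does not give $W_C\cup -M'\cong -M$ (compare $M_0\cup_\theta -M^{\bullet}=S^4$).

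\textbf{Non-splitting.} The paper uses that both components are topologically standard, so a splitting sphere would force $\pi_1(W_C)=1$. You bypass the components and use surjectivity from $\partial M'$ together with injectivity from $\partial M$ inside the free product; this is equally valid. Your injectivity step is in fact the honest core of either argument. The relations are $x_i=g_ix_Kg_i^{-1}$, where $g_i$ may involve other $x_j$ because bands can pass through other minima. So in general one only has $\pi_1(\partial M)\hookrightarrow\pi_1(W_C)$, not the isomorphism stated in the paper. Proving that injection needs Gordon's argument: residual finiteness of $\pi_1(\partial M)$ (Hempel plus geometrization), and Gerstenhaber--Rothaus, which applies because the exponent-sum matrix is the identity.

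\textbf{Two cautions.} First, drop the alternative that $W\cup_{\partial M'}\overline W$ retracts onto $\partial M$. Since $\pi_1(\partial M')\to\pi_1(W)$ is onto, the double has the same $\pi_1$ as $W$. A retraction would therefore mean solving the relator equations inside $\pi_1(\partial M)$ itself, which is more than Gerstenhaber--Rothaus provides. Second, your nondegeneracy assumption is genuinely needed: for $M\cong B^4$ the link is split. It is implicit in the paper as well. It is supplied by Lemma~\ref{lem2}, not by the presentation of $\pi_1(M)$, which is trivial.
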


The complement $W_C = S^4 \backslash L_C$ is a ribbon homology cobordism from $\partial M$ to $\partial M'$, which allows us to apply the results of \cite{Metatheorem} (in particular, Theorem 1.4) to obstruct a \textit{smooth} splitting of $L_C$ by any homology sphere $Y$ that is \enquote{smaller} than $\partial M$. This leads us to the question of \textit{exotic} links, that is, links of Mazur manifolds that are split topologically but not smoothly.
However, the links $L_C$ are not even topologically split, and we are not able to construct links in $S^4$ which are exotic in this sense (see Question~\ref{Q1}). Instead, we achieve this in definite 4-manifolds.

Given a Mazur manifold $M = M(K)$ and an integer $n$, we denote by $M_n$ the Mazur manifold obtained from $M$ by increasing the framing of $K$ by $n$.  For $n < 0$, we write $\#^n \BC \BP^2$ for $\#^{-n} \overline{\BC \BP ^2}$.

\begin{theorem}\label{thm2} Let $M$, $M'$, and $C$ be as in Theorem~\ref{thm1}. For every $n \neq 0$, there exists a smooth link $$L_{C, n} : M_n \sqcup -M' \hookrightarrow \#^n \BC \BP^2$$ which is topologically but not smoothly split by $S^3$. 
\end{theorem}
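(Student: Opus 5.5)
We will build $L_{C,n}$ from the construction behind Theorem~\ref{thm1}, which we take to be the following. Start from the ribbon concordance $C\subset S^1\times S^2\times I$. Attach a 2-handle along $K\times\{0\}$ (on the $M$ side) and along $K'\times\{1\}$ (on the $M'$ side). Cap with $S^1\times B^3$ on one side so that $M\sqcup -M'$ sits in $S^4$, and let $W_C$ be the complementary ribbon homology cobordism from $\partial M$ to $\partial M'$.

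\textbf{Construction.} To get $L_{C,n}$, change the framing on the $M$ side by $n$. In the Kirby diagram of $S^4=M\cup W_C\cup M'$ we add $|n|$ unknotted $\pm1$-framed 2-handles (blow-ups) linking the 2-handle of $M$ so that its framing shifts by $n$. The ambient manifold becomes $S^4\#^n\BC\BP^2=\#^n\BC\BP^2$, and $M$ is replaced by $M_n$. Concretely, we twist the attaching circle $K$ along a meridian disk of a $\mp1$ blow-up. The new complement $W_{C,n}$ is then $W_C$ with a $\BC\BP^2$-type summand absorbed near $\partial M_n$. We should double-check that $M_n\sqcup -M'$ still embeds disjointly, by tracking the handle moves in Kirby diagrams.

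\textbf{Topological splitting.} The complement $W_{C,n}$ of $L_{C,n}$ is a simply connected cobordism from $\partial M_n$ to $\partial M'$ with intersection form $\langle\pm1\rangle^{|n|}$. Simple connectivity comes from the ribbon structure: $\pi_1$ of the homology spheres surjects. By Freedman, $\partial M_n$ bounds a contractible topological manifold $\Delta$ and so does $\partial M'$. Inside $W_{C,n}\cup M'$, which is a simply connected manifold with boundary $\partial M_n$ and form $n\langle\pm1\rangle$, we aim to find a locally flat $S^3$ separating off $\Delta$-like pieces. Freedman's classification (simply connected, same form, same boundary) identifies $W_{C,n}\cup M'$ with $\Delta\#^n\BC\BP^2$. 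Similarly $M_n\cup W_{C,n}\cong \#^n\BC\BP^2\setminus(\text{contractible})$. To get an actual splitting $S^3$ separating $M_n$ from $M'$, we will show that $M'\subset M'\cup W_{C,n}$ lies in a topological ball inside $\#^n\BC\BP^2\setminus M_n$. For this, decompose $W_{C,n}\cong W'\#^n\BC\BP^2$ topologically, where $W'$ is an $h$-cobordism-like piece from $\partial M_n$ to $\partial M'$. Then let $S^3$ be the sphere of the connected sum on the side containing $M'$ together with the $\BC\BP^2$'s. This last step, making the splitting rigorous (identifying $M'\cup W'$ as a contractible manifold and $M_n\cup$ the rest via Freedman), is the delicate topological step.

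\textbf{Smooth non-splitting.} Suppose a smooth $S^3$ split $L_{C,n}$. Then $\#^n\BC\BP^2=X_1\cup_{S^3}X_2$ with $M_n\subset X_1$ and $M'\subset X_2$, and $X_i$ closed up are definite with diagonal forms. The part of $W_{C,n}$ between $\partial M_n$ and the $S^3$, or the part between $S^3$ and $\partial M'$, gives a definite cobordism. The plan is to reverse the blow-ups to reduce to a ribbon homology cobordism from $\partial M$ (or $\partial M'$) to $S^3$, contradicting Theorem~1.4 of \cite{Metatheorem}: $\partial M$ admits a ribbon cobordism to $\partial M'$, so it cannot cobound a ribbon cobordism to a smaller manifold unless fundamental-group and Floer invariants drop. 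Here we use that $\partial M$ is not $S^3$, since $M$ is Mazur with nontrivial boundary, presumably assumed. Alternatively, use the instanton/Heegaard Floer $d$-invariant and Donaldson diagonalization: a negative definite cobordism from $\partial M'$ to $S^3$ gives an injection of Floer homology, forcing the Floer homology of $\partial M'$ to be trivial. This is the main obstacle, and we would try the route through Theorem~1.4 of \cite{Metatheorem} first.
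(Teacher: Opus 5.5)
Your construction of $L_{C,n}$ (blow up and pass the 2-handle of $M$ once over each exceptional handle) is essentially the paper's. The smooth half of your argument, however, has a genuine gap, and neither route you propose can work.

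Suppose a smooth $S^3$ cut $W_{C,n}$ into $W_1\cup_{S^3}W_2$. That sphere has no reason to respect the handle structure of $W_C$ or to avoid the blow-up region, so neither piece is ribbon. There is also nothing to \enquote{reverse} inside a piece: blowing down never turns the boundary $\partial M_n$ back into $\partial M$. The only ribbon cobordism in sight is $W_C$ itself, so Theorem~1.4 of \cite{Metatheorem} never applies to the pieces. Your fallback principle is false: definiteness alone gives no injectivity on Floer homology, and negative definite cobordisms to $S^3$ exist from many homology spheres that are not $L$-spaces.

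The missing idea is a computation for the specific blow-up cobordism. Write $W_{C,n}=W_n\cup W_C$, where $W_n$, from $\partial M_n$ to $\partial M$, consists of the $|n|$ $(-1)$-framed 2-handles along meridians of the 2-handle of $M_n$.
\begin{itemize}
\item $W_n$ factors into elementary cobordisms $\partial M_k\to\partial M_{k+1}$, each inducing an isomorphism on $HF^+_{\mathrm{red}}$ by the surgery exact triangle argument of \cite{Akbulut-Karakurt}.
\item $F_{W_C}$ is injective on $HF^+_{\mathrm{red}}$ because $W_C$ is ribbon.
\item Hence $F_{W_{C,n}}$ is injective on $HF^+_{\mathrm{red}}(\partial M_n)$, whereas a smooth splitting would factor it through $HF^+_{\mathrm{red}}(S^3)=0$.
\end{itemize}
To reach a contradiction you then need $\partial M_n$ not to be an $L$-space, which is much stronger than $\partial M\neq S^3$. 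This is Lemma~\ref{lem2}, proved using \cite{Conway-Tosun} and Scharlemann's theorem \cite{Scharlemann} to rule out reducible boundaries. Since the isomorphism is for $(-1)$-framed handles, the paper obtains $n>0$ by running the $n<0$ argument for $-C$ and reversing orientations, rather than using $(+1)$-framed handles directly.

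Your topological argument also needs repair. The reason you give for $\pi_1(W_{C,n})=1$ is wrong. Ribbonness gives the opposite, $\pi_1(W_C)\cong\pi_1(\partial M)\neq 1$, which is exactly how Theorem~\ref{thm1} is proved; by the Kervaire--Laudenbach argument in Section~\ref{7}, a ribbon homology cobordism out of $\partial M$ is never simply connected. What kills the fundamental group is that the new 2-handles are attached along a meridian of the 2-handle of $M_n$. A 2-handle along this meridian kills $\pi_1(\partial M_n)$: with framing $0$ it turns $\partial M_n\times I$ into $-M_n$ minus its 3- and 4-handles, and the framing does not affect $\pi_1$. The generators $x_i$ are then killed by the $y_i$ as before. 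Your construction has this feature, so the claim holds, but only for this reason.

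You also never exhibit a sphere separating $\partial M_n$ from $\partial M'$. A sphere splitting off the $\#^n\BC\BP^2$ summands leaves both boundary components on the same side. The clean route is to compare $W_{C,n}$ with the unlink complement $W_0$:
\begin{itemize}
\item delete an arc joining the two boundary components in each;
\item apply Boyer's classification \cite{Boyer1} to the resulting simply connected manifolds, which have connected boundary $(-\partial M_n)\#\partial M'$ and the same intersection form;
\item extend the homeomorphism over the arcs and the two Mazur pieces.
\end{itemize}
This is Proposition~\ref{biglink}; it gives a topological isotopy from $L_{C,n}$ to the unlink, hence a topological splitting.
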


It is evident that the links $L_{C, n}$ possess some kind of exotica, but they also lead to more conventional exotic structures. We discuss several such applications. 

\begin{figure}[htbp]
\centering
\includegraphics[width=0.25\textwidth]{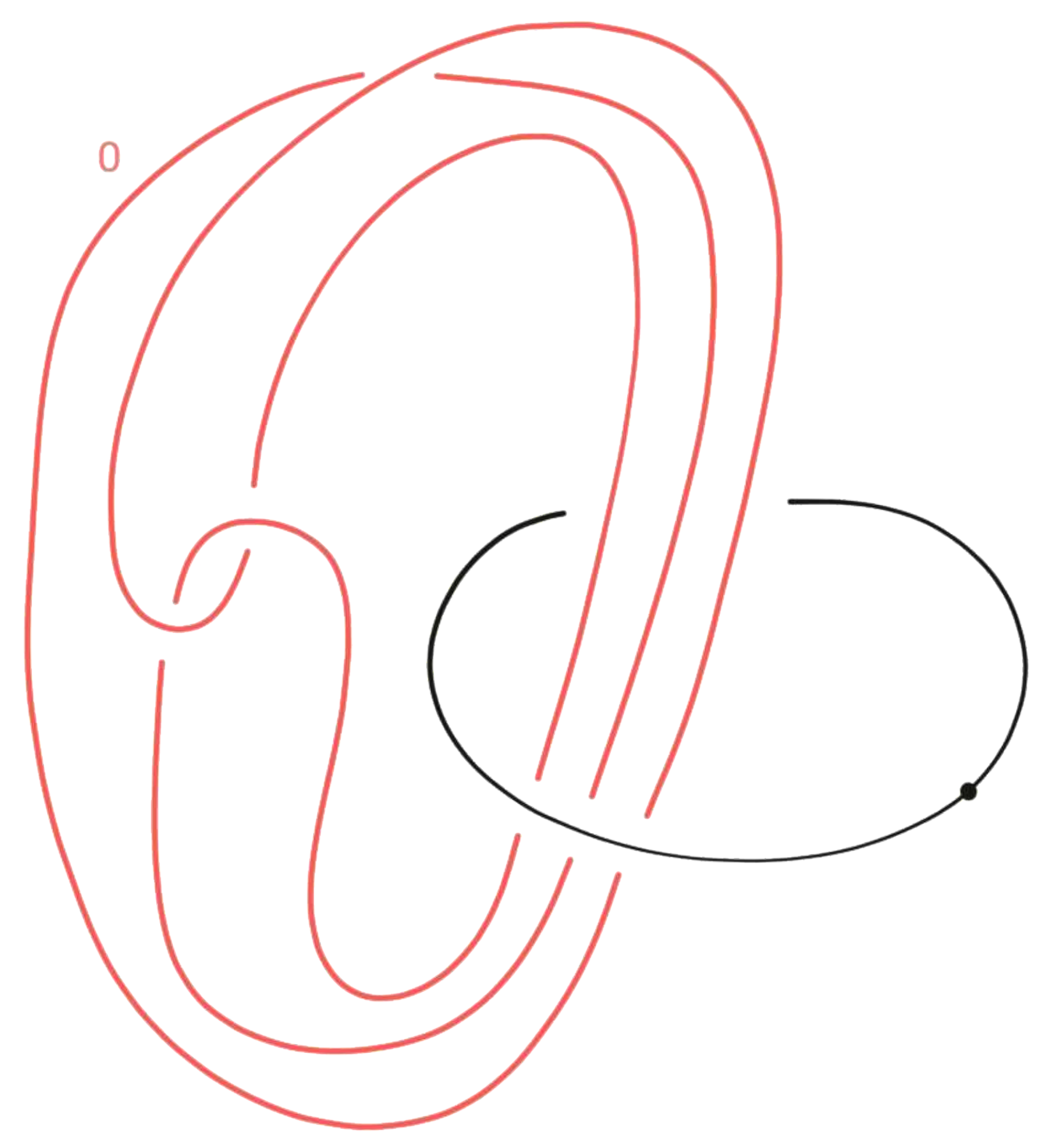} 
\hspace{2.5 cm}
\includegraphics[width=0.25\textwidth]{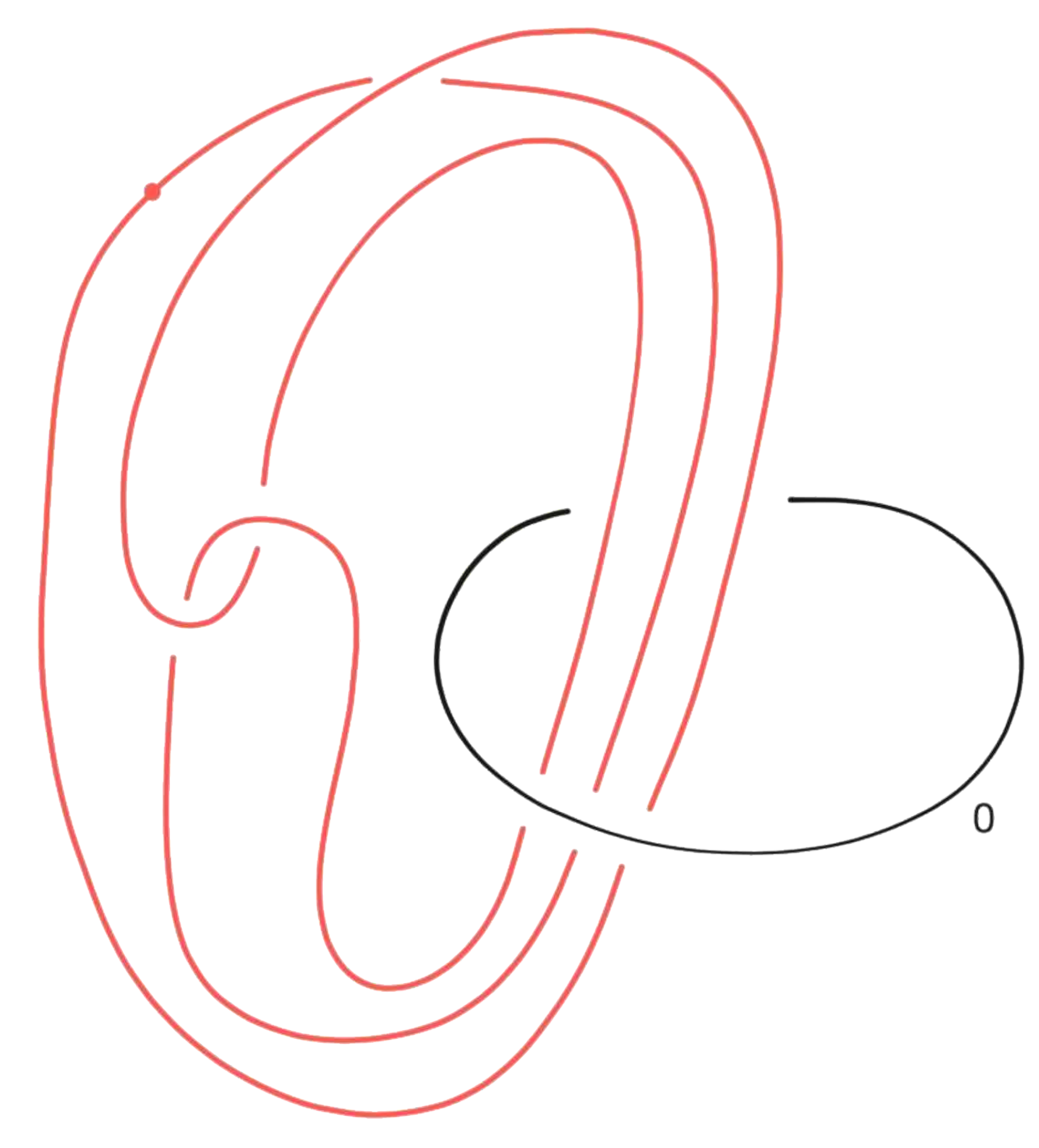}
\caption{Left: an example of a Mazur manifold $M$, this particular one being the Akbulut cork \cite{Akbulut_cork}. Right: the Mazur manifold obtained from $M$ by a zero-dot exchange when $M$ has an unknotted 2-handle.}
\label{fig1}
\end{figure}

\subsection{4-manifolds with boundary}\label{1.1}

We compare $L_{C, n}$ to the \textit{unlink} $$L_0 : M_n \sqcup -M' \hookrightarrow \#^n \BC \BP^2,$$ postponing a definition of an unlink of Mazur manifolds until Section~\ref{2.2}. It will follow from our proof of Theorem~\ref{thm2} that the two links are topologically isotopic. In particular, the complements $$W_{C, n} \coloneq \#^n \BC \BP^2 \backslash L_{C,n},\quad W_0 \coloneq \#^n \BC \BP^2 \backslash L_0$$
must be homeomorphic. On the other hand, they cannot be diffeomorphic since the latter contains a smooth separating $S^3$, while the former does not. We immediately obtain the following.
\begin{cor}\label{cor1}
    The simply connected, definite 4-manifolds with boundary $W_{C, n}$ and $W_0$ form an exotic pair.
\end{cor}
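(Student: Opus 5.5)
We prove Corollary~\ref{cor1} from Theorem~\ref{thm2} together with the fact that $L_{C,n}$ and $L_0$ are topologically isotopic. We establish three things in turn: the complements are simply connected and definite; they are homeomorphic; and they are not diffeomorphic.

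\emph{Homotopy-theoretic properties.} Write $X = \#^n \BC \BP^2$ and $W = X \setminus \nu(M_n \sqcup -M')$ for either link. Since $M_n$ and $M'$ are contractible with homology-sphere boundaries, Mayer--Vietoris gives $H_*(W) \cong H_*(X)$ away from degrees affected by the two boundary components. In particular the intersection form of $W$ equals that of $X$, which is $n\langle 1\rangle$ (or $-n\langle 1\rangle$ for $n<0$), so $W$ is definite. For simple connectivity, I would argue from the handle structure. Turned upside down, each removed Mazur manifold consists of one 2-handle and one 3-handle attached to $W$, plus a 4-handle. Removing a 2-handle and a 3-handle from a simply connected manifold can in principle create $\pi_1$. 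So instead I would read $\pi_1(W)$ off the explicit Kirby diagram of $W_{C,n}$ produced in the proof of Theorem~\ref{thm2}. For $W_0$ this is immediate: $W_0$ is a boundary sum of $X$ minus a ball with the complements of $M_n$ and $-M'$ in $S^4$, and for a Mazur manifold whose dual is built without 1-handles these complements are simply connected. This step, $\pi_1 = 1$, is the main point needing care. I expect it to come out of the construction, since dualizing $M$ in $S^4$ yields a contractible complement.

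\emph{Homeomorphism.} A topological ambient isotopy of $X$ carrying $L_0$ to $L_{C,n}$ restricts to a homeomorphism $W_0 \cong W_{C,n}$. Alternatively, Theorem~\ref{thm2} supplies a topological $S^3$ splitting $L_{C,n}$, and Freedman's classification of simply connected topological 4-manifolds with given boundary and intersection form identifies the two complements directly.

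\emph{Non-diffeomorphism.} $W_0$ contains a smoothly embedded $S^3$ separating its two boundary components $\partial M_n$ and $\partial M'$, namely the splitting sphere of the unlink. Suppose $f\colon W_0 \to W_{C,n}$ were a diffeomorphism. It maps boundary components to boundary components, possibly swapping them; the swap is excluded when $\partial M_n \not\cong \partial M'$, and in any case separation is unaffected. Then $f(S^3)$ is a smooth $S^3$ in $W_{C,n} \subset X$ separating $M_n$ from $-M'$, that is, a smooth splitting of $L_{C,n}$. This contradicts Theorem~\ref{thm2}.

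The one subtlety in this last step is that the splitting sphere of $L_0$ must genuinely separate the two boundary components, and not merely lie in $W_0$. This holds by the definition of the unlink deferred to Section~\ref{2.2}.
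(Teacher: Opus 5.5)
Your proposal is correct and follows the paper's argument. The complements are homeomorphic because $L_{C,n}$ and $L_0$ are topologically isotopic (via Proposition~\ref{biglink}). They are not diffeomorphic because $W_0$ contains a smooth $S^3$ separating its two boundary components, and a diffeomorphism would carry it to a smooth splitting sphere for $L_{C,n}$, which Theorem~\ref{thm2} excludes.

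The step you leave vague, $\pi_1(W_{C,n})=1$, is handled in the paper by exactly the Kirby-diagram reading you point to. The $(-1)$-framed 2-handle attached along the meridian $z_n$ kills $\pi_1(\partial M_n)$, as in the double $M\cup -M$, and each $y_i$ cancels $x_i$ in $\pi_1$.
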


\begin{remark}
    These manifolds have two boundary components, namely $-\partial M_n$ and $\partial M'$. Deleting properly embedded arcs $\gamma_1 \subset W_{C, n}$ and $\gamma_2 \subset W_0$ connecting the two boundaries, we obtain 4-manifolds $W_{C, n}'$ and $W_0'$ with connected boundary $(-\partial M_n) \# \partial M'$. It can be checked that this pair is relatively exotic, meaning there is no diffeomorphism fixing the boundary. Then one can use the strategy of \cite{Akbulut-Ruberman} to obtain absolutely exotic pairs with connected boundary, which remain simply connected and definite.
\end{remark}

Furthermore, the existence of a topological isotopy $L_{C, n} \sim L_0$ also implies that the images of the following two embeddings lie in topological, but not smooth, 4-balls, leading to the corollary below.
$$M_n \hookrightarrow M_n \cup W_{C, n} = \#^n \BC \BP^2 \backslash L_{C, n}(-M'), \quad -M' \hookrightarrow -M' \cup W_{C, n} = \#^n \BC \BP^2 \backslash L_{C, n}(M_n).$$

\begin{cor}\label{cor2}
For every Mazur manifold $M$ and a nonzero integer $n$, there exists a smooth 4-manifold with boundary $Z \overset{\mathrm{htpy}}{\simeq} \#^n \BC \BP^2 \backslash B^4$ and a smooth embedding $\varphi : M \hookrightarrow Z$ such that $\varphi(M) \subset Z$ can be isotoped inside a 4-ball topologically but not smoothly (compare with Proposition~\ref{propC}).
\end{cor}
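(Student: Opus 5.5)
We have to prove Corollary~\ref{cor2}: for every Mazur manifold $M$ and $n\neq 0$ there is a smooth $Z$ homotopy equivalent to $\#^n\BC\BP^2\setminus B^4$ and an embedding $\varphi\colon M\hookrightarrow Z$ whose image can be moved into a $4$-ball topologically but not smoothly. The plan is to feed $M$ into Theorem~\ref{thm2} through a suitable ribbon pair and then delete one component of the resulting link.

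\textbf{Step 1: the ribbon pair.} Write $M=M(K)$. Put $K'=K$ with framing reduced by $n$ and $M'=M(K')$, so that $M'_n=M$. I take the product concordance $C=K'\times I\subset S^1\times S^2\times I$. It has no critical points, so it is (trivially) a ribbon concordance, and it preserves framing. Hence $(M',M')$ is a ribbon pair. Applying Theorem~\ref{thm2} to this pair gives a smooth link
$$L_{C,n}\colon M'_n\sqcup -M'=M\sqcup -M'\hookrightarrow \#^n\BC\BP^2$$
that is split by $S^3$ topologically but not smoothly. If the trivial concordance turns out to make the construction of Theorem~\ref{thm2} degenerate (for instance, if it produces a smoothly split link), I would instead use a nontrivial ribbon concordance from $K'$ to itself or to another knot, e.g.\ one obtained by a band sum with a ribbon knot in a ball. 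The remaining steps need only that $M$ is the first component.

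\textbf{Step 2: defining $Z$ and $\varphi$.} Let $Z=\#^n\BC\BP^2\setminus L_{C,n}(-M')$ and let $\varphi=L_{C,n}|_M$. Since $M'$ is contractible, a regular neighborhood argument shows that removing it has the same homotopy effect as removing a ball. So $Z\simeq \#^n\BC\BP^2\setminus B^4$ by Mayer--Vietoris and van Kampen on $\#^n\BC\BP^2=Z\cup M'$, glued along the homology sphere $\partial M'$, together with Whitehead's theorem.

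\textbf{Step 3: topologically in a ball.} By the proof of Theorem~\ref{thm2}, $L_{C,n}$ is topologically isotopic to the unlink $L_0$. In the unlink, $M$ lies in a ball disjoint from $-M'$. Transporting this ball by the ambient topological isotopy gives a topological ball in $Z$ containing $\varphi(M)$, so $\varphi(M)$ can be isotoped into a ball topologically.

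\textbf{Step 4: not smoothly, the main point.} Suppose $\varphi(M)\subset B$ for a smooth $4$-ball $B\subset Z$, after a smooth isotopy inside $Z$. Then $\partial B$ is a smooth $S^3$ in $\#^n\BC\BP^2$ that separates $M$ from $-M'$. This contradicts the fact that $L_{C,n}$ is not smoothly split, since a smooth isotopy of $M$ inside $Z$ extends to an ambient isotopy fixing $-M'$.

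The only delicate point is checking that "split" in Theorem~\ref{thm2} means exactly "separated by an embedded $S^3$", so that the ball $B$ gives a splitting sphere. I expect this to be immediate from the definitions in Section~\ref{2.2}.
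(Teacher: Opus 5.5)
Your route is the paper's: apply Theorem~\ref{thm2}, delete the $-M'$ component, and read off the topological statement from the topological isotopy $L_{C,n}\sim L_0$ and the smooth statement from the absence of a smooth splitting sphere. Taking the product concordance with $M'=M_{-n}$, so that $M'_n=M$, is a legitimate way to reach an arbitrary $M$, and your hedge is unnecessary. The product concordance is ribbon (the paper itself allows it in Section~\ref{7}), and $W_C=\partial M'\times I$. The smooth obstruction then only needs $\partial M$ not to be an $L$-space, which Lemma~\ref{lem2} gives unless $M\cong B^4$. That case has to be tacitly excluded anyway, since a smoothly embedded $B^4$ is itself a smooth ball. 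Steps~3 and~4 are fine. The factorization argument in the proof of Theorem~\ref{thm2} obstructs any smooth $S^3\subset W_{C,n}$ separating the two boundary components, and the boundary of your pulled-back ball is such a sphere.

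The gap is in Step~2. It is false that deleting a contractible codimension-zero submanifold has the same homotopy effect as deleting a ball. Lickorish's theorem (Theorem~\ref{Lickorish}) produces contractible $X\subset S^4$ whose complement has nontrivial fundamental group, whereas $S^4\setminus B^4$ is contractible. Accordingly, van Kampen applied to $\#^n\BC\BP^2=Z\cup_{\partial M'}M'$ only says that $\pi_1(Z)$ is normally generated by the image of $\pi_1(\partial M')$, not that it vanishes. Mayer--Vietoris does correctly give $H_*(Z)\cong H_*(\#^n\BC\BP^2\setminus B^4)$, but Whitehead's theorem needs $\pi_1(Z)=1$, and that has to come from the construction. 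There are two easy ways to get it:
\begin{itemize}
\item Use the other decomposition $Z=M\cup_{\partial M}W_{C,n}$ together with $\pi_1(W_{C,n})=1$. The blow-up 2-handles are attached along a meridian of the 2-handle of $M$ and kill $\pi_1(\partial M)$, so van Kampen gives $\pi_1(Z)=1$.
\item Note that with the product concordance, $Z$ is literally $M'\#^n\BC\BP^2$, which is plainly homotopy equivalent to $\#^n\BC\BP^2\setminus B^4$ since $M'$ is contractible.
\end{itemize}
With this repair, Hurewicz and Whitehead complete Step~2 and the argument goes through.
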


This exotica disappears when we glue in $M_n$ and $M'$: $$M_n \cup W_{C, n} \cup -M' = \#^n \BC \BP^2 = M_n \cup W_0 \cup -M'.$$ It makes sense to ask whether the exotica persists when we glue in only one of the two Mazur manifolds. Thus, we are interested in whether either
of the following pairs is exotic:
$$(M_n \cup W_{C, n}, M_n \cup W_0), \quad (W_{C, n} \cup -M', W_0\cup -M').$$ 
If that were the case, then $L_{C, n}$ would provide an exotic embedding of $-M'$ or $M_n$, respectively.

We will see that the restriction $L_C|_M$ is actually smoothly \textit{standard} (we define standard embeddings of Mazur manifolds in Section~\ref{2.1}). From this one can conclude that $L_{C, n}|_{M_n}$ is also smoothly standard, which implies
$$W_{C, n} \cup -M' \cong W_0 \cup -M' = -M_n \#^n \BC \BP^2.$$
Although one may expect the restriction $L_C|_{M'}$ to be more interesting because of the zero-dot exchange (introduced in \cite{Akbulut_cork}) involved in the construction of $L_C$, we are generally unable to detect any exotica due to a lack of an appropriate invariant.

However, more can be said when $M$ belongs to a certain subfamily of Mazur manifolds which, is known to exhibit some exotic behavior, as we discuss below.
\subsection{Zero-dot exchange and Stein structures}\label{1.2}
Let $K_0 \subset S^1 \times S^2$ be a 0-framed knot with winding number $1$ that appears unknotted in the surgery diagram of $S^1 \times S^2$. Setting $M_0 = M(K_0)$, the 2-handle of $M_0$ is 0-framed and unknotted, which allows us to perform a zero-dot exchange to obtain the dual Mazur manifold $M^{\bullet}$, with a boundary diffeomorphism $\theta : \partial M_0 \rightarrow \partial M^{\bullet}$ (see Figure~\ref{fig1}). Moreover, there are embeddings of $M_0$ and $M^{\bullet}$ into $S^4$ coming from the following decomposition:
$$S^4 = M_0 \cup_{\theta} -M^{\bullet}.$$

For such a Mazur manifold $M_0 = M(K_0)$, let $\on{tb}(M_0)$ denote the maximal Thurston--Bennequin invariant (see \cite{Etnyre}) of a Legendrian realization of $K_0$ in the standard contact structure of $S^1 \times S^2$. It follows from the Eliashberg criterion \cite[Theorem~11.2.2]{Gompf-Stipsicz} that $M_0$ admits a Stein structure if $\on{tb}(M_0) > 0$. In that case, it can be shown (see \cite{Akbulut-Yasui}) that the boundary map $\theta : \partial M_0 \rightarrow \partial M^{\bullet}$ does not extend to a diffeomorphism $\Theta : M_0 \xrightarrow{\cong} M^{\bullet}$. On the other hand, $\theta$ must always extend to a homeomorphism $M_0 \approx M^{\bullet}$ by \cite{Freedman}. It follows that when $\on{tb}(M_0) > 0$, the embeddings coming from the zero-dot exchange are exotic (see the end of Section~\ref{2.1}).\\

The motivation for considering such manifolds $M_0$ is that we can apply the replacement $M_0 \rightsquigarrow_{\theta} M^{\bullet}$ (delete $M_0$ and glue in $M^{\bullet}$ using $\theta$) to the links $L_C$ in the following way.

Let $C_0$ be a ribbon concordance from $K_0$ to some knot $K'$, set $M' = M(K')$ as before, and let
$$L_{C_0} : M_0 \sqcup -M' \hookrightarrow S^4$$
be the link provided by Theorem~\ref{thm1}. Replacing $M_0$ by $M^{\bullet}$ via the map $\theta$, we obtain a link
$$L_{C_0}^{\bullet} : M^{\bullet} \sqcup -M' \hookrightarrow S^4,$$
where the ambient space remains $S^4$ because we replace the decomposition $M_0 \cup_{\Id} -M_0$ by $M^{\bullet} \cup_{\theta} -M_0$.

Applying the construction of Theorem~\ref{thm2} to $L_{C_0}^{\bullet}$, we obtain links $$L_{C_n}^{\bullet} : M^{\bullet}_n \sqcup -M' \hookrightarrow \#^n \BC \BP^2,$$ where we write $L_{C_n}^{\bullet}$ instead of $L_{C_0, n}^{\bullet}$ because these links are not obtained from $L_{C_0, n}$.

Just like in the case of $L_{C, n}$, it will follow that $L_{C_n}^{\bullet}$ is split by a topological $S^3$, but not by a smooth one. However, while the restriction $L_{C, n}|_{M_n}$ is smoothly standard because $L_C|_M$ is standard, the restriction $L_{C_n}^{\bullet}|_{M^{\bullet}_n}$ can be more interesting when $L_{C_0}^{\bullet}|_{M^{\bullet}}$ is smoothly nonstandard, i.e., $\theta : \partial M_0 \rightarrow \partial M^{\bullet}$ does not extend. While this is a necessary condition, it may not be sufficient because the exotica of $\theta$, and hence of $L_{C_0}^{\bullet}$, may disappear when we \enquote{blow up} to obtain the links $L_{C_n}^{\bullet}$.
One way to ensure that the exotica survives the blow-up is to consider Stein structures.

\begin{theorem}\label{thm3}
Let $M_0$, $M'$, and $C_0$ be as above and assume that $\on{tb}(-M_0) > 0$. Then, for $n < 0$, the complement $\#^n \BC \BP^2 \backslash L_{C_n}^{\bullet}(M_n^{\bullet})$ admits a Stein structure. The complement of the standard embedding $M_n^{\bullet} \hookrightarrow \#^n \BC \BP^2$ is $-M_n^{\bullet} \#^n \BC \BP^2$, which is not Stein. Thus, the two complements form an exotic pair of 4-manifolds with boundary. Moreover, this pair is related by a replacement $M' \rightsquigarrow S^4 \backslash L_{C_0}^{\bullet}(-M')$.
\end{theorem}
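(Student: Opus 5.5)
The plan is to get an explicit Kirby diagram of the complement $X \coloneq \#^n \BC \BP^2 \backslash L_{C_n}^{\bullet}(M_n^{\bullet})$, put it in Legendrian form, and check the Eliashberg criterion (\cite[Theorem~11.2.2]{Gompf-Stipsicz}). Then we compare with the standard complement.

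First I will recall how $L_C$ is built in the proof of Theorem~\ref{thm1}. The complement $S^4 \backslash L_C(M)$ is $-M$, obtained by turning the Mazur decomposition upside down. Inside it, $L_C(-M')$ is cut out by the ribbon concordance $C$: only 1- and 2-handles are added along the ribbon moves. Under the replacement $M_0 \rightsquigarrow_\theta M^{\bullet}$, the complement of $L_{C_0}^{\bullet}(M^{\bullet})$ becomes $-M_0$, glued via $\theta$. For $n<0$, the construction of Theorem~\ref{thm2} changes the framing of the Mazur 2-handle by $n$ and adds $|n|$ copies of $\overline{\BC\BP^2}$. I expect this to show that $X$ is $-M_0$ with its 2-handle framing shifted by $-n>0$ relative to the dual picture, together with $-n$ extra $(-1)$-framed 2-handles attached along unknots that link the Mazur 2-handle geometrically once (the blow-ups). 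Equivalently, $X$ is obtained from $-M_0 \natural(\natural^{-n}\text{blow-up pieces})$ by handle slides. I would write this diagram explicitly, with the 1-handle in dotted-circle or Gompf standard form.

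The Stein check is next. By hypothesis $\on{tb}(-M_0)>0$, so the 2-handle of $-M_0$ has a Legendrian realization $\Lambda$ with $\on{tb}(\Lambda)\ge 1 > 0 = $ framing. In $X$, each $(-1)$-framed meridional unknot can be drawn as a Legendrian unknot with $\on{tb}=-1$, so its framing $-1 = \on{tb}-1$ is fine. The key point is that the blow-ups link $\Lambda$, which changes its framing. I will slide or blow down so that the Mazur 2-handle framing stays at most $\on{tb}(\Lambda)-1$. If the framing shift goes the wrong way, I will stabilize the meridians or add Legendrian twists: each linked $\overline{\BC\BP^2}$ meridian lowers the relevant framing, and that is exactly why $n<0$ is required. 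This is the step I expect to be the main obstacle. It needs the diagram from the previous step to be correct and the framings to be tracked carefully through the blow-ups and through $\theta$. I would try to work it out first for the Akbulut cork to catch sign errors.

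For the standard embedding, the complement is $-M_n^{\bullet}\#^n\BC\BP^2$ with $n<0$. It contains an embedded sphere of square $-1$ from the $\overline{\BC\BP^2}$ summand, split off by a separating $S^3$. A Stein domain cannot contain a smoothly embedded $(-1)$-sphere: otherwise it would be a nonminimal Stein filling, contradicting the adjunction inequality for Stein surfaces. So this complement is not Stein. Both complements are homeomorphic, because the two embeddings are topologically isotopic; this follows as in Theorem~\ref{thm2} from Freedman and the topological isotopy $L_{C_n}^{\bullet}\sim L_0$. Hence they form an exotic pair. Finally, the decomposition $X = (S^4\backslash L_{C_0}^{\bullet}(-M'))$-part $\cup$ (blow-up) shows that $X$ and the standard complement differ by replacing the copy of $M'$ with $S^4 \backslash L_{C_0}^{\bullet}(-M')$, which has the same boundary. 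This gives the last claim.
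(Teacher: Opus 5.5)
Your first step is where the proposal breaks, and as stated it is fatal. You predict that $X=\#^n\BC\BP^2\backslash L^{\bullet}_{C_n}(M^{\bullet}_n)$ is $-M_0$, possibly with a shifted framing, plus $|n|$ $(-1)$-framed unknots each linking the Mazur 2-handle geometrically once. You also call this ``equivalently'' $-M_0\natural(\natural^{|n|}\text{blow-up pieces})$ up to slides. Any manifold of that shape is of the form $(-M_0)_f\#^{|n|}\overline{\BC\BP^2}$: blow down the meridians, or undo the slides. It therefore contains a $(-1)$-sphere, so by your own argument in the third paragraph it is not Stein.

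That shape is in fact the shape of the \emph{standard} complement. After sliding, $X'=-M^{\bullet}_n\#^n\BC\BP^2$ is $-M^{\bullet}$ with $|n|$ $(-1)$-framed meridians of its 2-handle. Your Eliashberg check also fails on its own terms. A Legendrian unknot in $S^3$ has $\on{tb}\le -1$, so a $(-1)$-framed curve that does not run over a 1-handle never has framing $\on{tb}-1$; you wrote $-1=\on{tb}-1$ with $\on{tb}=-1$. Stabilizations or twists only lower $\on{tb}$, so they cannot fix this.

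What is missing is the computation showing that the $(-1)$-curves sit on the \emph{1-handle}. Write $X=-M'\cup W_{C_0}\cup W^{\bullet}_n=-M_0\cup W^{\bullet}_n$, using $S^4\backslash L_{C_0}(M_0)=-M'\cup W_{C_0}=-M_0$. Then follow the belt circles of the $|n|$ blow-up 2-handles through the gluing map $\partial_+W^{\bullet}_n\to\partial M^{\bullet}\xrightarrow{\theta^{-1}}\partial M_0$:
\begin{itemize}
\item The slam-dunks identifying $\partial(M^{\bullet}\#^n\BC\BP^2)$ with $\partial M^{\bullet}$ send these belt circles to framed meridians of the 2-handle $K^{\bullet}$ of $M^{\bullet}$.
\item The zero-dot exchange $\theta^{-1}$ turns meridians of $K^{\bullet}$ into meridians of the dotted circle of $M_0$.
\item Keeping track of orientations, $X$ is the diagram of $-M_0$, with its 2-handle still $0$-framed, plus $|n|$ $(-1)$-framed meridians of the dotted circle.
\end{itemize}
In Gompf standard form each such curve runs once over the 1-handle with no cusps or crossings, so $\on{tb}=0$ and $-1=\on{tb}-1$. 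Stabilizing $-K_0$ down to $\on{tb}=1$ gives framing $0=\on{tb}-1$, so the criterion holds. This, not a framing shift, is where $n<0$ enters. Thus $X$ and $X'$ differ exactly by the zero-dot exchange $-M_0\leftrightarrow-M^{\bullet}$, which moves the $(-1)$-curves between the 1-handle and the 2-handle.

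The last claim also needs more than the decomposition. Set $N'=M^{\bullet}\cup W_{C_0}$; replacing $-M'$ by $-N'$ gives $-N'\cup W_{C_0}\cup W^{\bullet}_n$. Identifying this with $-M^{\bullet}\cup W^{\bullet}_n=X'$ requires $M^{\bullet}\cup W_{C_0}\cup -W_{C_0}\cong M^{\bullet}$ relative to the boundary. That is the Kirby-calculus content of Lemma~\ref{lemD}.
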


Although we obtain exotic pairs of simply connected, definite 4-manifolds with connected boundary, it is likely that many of these were known to the experts. Indeed, Akbulut mentions these pairs for the case when $M_0$ is the cork from \cite{Akbulut_cork} in Remark 1 of \cite{Akbulut_definite}. But while the general case is no more complicated, it does not appear in the literature, to the best of our knowledge. Moreover, these pairs arise naturally from our links $L_{C_n}^{\bullet}$, and can be closed off to obtain $\#^n \BC \BP^2$. Finally, the exotic pairs only depend on $M_0$ and $n$, but they are related by a replacement of $M'$, which can be taken to be any Mazur manifold that is ribbon with respect to $M_0$. 

Another example of exotic pairs of definite 4-manifolds with boundary appears in recent work of Cavallo \cite[Theorem~1.4]{Cavallo}. Inspired by their Corollary 1.5, we obtain the following:

\begin{cor}\label{cor3}
For every $n \neq 0$ and $M_0$ satisfying $\on{tb}(-M_0) > 0$, there exist two smooth embeddings $f, g : M_n^{\bullet} \hookrightarrow \#^n \BC \BP^2$ such that no diffeomorphism of $\#^n \BC \BP^2$ maps $f(\partial M_n^{\bullet})$ to $g(\partial M_n^{\bullet})$.
\end{cor}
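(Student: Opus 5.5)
The plan is to compare two embeddings of $M_n^{\bullet}$: the link restriction $f = L_{C_n}^{\bullet}|_{M_n^{\bullet}}$ and the standard embedding $g$. We then show that a diffeomorphism carrying $f(\partial M_n^{\bullet})$ to $g(\partial M_n^{\bullet})$ would yield a diffeomorphism between the two complements of Theorem~\ref{thm3}. Since one complement is Stein and the other is not, no such diffeomorphism can exist.

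First take $n<0$. The key step is to control which side of the image of $\partial M_n^{\bullet}$ is which. Suppose $\Phi$ is a diffeomorphism of $\#^n \BC \BP^2$ with $\Phi(f(\partial M_n^{\bullet})) = g(\partial M_n^{\bullet})$. The hypersurface $f(\partial M_n^{\bullet})$ separates $\#^n \BC \BP^2$ into two pieces.
\begin{enumerate}
\item One piece is $f(M_n^{\bullet})$, which is contractible.
\item The other is the complement $X_f$, which has $H_2$ of rank $|n|\neq 0$.
\end{enumerate}
The same holds for $g$ with complement $X_g$. Since $\Phi$ is a homeomorphism, it cannot send the contractible side to the side with nontrivial homology, so it carries $X_f$ onto $X_g$. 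This gives a diffeomorphism between $\#^n \BC \BP^2 \backslash L_{C_n}^{\bullet}(M_n^{\bullet})$ and $-M_n^{\bullet}\#^n \BC \BP^2$, contradicting Theorem~\ref{thm3}. Orientation causes no trouble: if $\Phi$ reversed orientation it would have to act as $-1$ on the intersection form, but a negative definite form cannot be sent to its negative (rank $\geq 1$). So $\Phi$ preserves orientation, and Stein-ness (for the given orientation) is a diffeomorphism invariant.

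For $n>0$, reverse orientations. An embedding $M_n^{\bullet}\hookrightarrow \#^n\BC\BP^2$ is the same as an embedding $-M_n^{\bullet}\hookrightarrow \#^{-n}\BC\BP^2$. I would identify $-M_n^{\bullet}$ with a manifold of the form $(M'')^{\bullet}_{-n}$, where $M''$ is the mirror Mazur manifold: mirroring negates framings, and the zero-dot exchange commutes with mirroring. The hypothesis $\on{tb}(-M_0)>0$ must then be transferred to the mirrored data. If it does not transfer directly, the fallback is to use Corollary~\ref{cor1}-style arguments via $L_{C,n}$ for the positive case. I expect this orientation bookkeeping for $n>0$ to be the main obstacle, because the Stein hypothesis in Theorem~\ref{thm3} is only stated for $n<0$, and the matching of the $\on{tb}$ condition under mirroring must be checked carefully.
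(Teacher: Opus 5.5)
Your treatment of $n<0$ is correct and is the paper's argument; the paper only swaps the labels $f,g$. You also justify two points the paper leaves implicit. First, $\Phi$ must send complement to complement, because one side of the hypersurface is contractible and the other has $H_2$ of rank $|n|$. Second, $\Phi$ preserves orientation, because a nonzero definite form is not isometric to its negative.

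The gap is the case $n>0$, which you leave open, and your proposed fallback cannot close it. Corollary~\ref{cor1} only distinguishes complements of the two-component links $L_{C,n}$ and $L_0$. The paper also notes that $L_{C,n}|_{M_n}$ is smoothly standard. So no argument of that type yields two inequivalent embeddings of a single Mazur manifold, let alone of $M^\bullet_n$.

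The paper handles $n>0$ with your first idea, applied without re-examining the hypothesis: take the $n<0$ embeddings and reverse the orientation of everything. A diffeomorphism of $\#^{-n}\BC\BP^2$ is the same map as a diffeomorphism of $\#^{n}\BC\BP^2$, so the conclusion is unchanged. The complements become $-X$ and $-X'$, which are not orientation-preservingly diffeomorphic because $X$ and $X'$ are not.

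Your worry about the bookkeeping is nevertheless well founded. For $n'=-n>0$ this construction embeds $-M^\bullet_n=(-M^\bullet)_{n'}=((-M_0)^\bullet)_{n'}$ into $\#^{n'}\BC\BP^2$. Read as a statement about $M^\bullet_{n'}$, it needs Theorem~\ref{thm3} applied to $-M_0$, i.e.\ $\on{tb}(M_0)>0$. The hypothesis $\on{tb}(-M_0)>0$ does not transfer, and the paper's one-line proof passes over this point.

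To finish, commit to the orientation reversal and state what it actually gives for $n>0$. That is the mirrored manifold $(-M^\bullet)_{n'}$, or equivalently the literal statement under $\on{tb}(M_0)>0$. Drop the appeal to Corollary~\ref{cor1}.
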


\subsection{Mazur knots}\label{1.3}

Earlier, we observed that when the boundary map $\theta : \partial M_0 \rightarrow \partial M^{\bullet}$ does not extend to a diffeomorphism $\Theta : M_0 \xrightarrow{\cong} M^{\bullet}$, we obtain exotic embeddings of $M_0$ and $M^{\bullet}$ in $S^4$.

We are also interested in general Mazur manifolds that admit exotic embeddings in $S^4$. Assume $M$ is Mazur and $\varphi : M \hookrightarrow S^4$ is a smooth embedding that is topologically but not smoothly isotopic to the \textit{standard} embedding $\varphi_M : M \hookrightarrow S^4$. According to Proposition~\ref{propEx}, this is equivalent to requiring that the complement $N \coloneq -S^4 \backslash \varphi(M)$ is homeomorphic to $M$, but the boundary map (which we continue to denote $\theta$) identifying $\partial M$ and $\partial N$ does not extend to a diffeomorphism $\Theta : M \xrightarrow{\cong} N$. In addition, we assume that $N$ is an AC manifold as defined in \cite{Melvin-Schwartz}.

We prove the following result, which can be interpreted as saying that exotica travels across a ribbon homology cobordism (granted, these cobordisms are somewhat special; see the discussion in Section~\ref{7}).

\begin{theorem}\label{thm4}
For every Mazur manifold $M'$ that is ribbon with respect to $M$, there exists an exotic embedding $\varphi' : M' \hookrightarrow S^4$. In other words, setting $N' \coloneq -S^4 \backslash \varphi'(M')$ and letting $\theta' : \partial M' \rightarrow \partial N'$ be the induced boundary diffeomorphism, $\theta'$ does not extend to a diffeomorphism $\Theta' : M' \xrightarrow{\cong} N'$.
\end{theorem}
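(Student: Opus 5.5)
We build $\varphi'$ by gluing the ribbon homology cobordism onto the exotic complement $N$, and we get the obstruction by running the non-extension argument backwards through the ribbon structure.

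\textbf{Construction.} Let $C \subset S^1\times S^2\times I$ be the ribbon concordance from $K$ to $K'$, where $M=M(K)$ and $M'=M(K')$. Attaching $(S^1\times B^3\times I)\cup(\text{2-handle along } C)$ to $M$ along $\partial M$ gives a decomposition
$$M = M' \cup W, \qquad W=M\setminus \mathrm{int}(M').$$
Here $W$ is a homology cobordism from $\partial M'$ to $\partial M$. Since $C$ is ribbon, $W$ is built from $\partial M'\times I$ using only 1- and 2-handles, so it is a ribbon homology cobordism in the sense of \cite{Metatheorem}. This is the same cobordism as in Theorem~\ref{thm1}: $M'$ sits in $M$ through $C$, and $L_C|_M$ is standard.

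Define $\varphi'$ as the restriction $\varphi|_{M'}$ of the given exotic embedding. Then
$$N' = -S^4\setminus \varphi'(M') = N\cup_\theta -W.$$
The map $\theta'$ is $\theta$ restricted to the inner end. Topologically, $\varphi$ is isotopic to $\varphi_M$, and $\varphi_M|_{M'}$ is standard (as in the proof of Theorem~\ref{thm1}). Hence $\varphi'$ is topologically isotopic to the standard embedding, and by Proposition~\ref{propEx} the space $N'$ is homeomorphic to $M'$ rel the identification $\theta'$.

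\textbf{Obstruction.} Suppose for contradiction that $\theta'$ extends to a diffeomorphism $\Theta' : M'\to N'$. Reversing $C$ gives a concordance from $K'$ to $K$, and its complement $-W$ is built with 2- and 3-handles. Gluing $W$ onto the $\partial M'$ side of both $M'$ and $N'$ yields a diffeomorphism
$$M'\cup W \;\cong\; N'\cup W = N\cup(-W\cup W).$$
The piece $-W\cup W$ is the double of a ribbon homology cobordism. Because $C$ is a ribbon concordance, $-W\cup W$ is diffeomorphic to $\partial M\times I$ with some 2-handle/3-handle pairs attached. This comes from the standard fact (Gordon's lemma and its 4-dimensional version) that $C$ followed by its reverse is isotopic to $K\times I$ with trivial 2-knots tubed in. So we get $M\cong N\#^k(S^2\times S^2)$, or at worst $M\#^k(S^2\times S^2)\cong N\#^k(S^2\times S^2)$ with the diffeomorphism extending $\theta$.

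On its own this only says the exotica dies after stabilization, which is not yet a contradiction. This is exactly where the AC hypothesis on $N$ is used. Following \cite{Melvin-Schwartz}, an AC manifold contains no "fake" stabilizations absorbing handles in this way: the 3-handles of $-W$ cancel the 2-handles of $W$ inside the embedded copy of $M'\cup W = M$. The reason is that the extra 2-spheres come from trivial 2-knots in $S^4$ and bound 3-balls in the complement. The plan is therefore to show that the 2-/3-handle pairs cancel geometrically, not merely up to stabilization. We do this by tracking the attaching spheres of the 3-handles through $\Theta'$ and using that in $N$ (AC) they still bound the corresponding balls. This yields a diffeomorphism $M\cong N$ extending $\theta$, contradicting the assumption on $\varphi$.

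\textbf{Main obstacle.} The hardest step is the last one: passing from a diffeomorphism after gluing the doubled ribbon cobordism to an honest diffeomorphism $M\to N$ extending $\theta$. The intended approach is to treat the double $-W\cup W$ as a product plus cancelling pairs, which holds smoothly for ribbon concordances, and then to invoke the AC property to ensure the cancelling 3-handles survive the identification $\Theta'$. Should that fail, the fallback is an invariant argument: use Theorem~1.4 of \cite{Metatheorem} to show that a relative invariant distinguishing $(M,\theta)$ from $(N,\theta)$ injects across $W$, so it also distinguishes $(M',\theta')$ from $(N',\theta')$.
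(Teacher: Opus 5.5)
You set things up the same way the paper does. $\varphi'$ is the restriction of $\varphi$ to $M'\subset M=M'\cup -W_C$ (equivalently $L_{C,\theta}|_{M'}$), so $N'=N\cup_\theta W_C$. If $\Theta'$ extended $\theta'$, gluing $-W_C$ back on would give $M\cong N\cup_\theta W_C\cup -W_C$ rel boundary. The theorem then comes down to the identity $N\cup_\theta W_C\cup -W_C\cong N$ via a diffeomorphism restricting to $\theta$ (Lemma~\ref{lemD}). Your proposal does not establish this identity, and the route you sketch fails at several points:
\begin{itemize}
\item The description of $\bar C\circ C$ as $K\times I$ tubed to unknotted spheres does not make it isotopic to $K\times I$. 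The tubes may run through the $3$-balls the spheres bound, and the composite can genuinely be knotted. So this gives no product structure on $W_C\cup -W_C$.
\item The conclusion ``$M\cong N\#^k(S^2\times S^2)$'' is impossible. $N\cup W_C\cup -W_C$ is a contractible manifold glued to a homology cobordism along a homology sphere, so $H_2=0$. No stabilization ever appears, so the appeal to AC manifolds having ``no fake stabilizations'' has nothing to act on, and it is not an available result.
\item Tracking $3$-handles through $\Theta'$ is beside the point, since the identity you need does not involve $\Theta'$.
\item Your fallback has the same hole. The hypothesis gives no relative invariant. Comparing invariants across $W_C$ would in any case require $N\cong N'\cup -W_C$ or $M'\cong M\cup W_C$ rel boundary, which is again the missing identification.
\end{itemize}

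The missing idea is to combine the $2$-handles of $-W_C$ with $\pi_1(N)=1$. In a diagram of $N\cup W_C\cup -W_C$ these $2$-handles are $0$-framed meridians $z_i$ of the $2$-handles $y_i'$ of $W_C$. Here $y_i'$ is $\theta(\mu_K)$ banded along the ribbon band $\gamma_i'$ to a meridian of the new $1$-handle $x_i'$. The $z_i$ let you modify $y_i'$ by homotopy:
\begin{itemize}
\item sliding $y_i'$ over and back under $z_i$ realizes arbitrary self-crossing changes of $y_i'$;
\item sliding the $2$-handles $b_j$ of $N$ over $z_i$ realizes crossing changes between them and $y_i'$.
\end{itemize}
First slide $y_i'$ over the $b_j$ (using the AC presentation) until the word that $\theta(\mu_K)$ spells in the $1$-handles $a_j$ of $N$ is freely trivial. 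The moves above then unlink the $\theta(\mu_K)$ part of $y_i'$ from $N$ entirely. Next $y_i'$ contracts across $\gamma_i'$ to a meridian of $x_i'$, the pairs $(x_i',y_i')$ cancel, and the $z_i$ cancel the $3$-handles. What remains is $N$ with boundary identification $\theta$. Equivalently, $N\cup W_C\cup -W_C$ is surgery on $N\#^m(S^1\times S^3)$ along circles which, once $\theta(\mu_K)$ is null-homotopic in $N$, are freely homotopic and hence isotopic to the $S^1$-factors. The essential input is that $\theta(\mu_K)$ dies in $\pi_1(N)$. That is why the argument must be run with $N$ attached rather than on $W_C\cup -W_C$ alone, where $\mu_K\neq 1$ in $\pi_1(\partial M)$.
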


Taking a link $L_C : M \sqcup  -M' \hookrightarrow S^4$ from Theorem~\ref{thm1}, we produce a link $L_{C, \theta} : N \sqcup -M' \hookrightarrow S^4$ by performing a replacement $M \rightsquigarrow_{\theta} N$. The assumption that $\theta$ does not extend readily implies that the embedding $L_{C, \theta}|_N$ is exotic. We prove that the embedding $\varphi' \coloneq L_{C, \theta}|_{M'}$ is also exotic by showing that the replacement $M' \rightsquigarrow_{\theta'} N'$ changes the induced decomposition of $S^4$ as follows:
$$S^4 = N \cup_{\theta} W_C \cup -M' = N \cup_{\theta} -M \rightsquigarrow S^4 = N \cup_{\theta} W_C \cup_{\Id} -W_C \cup_{\theta} -N = N \cup_{\Id} -N.$$

Here we do not show that $W_C \cup -W_C = \partial N \times I$, but only that $N \cup W_C \cup -W_C = N$, making explicit use of the AC structure of $N$.

\begin{remark}\label{rem1}
When the manifolds $M$ and $N$ are not diffeomorphic at all, $\theta$ certainly does not extend to a diffeomorphism $\Theta$. For example, one can choose $M$ and $N$ to be the exotic pairs of Mazur manifolds $W_n, W_n'$ from \cite{menagerie}, which can be easily checked to glue to $S^4$. It is likely that the exotic pairs from \cite{Gheehyun} also glue to $S^4$, but we have not verified that.
However, it may also happen that there is some diffeomorphism $M \cong N$, but it does not restrict to $\theta$ on the boundary. In other words, we only require that the pair $(M, N)$ be exotic relative to $\theta$. Such examples include the Akbulut cork \cite{Akbulut_cork} and the generalized family of corks from \cite{Akbulut-Yasui}.
\end{remark}

\begin{remark}
Note that the pair $(M', N')$ is a \textit{multicork} in the language of \cite{Melvin-Schwartz}. Since we have $M' \cup_{\theta'} -N' = S^4$, it is a \textit{simple} multicork. Finally, since $\theta'$ does not extend to a diffeomorphism $\Theta'$, we are justified in calling it a \textit{nontrivial} multicork (see \cite{Equivariant} for a definition of nontrivial corks).    
\end{remark}

\textbf{Conventions.} Given an oriented manifold $X$, we will denote by $-X$ the same manifold endowed with the opposite orientation. All maps will be orientation-preserving unless specified otherwise. As usual, we denote a homotopy equivalence by $\simeq$, a homeomorphism by $\approx$, and a diffeomorphism, as well as a group isomorphism, by $\cong$. The more canonical equivalences will be denoted by $=$.

All pictures will represent examples, rather than rigorous expositions. In light of that, we will always draw the Akbulut cork \cite{Akbulut_cork} in place of a general Mazur manifold for simplicity. All 1-handles are oriented counterclockwise, while the orientation of the 2-handles depends on the context.

The term \textit{exotic} will always refer to structures and maps that are standard topologically but not smoothly. Since we discuss different types of exotica, the exact meaning of the term will depend on the context. The 4-manifolds $S^4$ and $\#^n \BC \BP^2$ will be taken with their standard smooth structure. \\

\textbf{Organization.} In Section~\ref{2} we discuss what is known about codimension-zero embeddings in 4D. In Section~\ref{3} we describe our construction of the link $L_C$ and prove Theorem~\ref{thm1}. In Section~\ref{4} we construct the links $L_{C, n}$ and prove Theorem~\ref{thm2}. In Sections~\ref{5} and~\ref{6} we prove Theorems~\ref{thm3} and~\ref{thm4}, respectively. Finally, in Section~\ref{7} we discuss further questions along with some limitations.\\

\textbf{Acknowledgments.} We thank Francesco Lin for his continuous help and support, as well as for suggesting Question~\ref{Q}, which led to this project. We thank Tye Lidman and Marco Marengon for very insightful discussions. We also thank Maya Chande, Anthony Conway, Kyle Hayden, Juan Muñoz-Echániz, Mark Powell, and Bülent Tosun for helpful comments. The author was partially supported by NSF grant DMS-2503714.

\section{Knotting contractible 4-manifolds in dimension four}\label{2}
In this section, we discuss smooth, contractible 4-manifolds and their embeddings in $S^4$ as well as larger 4-manifolds. An arbitrary contractible 4-manifold is not easy to work with, so we usually restrict our attention to those with particularly simple handle structures.

Let $\C$ denote the family of all smooth, compact, oriented, contractible 4-manifolds. Let $\A \subset \C$ denote the subfamily of contractible AC manifolds as defined in \cite{Melvin-Schwartz}. A manifold $A \in \A$ is built by attaching $k$ 1-handles $x_i$ to $B^4$, followed by $k$ 2-handles $r_i$ attached in a way that results in the standard presentation of the trivial group: 
$$\pi_1(A) = \langle x_1, x_2, \dots, x_k \mid r_1, r_2, \dots, r_k \rangle = 1, \quad r_i = x_i.$$

The acronym AC refers to the Andrews--Curtis moves on group presentations \cite{AC}; this is justified by the following observation. Let $X$ be a contractible 4-manifold built from $k$ 1-handles $x_i$ and $k$ 2-handles $y_i$ so that the induced presentation of the trivial group
$$1 = \pi_1(X) = \langle x_1, x_2, \dots, x_k \mid y_1, y_2, \dots, y_k \rangle$$
is Andrews--Curtis equivalent to the trivial presentation. Then the Andrews--Curtis moves provide a guide on how to slide the 2-handles $y_i$ so as to obtain new 2-handles $r_i$, which induce the standard presentation of the trivial group. This implies that $X \in \A$. 

One may enlarge $\A$ to a family $\B \subset \C$ consisting of contractible manifolds given by an equal number of 1- and 2-handles, without any assumption on the presentation of their trivial fundamental group. If the Andrews--Curtis conjecture \cite{AC} were true, we would have $\A = \B$. 

There is a natural filtration on $\A$:
$$\A^1 \subset \A^2 \subset \cdots \subset \A^n \subset \cdots,$$
where $\A^n$ is the subfamily admitting a handle decomposition with at most $n$ pairs of 1- and 2-handles.

We are especially interested in the contractible 4-manifolds in the lowest filtration level. In accordance with the literature \cite{AkbulutKirby}, we call these manifolds \textit{Mazur} and denote $\M \coloneq \A^1$. By calling an abstract 4-manifold Mazur, we implicitly assume it is equipped with a handle decomposition that has a single 2-handle.
As was stated in the introduction, such manifolds $M \in \M$ are uniquely determined by their 2-handle, which is given by a framed knot $K \subset S^1 \times S^2$ with winding number $1$, allowing us to write $M = M(K)$. 

We consider a further specialization by letting $\M_0 \subset \M$ denote the subfamily of those Mazur manifolds whose 2-handles are 0-framed and unknotted. Recall that given $M_0 \in \M_0$, one can perform a zero-dot exchange to obtain a new Mazur manifold $M^{\bullet} \in \M_0$, together with a boundary diffeomorphism $\theta : \partial M_0 \xrightarrow{\cong} \partial M^{\bullet}$. 

We define one final subfamily $\M_S \subset \M_0$ (compare with the \textit{reasonably nice} Mazur manifolds in \cite{menagerie}). Assume a Mazur manifold $M_0 = M(K_0) \in \M_0$ is given by a knot $K_0 \subset S^1 \times S^2$, and recall that $\on{tb}(M_0)$ denotes the maximal Thurston--Bennequin invariant of a Legendrian realization of $K_0$ in the standard contact structure of $S^1 \times S^2$. Let $M_0 \in \M_S$ if $\on{tb}(M_0) > 0$, which implies that $M_0$ admits a Stein structure by the Eliashberg criterion. 

It can be shown, using the methods of \cite{Akbulut-Yasui}, that for $M_0 \in \M_S$, the boundary map $\theta : \partial M_0 \rightarrow \partial M^{\bullet}$ does not extend to a diffeomorphism $\Theta : M_0 \xrightarrow{\cong} M^{\bullet}$. Note that $\theta$ must always extend to a homeomorphism $M_0 \xrightarrow{\approx} M^{\bullet}$ by Freedman's work \cite{Freedman}. Thus, the family $\M_S$ consists of Mazur manifolds with some exotic property. 

Note that unlike the larger families, $\M_S$ is not closed under orientation reversal, i.e., $M_0 \in \M_S$ is not equivalent to $-M_0 \in \M_S$. Nevertheless, given $M_0 \in \M_S$, it still follows that the boundary map $-\theta : -\partial M_0 \rightarrow - \partial M^{\bullet}$ does not extend to a diffeomorphism $-\Theta : -M_0 \xrightarrow{\cong} -M^{\bullet}$ since that would also give a diffeomorphism $\Theta : M_0 \xrightarrow{\cong} M^{\bullet}$ extending $\theta$.

We have inclusions $$\M_S \subset \M_0 \subset \M \subset \A \subset \B \subset \C.$$

\subsection{Knots in \texorpdfstring{$S^4$}{S4}}\label{2.1}

First of all, we observe that it makes sense to speak about knotting a contractible 4-manifold $C \in \C$ only after we fix a standard embedding $C \hookrightarrow S^4$, which might not even exist. This raises the question of finding a smooth, contractible 4-manifold $C$ which does not embed in $S^4$, but we do not pursue it.

On the other hand, every 4-manifold $X \in \A$ embeds in $S^4$, and in a fairly standard way. Let $\Sigma = X \cup -X$ be the double of $X$ and view it as the boundary of $X \times I$, which inherits a handle structure from $X$. By definition, each 2-handle $r_i$ homotopically cancels the 1-handle $x_i$ in the handle structure of $X$. Since homotopy implies isotopy for curves in a 4-manifold, the corresponding handles of $X \times I$ must geometrically cancel, giving $X \times I \cong B^5$. It follows that 
$$\Sigma = X \cup -X =  \partial (X \times I) \cong \partial B^5 = S^4,$$ providing an embedding $X \hookrightarrow S^4$ with complement $S^4 \backslash X = -X$. See \cite{Melvin-Schwartz} for more details. This symmetry is what allows us to call this embedding \textit{standard}, and we shall denote it $\varphi_X$. 

We note that there is also an alternative, more explicit demonstration of the above fact, which relies on Kirby calculus. Its main idea is used repeatedly in our proof of Theorem~\ref{thm4}.\\

Having chosen standard embeddings for $\A$, we are now free to discuss \textit{knotted}, i.e., non-isotopic, embeddings. The main result in this direction is the following theorem by Lickorish, where a balanced presentation of a group is one with an equal number of generators and relations.

\begin{thm}[{\cite{Lickorish}}]\label{Lickorish}
    Let $G$ be any finitely presented, perfect group with a balanced presentation. There exists a 4-manifold $X_G \in \A$ and an embedding $\varphi : X_G \hookrightarrow S^4$ such that $\pi_1(S^4 \backslash \varphi(X_G)) = G$. 
\end{thm}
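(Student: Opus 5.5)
The plan is to build $X_G$ directly from the given balanced presentation $G = \langle g_1, \dots, g_k \mid w_1, \dots, w_k\rangle$, in such a way that $X_G$ is visibly in $\A$, its double is $S^4$ by the argument above, and $\pi_1$ of the complement of some embedding is $G$. The construction I have in mind is a 2-complex thickening trick.

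First, take a handlebody $Y$ with one 0-handle, $k$ 1-handles $x_1,\dots,x_k$ and $k$ 2-handles, whose attaching circles read off $w_1,\dots,w_k$. Then $\pi_1(Y)=G$. Since $G$ is perfect and the presentation is balanced, $H_1(Y)=0$ and the intersection/linking data force $H_2(Y)=0$ after choosing framings appropriately, so $Y$ is a homology ball with $\partial Y$ a homology sphere. This $Y$ is not contractible, so it cannot be $X_G$ itself. Instead I would realize $Y$ as the \emph{complement}: the goal is a decomposition $S^4 = X_G \cup -Y'$, where $Y'$ is built on the dual side so that $\pi_1(Y')=G$.

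Concretely, I would embed the handles of $Y$ in $S^4$ so that they appear as dual handles of something contractible. Start with $\natural^k S^1\times B^3$ standardly in $S^4$; its complement is $\natural^k S^2\times B^2$. Attach $k$ 2-handles $r_i$ along curves $\gamma_i$ in $\#^k S^1\times S^2$, where $\gamma_i$ is homotopic to $x_i$ but isotoped by crossing changes so that it twists around the belt spheres in a pattern governed by $w_i$. Because $\gamma_i \simeq x_i$, the resulting $X_G$ has the standard presentation $r_i=x_i$, so $X_G\in\A$. The complement $S^4\setminus X_G$ is then $\natural^k S^2\times B^2$ with the cocores of the $r_i$ removed, i.e.\ with 2-handles turned upside down. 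Its fundamental group is generated by meridians of the $k$ attaching curves $\gamma_i$ (dually, the 1-handles of the complement), and the relations come from the dual 2-handles, namely the $0$-framed meridians of the original dotted circles. These read off words determined by how the $\gamma_j$ thread through the spanning disks. I would choose the $\gamma_i$ (via band sums with suitable ``clasper''-type curves) so that these relations are exactly $w_1,\dots,w_k$.

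The main obstacle is realizing an arbitrary balanced perfect presentation as the complement group while keeping each $\gamma_i$ homotopic to $x_i$. The homotopy condition constrains the $\gamma_i$ only up to crossing changes, which leave $\pi_1(X_G)$ trivial while changing the complement's presentation. The delicate point is showing that crossing changes suffice to produce any words $w_i$. Abelianization must be compatible, so perfectness of $G$ (a unimodular linking matrix) is exactly what is needed. I would handle this by first realizing the abelian data with $\det=\pm1$ and then adding commutator-type clasps, which are null-homotopic, to adjust each relator word by arbitrary commutators. Finally, Van Kampen on the decomposition gives $\pi_1(S^4\setminus\varphi(X_G))=G$.
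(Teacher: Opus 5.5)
The paper only cites this theorem from Lickorish, so I am judging your outline on its own. The gap is in the middle step. To attach the 2-handles $r_i$ inside $S^4$ you must choose disjoint, correctly framed, embedded disks $\Delta_i\subset\natural^k S^2\times B^2$ with $\partial\Delta_i=\gamma_i$. The group $\pi_1(S^4\setminus X_G)$ depends on these disks, not only on the curves $\gamma_i$: the same abstract $X_G$ also embeds standardly, with contractible complement.

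Your description of the complement is $\partial X_G\times I$ plus 2-handles along the $0$-framed meridians $u_i$ of the dotted circles plus 3- and 4-handles, with $\pi_1$ generated by meridians of the $\gamma_j$. This amounts to the claim that $X_G$ together with these handles is $S^4$. Cancelling each dotted $x_i$ against $u_i$ just erases the dot. So the claim is guaranteed when the framed link $\{\gamma_i\}\subset S^3$, read with the dots erased, is a $0$-framed unlink, and it fails in general: the surgery on $\{\gamma_i\}$ need not even be $\#^k S^1\times S^2$, so the 3-handles cannot be attached. Under that condition $\pi_1(S^3\setminus\cup_j\gamma_j)$ is free on the meridians $g_j$, and the relators are the words of the dotted circles $x_i$ in it.

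You never impose this condition, and your proposed mechanism, crossing changes among the $\gamma_i$, will typically knot and link them in $S^3$ and destroy it. So the ``main obstacle'' you isolate (realizing the words $w_i$) is not the real one, and the concluding van Kampen step has nothing to stand on.

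What is needed is the zero--dot exchange statement used throughout the paper. If $A=\{x_i\}$ and $B=\{\gamma_j\}$ are each unlinks in $S^3$, let $X_G$ have dots on $A$ and $0$-framings on $B$, and let $Y$ have dots on $B$ and $0$-framings on $A$. Then $X_G\cup -Y\cong S^4$, with $\pi_1(X_G)=\langle x\mid [\gamma_j]\rangle$ and $\pi_1(Y)=\langle g\mid [x_i]\rangle$. Here each bracket is the word read off in the free group of the other unlink's complement.

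The realization is then combinatorial. Take spanning disks for $A$ and for $B$, disjoint within each family and meeting each other only in clasps formed by thin fingers. A clasp between the disks of $x_i$ and $\gamma_j$ contributes $g_j^{\epsilon}$ to the word of $x_i$ and $x_i^{\epsilon}$ to the word of $\gamma_j$. The order of the clasps along each component can be chosen freely.

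Because $\gamma_j$ must read $x_j$, you get $\mathrm{lk}(x_i,\gamma_j)=\delta_{ij}$. So the exponent-sum matrix of the $w_i$ has to be the identity, not merely unimodular as you suggest. First arrange this by multiplying and inverting relators, which is possible since perfectness puts that matrix in $GL_k(\BZ)$. Then order the clasps along $x_i$ to spell $w_i$, and along $\gamma_j$ so that the letters cancel in adjacent pairs $x_i^{\pm1}x_i^{\mp1}$, leaving a single $x_j$. This gives $X_G\in\A^k$ with $\pi_1(S^4\setminus X_G)=\pi_1(Y)=G$.

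A minor point: $H_2(Y)=0$ follows from $H_1(Y)=0$ and $\chi(Y)=1$; framings play no role there.
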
 

Livingston makes several observations about Lickorish's construction in \cite{Livingston}.
First, he shows that for each group $G$ as above, one can find infinitely many manifolds $X_G$ which can be knotted in $S^4$ with fundamental group $G$. Moreover, one can select these manifolds to be distinct for different groups $G$. The idea is to take connected sums with the standard embedding of some Mazur manifold $M$, which does not affect the fundamental group of the complement.

It is evident in Lickorish's proof that if the group $G$ has a balanced presentation of size $n$, then $X_G$ can be chosen from the filtration level $\A^n$. It is natural to ask when the manifolds $X_G$ can be chosen from the lower filtration levels. Livingston's second observation states that if $G$ can be obtained by adding a single relation to a classical knot group $\pi_1(S^3 \backslash K)$, then the corresponding manifold $X_G$ can be chosen to be Mazur: $X_G \in \A^1 = \M$. 

Finally, Livingston shows that if $G$ surjects onto a nontrivial finite quotient of a 2-knot group, then there exist infinitely many non-isotopic embeddings $\phi_i : X_G \hookrightarrow S^4$ distinguished by $\pi_1(S^4 \backslash \phi_i(X_G))$. In a later paper \cite{Livingston2}, Livingston also finds embeddings of contractible 4-manifolds realizing groups with arbitrarily large deficiency (groups with a balanced presentation have deficiency $0$).\\

One downside of these results is that they work in one direction: one starts with a group $G$, and based on its presentation or other properties, one constructs a contractible 4-manifold $X_G$ that can be knotted with group $G$. However, there is no strategy for starting with a prescribed manifold and finding an embedding with a nontrivial group. This leads to the following natural question.

\begin{quest}
Given an arbitrary 4-manifold $X \in \A$, does there exist an embedding $\varphi : X \hookrightarrow S^4$ such that $\pi_1(S^4 \backslash \varphi(X)) \neq 1$? Can such an embedding be found for Mazur manifolds $M \in \M$?
\end{quest}

It turns out that if we restrict our attention to the topological category, then the standard embeddings are detected by the fundamental group of the complement.
\begin{prop}\label{topknot}
    For $X \in \A$, if a topological embedding $\varphi : X \hookrightarrow S^4$ satisfies $\pi_1(S^4 \backslash \varphi(X)) = 1$, then it is topologically isotopic to the standard embedding $\varphi_X$. 
\end{prop}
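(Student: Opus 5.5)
We work throughout in the topological category and use Freedman's results. Set $N \coloneq S^4 \backslash \mathrm{int}\,\varphi(X)$, a compact topological 4-manifold with boundary $\partial N = \varphi(\partial X)$. The goal is to show $N \approx -X$ via a homeomorphism extending the boundary identification, and then promote this to an isotopy of embeddings.

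\textbf{Step 1: $N$ is contractible.} Since $\partial X$ bounds the contractible $X$, it is an integral homology sphere. By Mayer--Vietoris for $S^4 = \varphi(X) \cup N$, glued along $\partial X$, $N$ has the integral homology of a point. By hypothesis $\pi_1(N)=1$. A simply connected, acyclic CW-complex is contractible by Hurewicz and Whitehead, so $N$ is contractible.

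\textbf{Step 2: identify $N$ with $-X$ rel boundary.} The complement of the standard embedding is $-X$, glued via the identity on $\partial X$. Freedman's theorem says every homology 3-sphere bounds a contractible topological 4-manifold, unique up to homeomorphism, and every homeomorphism of the boundary extends. Applying this, the boundary identification $\partial N = \varphi(\partial X) \cong \partial X = \partial(-X)$ extends to a homeomorphism $\Psi : -X \to N$. Combining $\varphi$ on $X$ with $\Psi$ on $-X$ gives a homeomorphism
$$F : S^4 = X \cup_{\Id} -X \longrightarrow S^4$$
satisfying $F \circ \varphi_X = \varphi$.

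\textbf{Step 3: isotope $F$ to the identity.} We may assume $F$ is orientation-preserving. If $\varphi$ reverses orientation, compose with an orientation-reversing self-homeomorphism of the standard pair, or restrict to the orientation-preserving embeddings as in the paper's conventions. By the topological Alexander trick, together with the fact that orientation-preserving homeomorphisms of $S^4$ are isotopic to the identity (e.g.\ via Kirby--Siebenmann and Quinn), there is an isotopy $F_t$ from $\Id$ to $F$. Then $F_t \circ \varphi_X$ is a topological isotopy from $\varphi_X$ to $\varphi$.

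The main obstacle is Step 3. For general topological embeddings, $N$ need only be a topological manifold, which is fine for Freedman's theorem. However, the isotopy statement for $\mathrm{Homeo}^+(S^4)$ requires more care. The standard route: isotope $F$ to fix a point, then use the Alexander trick on $\mathbb{R}^4$ after the stable homeomorphism theorem. Dimension 4 is covered by Quinn's result.
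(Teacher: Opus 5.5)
Your proposal is correct and follows essentially the same route as the paper. Freedman's theorem extends the boundary identification to a homeomorphism of the complements, this combines with $\varphi$ into a homeomorphism of $S^4$ carrying $\varphi_X$ to $\varphi$ (your $F$ is the inverse of the paper's $\phi_0$), and an isotopy of that homeomorphism turns the equivalence into an isotopy of embeddings. There are two minor differences: you check contractibility of the complement via Mayer--Vietoris, which the paper takes for granted, and you isotope $F$ all the way to the identity, whereas the paper only isotopes $\phi_0$ to a homeomorphism fixing a $4$-ball containing $\varphi(X)$ --- both use the same input (the $4$-dimensional annulus theorem plus the Alexander trick).
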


\begin{proof}
Assume $\varphi$ is as above; then $-X$ and $S^4 \backslash \varphi(X)$ are two contractible, hence simply connected, topological 4-manifolds with homeomorphic boundaries. Since $Y = \partial X$ must be a homology sphere, it follows from \cite{Freedman} that there exists a homeomorphism $\phi : S^4 \backslash \varphi(X) \rightarrow -X$ extending the identity map on $Y$. Further extending $\phi$ across $X$ by identity, one obtains a homeomorphism of pairs 
$$\phi_0 : (S^4, \varphi(X)) \xrightarrow{\approx} (S^4, \varphi_X(X)).$$ 
Next, choose a 4-ball $B \subset S^4$ containing $\varphi(X)$. We can find an isotopy $\phi_t$ from $\phi_0$ to a homeomorphism $\phi_1 : S^4 \rightarrow S^4$ that fixes $B$. Then it must also fix $\varphi(X) \subset B$, so we have $$\phi_0(\varphi(X)) = \varphi_X(X),\quad \phi_1(\varphi(X)) = \varphi(X).$$ It follows that $\phi_t \circ \varphi$ is a topological isotopy between the embeddings $\varphi$ and $\varphi_X$, as needed. 
We learned this strategy for turning an equivalence into an isotopy from \cite{Budney}.
\end{proof}

One can ask to what extent the fundamental group determines the topological isotopy type of an embedding of $X \in \A$. 
\begin{quest}
    Let $\varphi, \varphi' : X \hookrightarrow S^4$ be embeddings satisfying $\pi_1(S^4 \backslash \varphi(X)) \cong \pi_1(S^4 \backslash \varphi'(X)) \neq 1$. Are $\varphi$ and $\varphi'$ necessarily topologically isotopic?
\end{quest}

Moving beyond the fundamental group, note that the embeddings discussed so far are knotted both smoothly and topologically. This is because their complements are not simply connected, and hence cannot be homeomorphic to the complements of the standard embeddings, which are contractible. It is natural to ask whether there exist \textit{exotically} knotted embeddings $X \hookrightarrow S^4$, which are topologically but not smoothly isotopic to the standard embedding. Of course, in order to detect such embeddings one would need smooth obstructions rather than algebraic ones. 

Thus, we are interested in when a topologically standard embedding $\varphi$ can be shown to be smoothly non-isotopic to the standard embedding. Let $M_0 \in \M_0$ and recall that the zero-dot exchange operation gives rise to an embedding $\varphi : M_0 \hookrightarrow S^4$ with complement $S^4 \backslash \varphi(M_0) = -M^{\bullet}$. Since $M^{\bullet}$ is contractible, $\varphi$ must be topologically isotopic to $\varphi_{M_0}$. On the other hand, these embeddings can be smoothly distinguished for the subfamily $\M_S$. 

Assume $M_0 \in \M_S$ and consider the decomposition $S^4 = M_0 \cup_{\theta} -M^{\bullet}$ induced by $\varphi$. 
A smooth isotopy $\varphi \sim \varphi_{M_0}$ would give rise to a smooth equivalence between the decompositions $M_0 \cup_{\theta} - M^{\bullet}$ and $M_0 \cup_{\Id} -M_0$ extending the identity map on $M_0$. But that would provide a diffeomorphism $\Theta : M_0 \xrightarrow{\cong} M^{\bullet}$ extending $\theta$, resulting in a contradiction. Thus, $\varphi : M_0 \hookrightarrow S^4$ must be an exotically knotted embedding. 

This connection between exotic embeddings and the nonextension of a boundary map is more universal, as we hinted at in the introduction. Let $M \in \M$ be a Mazur manifold and assume $\varphi : M \hookrightarrow S^4$ is topologically standard. Then $N \coloneq -S^4 \backslash \varphi(M)$ must be homeomorphic to $M$. Let $\theta : \partial M \rightarrow \partial N$ be the obvious boundary diffeomorphism. If $\varphi$ were smoothly standard, $\theta$ would clearly extend to a diffeomorphism $\Theta : M \xrightarrow{\cong} N$. Conversely, if $\theta$ extends to such $\Theta$, we can further extend $\Theta$ over $M$ by identity, resulting in a smooth equivalence between $\varphi$ and $\varphi_M$. This leads to a smooth isotopy $\varphi \sim \varphi_M$ as in the proof of Proposition~\ref{topknot}. We obtain the following.
\begin{prop}\label{propEx}
    The embedding $\varphi$ is smoothly non-isotopic to $\varphi_M$ if and only if $\theta$ does not extend to a diffeomorphism $\Theta : M \xrightarrow{\cong} N$.
\end{prop}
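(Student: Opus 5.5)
We prove both implications of Proposition~\ref{propEx} by turning statements about the pair $(S^4,\varphi(M))$ into statements about the complement $N = -S^4\backslash\varphi(M)$ together with the gluing map $\theta$. The key model is the standard embedding $\varphi_M$, which induces the decomposition $S^4 = M\cup_{\Id} -M$. The embedding $\varphi$ induces $S^4 = M\cup_\theta -N$. Here $\theta:\partial M\to\partial N$ records how $\partial\varphi(M)$ is identified with the boundary of the complement.

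\emph{Isotopic implies $\theta$ extends.} Suppose $\varphi$ is smoothly isotopic to $\varphi_M$.
\begin{enumerate}
\item By the isotopy extension theorem there is an ambient diffeotopy $\Phi_t$ of $S^4$ with $\Phi_0=\Id$ and $\Phi_1\circ\varphi_M=\varphi$.
\item The diffeomorphism $\Phi_1$ maps the complement $S^4\backslash\varphi_M(M) = -M$ onto $S^4\backslash\varphi(M) = -N$. Reversing orientation gives a diffeomorphism $\Theta: M\to N$.
\item On the boundary, $\Theta$ is $\Phi_1$ restricted to $\partial\varphi_M(M)$. After the identifications $\varphi_M$ and $\varphi$ of the boundaries with $\partial M$, this restriction is exactly $\theta$, because $\Phi_1\circ\varphi_M=\varphi$ on $M$.
\end{enumerate}
The only thing to check is this last bookkeeping of the boundary identifications, and it holds by construction.

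\emph{$\theta$ extends implies isotopic.} Suppose $\Theta: M\to N$ is a diffeomorphism extending $\theta$.
\begin{enumerate}
\item Define $\phi_0: S^4 = M\cup_\theta -N \to M\cup_{\Id} -M = S^4$ to be the identity on $M$ and $\Theta^{-1}$ on $-N$. These agree on the gluing region precisely because $\Theta|_{\partial M}=\theta$.
\item Smoothness across the seam needs a collar argument: first isotope $\Theta$ near the boundary so that it is a product in collar coordinates. Then $\phi_0$ is a diffeomorphism of $S^4$ satisfying $\phi_0\circ\varphi = \varphi_M$.
\item Convert this equivalence into an isotopy by following the proof of Proposition~\ref{topknot} in the smooth category. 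Since $\varphi$ is orientation-preserving, so is $\phi_0$. Choose a smooth ball $B\supset\varphi(M)$. By the disc theorem (Palais/Cerf), $\phi_0$ is isotopic to a diffeomorphism $\phi_1$ that is the identity on $B$.
\item Then $\phi_t\circ\varphi$ is a smooth isotopy from $\phi_0\circ\varphi = \varphi_M$ to $\phi_1\circ\varphi = \varphi$.
\end{enumerate}

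The main obstacle is step 3 of the second implication. The disc theorem only arranges that $\phi_1$ fixes $B$; we also need $\phi_0$ to be isotopic to that map, which requires $\phi_0$ to lie in the identity component of $\on{Diff}^+(S^4)$. This is not known in general. We would bypass it by using only that $\phi_1$ fixes $B$: then $\phi_1\circ\varphi=\varphi$ on $M$. Whatever mapping class $\phi_1$ represents, composing with it makes $\varphi$ equivalent to $\varphi_M$ under an orientation-preserving diffeomorphism. If the paper's notion of "isotopic" must be taken literally, we would note that $\phi_0$ can be modified by a diffeomorphism supported in a ball away from $B$, and we would state the conclusion as an equivalence of pairs up to that ambiguity, as is implicit in the paragraph preceding Proposition~\ref{propEx}.
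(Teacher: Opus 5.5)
Your argument is the same as the paper's. The forward direction restricts the ambient diffeomorphism to the complement. The converse glues $\Theta^{-1}$ to the identity on $M$ to get a diffeomorphism of pairs, then upgrades it to an isotopy exactly as in the proof of Proposition~\ref{topknot}.

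The ``main obstacle'' in your last paragraph is not real, and you should drop the proposed weakening to ``equivalence of pairs''. The disc theorem gives an ambient isotopy $\psi_t$ with $\psi_0=\Id$ and $\psi_1\circ\phi_0|_B=\Id_B$. So $\phi_t\coloneq\psi_t\circ\phi_0$ is an isotopy from $\phi_0$ to a diffeomorphism $\phi_1$ fixing $B$, whichever component of $\on{Diff}^+(S^4)$ contains $\phi_0$. Since $\phi_1$ is the identity on $B\supset\varphi(M)$, the family $\phi_t\circ\varphi$ is an honest smooth isotopy of embeddings from $\varphi_M$ to $\varphi$. You never need $\phi_0$ to be isotopic to the identity, so your steps 3--4 already prove the statement as written.
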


\begin{remark}\label{rem2.6}
A similar argument shows that if $\theta$ does not extend, then the restriction of $\varphi$ to $\partial M$ is also topologically but not smoothly isotopic to the restriction of $\varphi_{M}$ to $\partial M$. Expanding on this idea, Auckly and Ruberman in \cite{Auckly-Ruberman} use the infinite-order corks constructed by Gompf \cite{Gompf} to find an infinite family of homology 3-spheres, and for each of them an infinite family of topologically isotopic but smoothly non-isotopic embeddings into $S^4$. 
\end{remark} 

\subsection{Links in \texorpdfstring{$S^4$}{S4}}\label{2.2}
Having gotten a taste of knotting contractible 4-manifolds in $S^4$, we turn our attention to the question of \textit{links} of several contractible 4-manifolds given as $$L : X_1 \sqcup \dots \sqcup X_m \hookrightarrow S^4.$$ Since $L$ restricts to an embedding of each of the components $X_i$, it makes sense to restrict our attention to the case $X_i \in \A$, so that we have the standard embeddings $X_i \hookrightarrow S^4$. 
Factoring these through a 4-ball $B^4 \subset S^4$, we obtain standard embeddings $X_i \hookrightarrow B^4$, which, after a choice of disjoint 4-balls $B_i \subset S^4$, lead to the definition of the \textit{unlink} $$L_0 : X_1 \sqcup \dots \sqcup X_m \hookrightarrow B_1 \sqcup \dots \sqcup B_m \subset S^4.$$ Since the standard embeddings $X_i \hookrightarrow B_i$ satisfy $\pi_1(B_i \backslash X_i) = \pi_1(-X_i \backslash B^4) = 1$, we obtain, by inductively applying the van Kampen theorem, $\pi_1(S^4 \backslash L_0) = 1$. 

\begin{remark}
We briefly note that the decomposition $S^4 = X \cup -X$ provides a link
$$L_X :  X \sqcup - X \hookrightarrow S^4$$
after pushing each copy of $X$ inside. This link has complement $\partial X \times I$, which actually implies that it cannot be topologically split by $S^3$. This is because any homology 3-sphere $Y$ splitting $L$ embeds into $\partial X \times I$ as a homological section. Composing with the projection $\partial X \times I \rightarrow \partial X$, we obtain a map $Y \rightarrow \partial X$ of degree one. Then the induced map on the fundamental group must be a surjection $\pi_1(Y)  \twoheadrightarrow \pi_1(\partial X)$, hence $Y$ is not $S^3$. This idea comes from \cite{Marcos}. Since each $X \in \A$ gives rise to a non-split link $L_X : X \sqcup -X \hookrightarrow S^4$, the more interesting question is to find non-split links of distinct Mazur manifolds, which is what Theorem~\ref{thm1} accomplishes.
\end{remark}

We wish to compare a general link $L : X_1 \sqcup \dots \sqcup X_m \hookrightarrow S^4$ to $L_0$ in the topological category, as we did for knots $X \hookrightarrow S^4$. Clearly, the two links would be distinct if their restrictions to any of the $X_i$ were different. Thus, we must assume that $L$ induces a topologically standard embedding of each $X_i$, or equivalently, $\pi_1(S^4 \backslash L
(X_i)) = 1$. Similarly, if $\pi_1(S^4 \backslash L) \neq 1$, then $L$ is not even topologically equivalent to $L_0$, i.e., there is no homeomorphism between the pairs $(S^4, L_0)$ and $(S^4, L)$. Thus, we must also assume $\pi_1(S^4 \backslash L) = 1$, which actually implies $\pi_1(S^4 \backslash L(X_i)) = 1$. We claim that this condition is sufficient.

\begin{prop}\label{toplink}
Let $L : X_1 \sqcup \dots \sqcup X_m \hookrightarrow S^4$ be a topological link satisfying $\pi_1(S^4 \backslash L) = 1$. Then $L$ is topologically isotopic to the unlink $L_0$. 
\end{prop}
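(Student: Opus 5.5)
The plan is to mimic the proof of Proposition~\ref{topknot}: build a homeomorphism of pairs $(S^4, L) \approx (S^4, L_0)$ that restricts to the identity on each $X_i$, and then convert it into an isotopy by the trick from \cite{Budney}. Set $W = S^4 \backslash L$. Its boundary is $\partial W = -\bigsqcup_i \partial X_i$, a disjoint union of homology spheres $Y_i$. Similarly $W_0 = S^4 \backslash L_0$ has the same boundary. The key point is to show that $W$ is homeomorphic to $W_0$ by a homeomorphism restricting to the identity on each $Y_i$.

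\textbf{Identifying $W_0$.} Inside each $B_i$, the complement $B_i \backslash X_i$ is $-X_i$ with an open ball removed, i.e.\ a homology cobordism from $Y_i$ to $S^3$. So $W_0$ is $S^4$ minus $m$ balls, with the collars $-X_i \backslash B^4$ glued on. Equivalently, $W_0 \cup \bigsqcup_i X_i' $ is a punctured $S^4$, where the $X_i'$ are copies of $-X_i$.

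\textbf{Recognizing $W$.} By Mayer--Vietoris and Alexander duality, $W$ has the homology of $S^4$ minus $m$ points. Moreover $\pi_1(W)=1$ by hypothesis. Cap each boundary component $Y_i$ of $W$ by the contractible manifold $-X_i$, glued via the identity. The resulting $\widehat W$ is simply connected by van Kampen, since $Y_i$ is a homology sphere. It is also closed with the homology of $S^4$, so it is a homotopy $4$-sphere and hence homeomorphic to $S^4$ by \cite{Freedman}. Doing the same to $W_0$ gives $S^4$ as well.

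\textbf{Building the homeomorphism.} In $\widehat W \approx S^4$ the caps $-X_i$ form a topological link of contractible manifolds. I would rather avoid recursion and instead use Freedman's classification of simply connected compact $4$-manifolds with homology-sphere boundary components. Such manifolds with vanishing intersection form (which holds here, since $H_2(W)=0$) are determined up to homeomorphism rel boundary. This extends the single-boundary statement used in Proposition~\ref{topknot}, which also appears in \cite{Freedman} (e.g.\ via the relative $h$-cobordism/surgery: $W$ and $W_0$ are homotopy equivalent rel boundary, and in the simply connected case the structure set is trivial).

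If one prefers to reduce to the connected-boundary case, drill out arcs connecting the $m$ boundary components. This yields a simply connected $4$-manifold with the homology of a ball and boundary $\#_i(-Y_i)$. Freedman's theorem then gives a homeomorphism rel boundary between the drilled versions of $W$ and $W_0$. I would check that the arcs can be chosen compatibly, which is fine since arcs in a simply connected $4$-manifold are unique up to isotopy.

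Either way, we obtain $\phi: W \to W_0$ that is the identity on $\partial$. Extending by the identity on the $X_i$ gives a homeomorphism of pairs $\phi_0:(S^4,L)\to(S^4,L_0)$ with $\phi_0\circ L = L_0$.

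\textbf{Converting to an isotopy.} This is done exactly as in Proposition~\ref{topknot}. Choose a ball $B$ containing $L$. Any orientation-preserving homeomorphism of $S^4$ is isotopic to one fixing $B$, by the Alexander trick and topological annulus/stable homeomorphism results in dimension $4$. This produces the isotopy $\phi_t\circ L$ from $L$ to $L_0$.

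The main obstacle is the middle step: making sure the rel-boundary classification applies with several boundary components. I expect the arc-drilling reduction to handle it cleanly.
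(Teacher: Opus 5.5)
Your proposal is correct and is essentially the paper's argument: drill $m-1$ arcs with matching endpoints to get contractible manifolds with connected boundary $\#_i Y_i$, apply Freedman's theorem rel boundary, extend over the arc neighborhoods and by the identity over the $X_i$, and then upgrade the equivalence of pairs to an isotopy by isotoping $\phi_0$ to a homeomorphism fixing a ball containing $L$ (your single ball works as well as the paper's preimage balls $B_i' = \Phi_0^{-1}(B_i)$). The paper also first checks, via van Kampen and Proposition~\ref{topknot}, that each component is topologically standard, a step your argument does not need; your capping-off paragraph is likewise unnecessary, since capping $W$ by the $X_i$ simply returns $S^4$.
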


\begin{proof}
First, we confirm that the condition $\pi_1(S^4 \backslash L) = 1$ indeed implies $\pi_1(S^4 \backslash L(X_i)) = 1$. Starting with $S^4 \backslash L$, we can inductively glue the $X_i$ back in until we obtain $S^4 \backslash L(X_m)$. After each gluing, an application of the van Kampen theorem shows that the new space remains simply connected, hence $\pi_1(S^4 \backslash L(X_m)) = 1$. Similarly, $\pi_1(S^4 \backslash L(X_i)) = 1$ for all $i$. It follows from Proposition~\ref{topknot} that every restriction $L|_{X_i}$ is topologically standard. 

Next, we wish to find a homeomorphism of pairs $\Phi_0 : (S^4, L) \xrightarrow{\approx} (S^4, L_0)$. Comparing the complements $W \coloneq S^4 \backslash L$ and $W_0 \coloneq S^4 \backslash L_0$, we see that they are both simply connected, have trivial second homology, and have homeomorphic boundaries. However, Freedman's result does not apply directly since their boundary consists of multiple connected components. We reduce to the case of connected boundary following \cite[Construction~3.5]{Orson-Powell}.

For each $1 \leq i < m$, choose properly embedded, disjoint arcs $\gamma_i \subset W_0$ and $\gamma_i' \subset W$ so that they have one end on $Y_i = \partial X_i$ and another on $Y_{i+1} = \partial X_{i+1}$. We can assume that the endpoints of $\gamma_i$ and $\gamma_i'$ agree in $Y_i$ and $Y_{i+1}$. Deleting these arcs, we obtain
$$W' \coloneq W \backslash \cup_i \nu(\gamma_i'), \quad W_0' \coloneq W_0 \backslash \cup_i \nu(\gamma_i)$$
with boundary
$$Y \coloneq \partial W' = Y_1 \# \cdots \#Y_m = \partial W_0'.$$
Since $W'$ and $W_0'$ are 4-manifolds with connected boundary, Freedman's result applies, giving us a homeomorphism of pairs 
$$\Phi_0'' : (W', Y) \xrightarrow{\approx} (W_0', Y).$$

Since we chose the arcs $\gamma_i$ and $\gamma_i'$ so that their endpoints are identified, $\Phi_0''$ extends across the neighborhoods $\nu(\gamma_i) \cong \gamma_i \times B^3$ and $\nu(\gamma_i') \cong \gamma_i' \times B^3$, resulting in a homeomorphism
$$\Phi_0' : W \xrightarrow{\approx} W_0, \quad \Phi_0' \coloneq (\Phi_0'', \Id) : W' \cup_i \nu(\gamma_i') \xrightarrow{\approx} W_0' \cup_i \nu(\gamma_i).$$
Finally, we extend $\Phi_0'$ across each $X_i$ by identity, obtaining a homeomorphism of pairs
$$\Phi_0 : (S^4, L) \xrightarrow{\approx} (S^4, L_0).$$

This implies that the links $L$ and $L_0$ are topologically equivalent. To obtain a topological isotopy, we consider the 4-balls $B_i \subset S^4$ containing $L_0(X_i)$ and their preimages $B_i' \coloneq \Phi_0^{-1}(B_i) \subset S^4$, which contain $L(X_i)$. We can find an isotopy $\Phi_t$ from $\Phi_0$ to a homeomorphism $\Phi_1 : S^4 \xrightarrow{\approx} S^4$ which restricts to identity on each $B_i'$. In particular, $\Phi_1$ fixes $L(X_i) \subset B_i'$; thus $\Phi_1(L) = L$. Since $\Phi_0(L) = L_0$, $\Phi_t \circ L$ provides a topological isotopy between the links $L_0$ and $L$, as desired. 
\end{proof}

Turning to the question of exotic links, we note that smoothly distinguishing two topologically isotopic links $L, L_0$ is difficult. Of course, this can be done if their restrictions to some $X_i$ are smoothly non-isotopic, but this simply reduces the question to distinguishing smooth \textit{knots} of contractible 4-manifolds. Indeed, to the best of our knowledge, Theorem~\ref{thm2} provides the first example of topologically isotopic but smoothly non-isotopic links of contractible 4-manifolds that are not distinguished by individual components. However, this is achieved not in $S^4$, but only in definite 4-manifolds (we discuss links in larger 4-manifolds below). Moreover, it is likely that many of the restrictions $L_{C, n}|_{M'}$ are smoothly knotted, but we neither use nor prove this. It would be interesting to find smoothly nonstandard links with smoothly standard components; see Question~\ref{Q2}.

\subsection{Knots in larger 4-manifolds}\label{2.3}
Let $W$ be a smooth, compact, connected, and oriented 4-manifold other than $S^4$, such as $\#^n \BC \BP^2$. Given a contractible manifold $X \in \A$, we obtain a \textit{standard} embedding $\varphi_W : X \hookrightarrow W$ by using $\varphi_X$ to embed $X$ in a 4-ball $B^4 \subset W$, which is unique by \cite{Palais}. We turn to the question of \textit{knotted} embeddings $\varphi : X \hookrightarrow W$ that are not isotopic to $\varphi_W$.

It turns out that the topologically standard embeddings are essentially detected by the fundamental group of the complement, similarly to the case of $S^4$.
\begin{prop}\label{bigknot}
Let $W$ be a smooth, closed, simply connected 4-manifold and let $X \in \A$. If a smooth embedding $\varphi : X \hookrightarrow W$ satisfies $\pi_1(W \backslash \varphi(X)) = 1$, then it is topologically isotopic to $\varphi_W$. 
\end{prop}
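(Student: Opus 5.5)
We follow the strategy of Propositions~\ref{topknot} and~\ref{toplink}: first produce a homeomorphism of pairs $(W, \varphi(X)) \approx (W, \varphi_W(X))$ that restricts to the identity on the copy of $X$, and then turn this equivalence into an isotopy.

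Set $V \coloneq W \backslash \varphi(X)$ and $V_0 \coloneq W \backslash \varphi_W(X)$. Since $\varphi_W$ factors through a ball $B^4 \subset W$, we have
$$V_0 = (W \backslash B^4) \cup_{S^3} (B^4 \backslash \varphi_X(X)) = (W \backslash B^4) \natural (-X),$$
that is, $V_0 \cong (W\backslash B^4)$ glued along a ball to $-X$. This manifold is simply connected by van Kampen.

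We compare the invariants of $V$ and $V_0$.
\begin{itemize}
\item \emph{Fundamental group.} By hypothesis $\pi_1(V)=1$, and we have just seen $\pi_1(V_0)=1$.
\item \emph{Homology.} Mayer--Vietoris for $W = X \cup_Y V$, with $Y=\partial X$ a homology sphere and $X$ contractible, gives $H_*(V) \cong H_*(W \backslash B^4)$. The same holds for $V_0$.
\item \emph{Intersection form.} Since $Y$ is a homology sphere, $H_2(V) \to H_2(W)$ is an isomorphism, and it preserves intersection forms because classes in $V$ are supported away from $X$. So both $V$ and $V_0$ have intersection form isomorphic to $Q_W$, and the composite identification $H_2(V)\cong H_2(W)\cong H_2(V_0)$ is an isometry.
\item \emph{Kirby--Siebenmann invariant.} Both $V$ and $V_0$ are smooth, so both invariants vanish.
\end{itemize}

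Now apply Freedman's classification of compact simply connected topological 4-manifolds with homology sphere boundary \cite{Freedman}, in its relative form. Given a homeomorphism of the boundaries (here the identity of $Y$) and an isometry of intersection forms, with matching Kirby--Siebenmann invariants, it extends to a homeomorphism $\phi : V \to V_0$ restricting to the identity on $Y$. Gluing $\phi$ to the identity on $X$ gives a homeomorphism
$$\phi_0 : (W, \varphi(X)) \xrightarrow{\approx} (W, \varphi_W(X)),$$
with $\phi_0 \circ \varphi = \varphi_W$.

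This is the main obstacle. The homeomorphism $\phi_0$ is a self-homeomorphism of $W$, not necessarily isotopic to the identity, whereas in $S^4$ one could use that every orientation-preserving homeomorphism is isotopic to the identity. The remedy is to choose the isometry in Freedman's theorem cleverly. Under the identifications $H_2(V)\cong H_2(W)\cong H_2(V_0)$, we pick the isometry to be the identity of $H_2(W)$. Then $\phi_0$ acts trivially on $H_2(W)$. By Quinn's theorem (for closed simply connected 4-manifolds, homeomorphisms inducing the identity on $H_2$ are topologically isotopic to the identity; see also Perron), $\phi_0$ is isotopic to the identity through homeomorphisms $\phi_t$, $t\in[0,1]$, with $\phi_1=\mathrm{id}$. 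If that reference is not to be invoked, the fallback is to construct the homeomorphism directly using the relative, boundary-fixing version of Freedman's theorem and to track how it acts on homology; the extension over $X$ then poses no further difficulty.

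Finally, $\phi_t^{-1}\circ \varphi_W$ is a path of embeddings from $\phi_0^{-1}\circ\varphi_W = \varphi$ to $\varphi_W$. This gives the desired topological isotopy between $\varphi$ and $\varphi_W$.
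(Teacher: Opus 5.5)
Your proof is correct, and its first half matches the paper's: you compare $\pi_1$, homology, intersection form and Kirby--Siebenmann invariant of the two complements, then apply the relative classification (Boyer's theorem, in the paper) to get $\phi_0$ extending the identity on $X$.

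You diverge in passing from equivalence to isotopy. You treat the mapping class of $\phi_0$ as the main obstacle. You remove it by prescribing the isometry to be the identity of $H_2(W)$ and invoking Quinn's theorem that homeomorphisms acting trivially on $H_2$ are isotopic to the identity. The paper shows this obstacle is not real:
\begin{itemize}
\item Pull back a ball $B \supset \varphi_W(X)$ to the locally flat ball $B' = \phi_0^{-1}(B) \supset \varphi(X)$.
\item Use uniqueness of orientation-preserving locally flat ball embeddings up to ambient isotopy to isotope $\phi_0$ to some $\phi_1$ fixing $B'$ pointwise.
\item Then $\phi_t \circ \varphi$ runs from $\varphi_W$ to $\varphi$, whatever $\phi_1$ does outside $B'$.
\end{itemize}
This is the same mechanism as in the $S^4$ case, which, contrary to your remark, never used that homeomorphisms of $S^4$ are isotopic to the identity. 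It works for an arbitrary $\phi_0$. It needs neither control of the isometry nor Quinn's much deeper isotopy theorem, whose original proof had a gap filled only recently by Gabai--Gay--Hartman--Krushkal--Powell.

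Your route does end with $\phi_1 = \mathrm{id}$, but the paper's can be normalized the same way by replacing $\phi_t$ with $\phi_t \circ \phi_1^{-1}$. So the extra machinery buys nothing here.

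Finally, your ``fallback'' sentence is not an argument. Knowing how $\phi_0$ acts on homology gives no isotopy without either Quinn's theorem or the ball trick above.
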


\begin{proof}
    Set $Y = \partial X$ and consider the complements $W_1 = W \backslash \varphi(X)$ and $W_0 = W \backslash \varphi_W(X)$. They are both simply connected, have boundary $Y$ which is a homology 3-sphere, and must have intersection forms isomorphic to $Q_W$ since $X$ is contractible. It follows from \cite{Boyer1} that there is a homeomorphism $\phi : W_1 \rightarrow W_0$ which restricts to identity on $Y$. Extending to $X$ by identity, we obtain a homeomorphism of pairs
    $$\phi_0 : (W, \varphi(X)) \xrightarrow{\approx} (W, \varphi_W(X)).$$
    Since $\varphi_W(X)$ lies in a 4-ball, we can pull it back via $\phi_0$ to obtain a topological 4-ball containing $\varphi(X)$. Taking an isotopy $\phi_t$ to a homeomorphism $\phi_1$ fixing that 4-ball, we obtain a topological isotopy $\phi_t \circ \varphi$ between $\varphi_W$ and $\varphi$. Here we only needed $\varphi$ to be smooth in order to avoid dealing with the Kirby--Siebenmann invariant in \cite[Theorem~0.7]{Boyer1}.
\end{proof}

It is natural to distinguish between locally and globally knotted embeddings of $X$. We call an embedding $\varphi : X \hookrightarrow W$ \textit{local} if its image can be isotoped to lie inside a 4-ball, and we call it \textit{global} otherwise. Understanding local embeddings $X \hookrightarrow W$ reduces to the question of knots in $S^4$, hence the right question to study for larger 4-manifolds is about globally knotted embeddings $\varphi : X \hookrightarrow W$ that cannot be isotoped to lie inside a 4-ball. 

It turns out that the distinction between local and global knotting phenomena is not well understood even for 2-spheres. Indeed, the author is aware only of the following result by Lee.

\begin{prop}[\cite{Lee}]\label{propLee}
There exist smooth, null-homotopic spheres $K \hookrightarrow S^2 \times S^2$ that cannot be isotoped to lie inside a 4-ball, even topologically. 
\end{prop}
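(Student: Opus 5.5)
This result is quoted from Lee, but here is how I would prove it. The plan is to find an invariant of a null-homotopic sphere $K \subset S^2 \times S^2$ that is defined purely in terms of the complement, so that it is automatically a topological isotopy invariant. I would then show that every \emph{local} sphere has a restricted value of this invariant, and exhibit a sphere that violates the restriction.

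\textbf{The invariant.} Since $K$ is null-homologous, its meridian generates $H_1(S^2 \times S^2 \backslash K) \cong \BZ$. So the exterior $E_K$ has a canonical infinite cyclic cover $\widetilde{E}_K$. Its second homology $H_2(\widetilde{E}_K)$ is a module over $\Lambda = \BZ[t^{\pm 1}]$ and carries a Hermitian equivariant intersection form $\lambda_K$. Both depend only on the topological pair up to homeomorphism.

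\textbf{Local spheres.} Suppose $K$ lies in a 4-ball $B$. Then
$$E_K = (B \backslash \nu K) \cup_{S^3} (S^2 \times S^2 \backslash B).$$
The second piece is simply connected, so its preimage in the cover is a disjoint union of $\BZ$ copies of it. A Mayer--Vietoris argument in the cover then gives an orthogonal splitting
$$(H_2(\widetilde{E}_K), \lambda_K) \cong \bigl(\Lambda \otimes H_2(S^2 \times S^2),\ \Lambda \otimes Q_{S^2 \times S^2}\bigr) \oplus \bigl(H_2(\widetilde{E}_{K'}), \lambda_{K'}\bigr).$$
Here $K' \subset S^4$ is the corresponding 2-knot. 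The second summand is $\Lambda$-torsion, because $H_2$ of the infinite cyclic cover of a 2-knot exterior is torsion. So modulo torsion, $\lambda_K$ is the extended hyperbolic form: it has a free basis on which it is represented by the constant matrix $\begin{pmatrix}0&1\\1&0\end{pmatrix}$.

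\textbf{The example.} Start with the oppositely oriented parallel spheres $S^2 \times \{p\}$ and $-S^2 \times \{q\}$ and tube them together. Choose the tube so that it runs around the dual sphere $\{x\} \times S^2$, or more flexibly, so that a dual class meets the resulting sphere algebraically zero but geometrically in a twisted pattern. The resulting $K$ is null-homologous, and hence null-homotopic, since $S^2 \times S^2$ is simply connected and $\pi_2 \cong H_2$. I would draw $K$ and $E_K$ in a Kirby diagram, with $K$ as a ribbon-type surface so that the exterior gets a dotted-circle handle. From this I would read off $H_2(\widetilde{E}_K)$ modulo torsion and the matrix of $\lambda_K$. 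The aim is a form like $\begin{pmatrix}0&1\\1&(1-t)(1-t^{-1})\end{pmatrix}$, or one involving a nontrivial power $t^k$, and more generally one not isometric to the extended hyperbolic form.

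\textbf{Main obstacle.} The hardest step is proving that the computed form is genuinely not extended from $\BZ$, since a change of $\Lambda$-basis can hide $t$-dependence. I would first try specializing. One option is evaluating at $t=-1$ to obtain an integral form on the double branched cover and comparing its parity or discriminant with the hyperbolic one. Another is tensoring with $\BZ/p[t^{\pm 1}]$ and comparing isometry classes of the free parts, for instance via the parity of the diagonal entry modulo $(1-t)$, or via the Blanchfield-type boundary form. If these invariants disagree with $H$, then $K$ cannot be topologically isotoped, or even topologically moved by any homeomorphism, into a 4-ball.
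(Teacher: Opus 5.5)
\textbf{There is a genuine gap: no example is produced, and the invariant you chose appears unable to detect the spheres in question.} Your local analysis is correct: for $K$ inside a ball, the free part of $H_2(\widetilde{E}_K)$ with $\lambda_K$ is $\Lambda\otimes H$, since $H_2$ of a 2-knot's infinite cyclic cover is $\Lambda$-torsion.

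\textbf{The proposed example is unknotted.} The tube sum of $S^2\times\{p\}$ and $-S^2\times\{q\}$ is determined by the isotopy class of the guiding arc. The complement of the two spheres has $\pi_1\cong\BZ$, generated by a meridian, so all such arcs are homotopic, hence isotopic in dimension four. The result therefore bounds a $3$-ball. The ``more flexible'' version is not specified at all.

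\textbf{The target forms would obstruct nothing.} If $\lambda(e,e)=0$, $\lambda(e,f)=1$ and $\lambda(f,f)=(1-t)(1-t^{-1})$, then $e,\ f-(1-t)e$ is a hyperbolic basis. More generally, any $\begin{pmatrix}0&1\\1&a\end{pmatrix}$ with $a=\bar a$ and $a(1)$ even is extended hyperbolic: choose $c$ with $c+\bar c=a$ and replace $f$ by $f-ce$. Off-diagonal units $t^k$ are absorbed by rescaling. So the ``main obstacle'' is not a technicality. You would need a sphere whose form is genuinely non-extended, and you give no reason one exists.

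\textbf{Your invariant seems blind to the relevant spheres.} The paper's (Lee's) obstruction is $H_2(\pi_1(S^2\times S^2\backslash K))$, which you should use instead of the abelian cover.
\begin{itemize}
\item For a local $K$, van Kampen shows the group is a 2-knot group $G$. Hopf's theorem then makes $H_2(G)$ a quotient of $H_2(S^4\backslash K')=0$.
\item The construction the paper points to (compare Proposition~\ref{propC}) gives complements with group $G/\langle h\rangle$, where $1\neq h\in Z(G)\cap[G,G]$. The five-term exact sequence gives $H_2(G/\langle h\rangle)\twoheadrightarrow\langle h\rangle\neq 0$.
\end{itemize}
Now suppose such a group is realized in the natural way, by surgering a circle $\gamma$ representing $h$ in a 2-knot exterior.
\begin{itemize}
\item Centrality gives $(t-1)[h]=0$ in the Alexander module. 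There $t-1$ is invertible, by the Milnor sequence and $H_2(S^4\backslash K')=0$, so $[h]=0$.
\item Hence the lifts of $\gamma$ are null-homologous in the infinite cyclic cover.
\item Modulo torsion, the surgery then contributes exactly a summand $\begin{pmatrix}0&1\\1&a\end{pmatrix}$ with $a(1)$ even. This is the extended hyperbolic form, the same answer as for a local sphere.
\end{itemize}
The class responsible for $H_2(\pi_1)\neq 0$ does lift to the infinite cyclic cover. What distinguishes it is that it is not spherical, and that is non-abelian information $\lambda_K$ over $\Lambda$ cannot see.
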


The obstruction simply uses the fundamental group: Lee constructs the spheres $K$ so that $H_2(\pi_1(S^2 \times S^2 \backslash K)) \neq 0$, whereas a short homological argument shows that local knots must have trivial second homology of the fundamental group of the complement. 

Perhaps somewhat surprisingly, the same obstruction works for embeddings of contractible 4-manifolds. In particular, Lee's ideas and Theorem~\ref{Lickorish} readily give the following analog. Let $W$ be either $S^2 \times S^2$ or $\BC \BP^2 \# \overline{\BC \BP^2}$, and let $G$ be a perfect group with a balanced presentation such that the intersection $H$ of its center and commutator subgroup is nontrivial. 

\begin{prop}\label{propC}
There exists a smooth, contractible 4-manifold $X \in \A$, and for each $h \in H$ an embedding $\varphi_h : X \hookrightarrow W$ with $\pi_1(W \backslash \varphi_h(X)) = G/\langle h \rangle$. It follows that $X$ cannot be isotoped to lie inside any contractible 4-manifold $C \subset W$ satisfying $\pi_1(W \backslash C) = 1$.
\end{prop}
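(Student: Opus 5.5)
We begin with Lickorish's Theorem~\ref{Lickorish}: it gives $X = X_G \in \A$ and an embedding $\psi : X \hookrightarrow S^4$ with $\pi_1(S^4 \backslash \psi(X)) = G$. Next we import Lee's construction. Lee builds his spheres in $W$ by starting from a 2-knot in $S^4$ with group $G$ and forming a connected sum with $W$ along a curve, then twisting. Twisting is possible precisely because $h$ is central: $W$ is viewed as obtained from $S^4$ by surgery along a circle (for $S^2 \times S^2$, with the even framing; for $\BC \BP^2 \# \overline{\BC \BP^2}$, with the odd framing). We perform the same operation here. Choose a loop $\gamma \subset S^4 \backslash \psi(X)$ representing $h \in G$, and do surgery on $\gamma$ with the appropriate framing. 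The result is $S^4$ surgered along a null-homotopic circle in $S^4$ (since $\gamma$ is trivial in $\pi_1(S^4)$), so the ambient manifold is $S^2\times S^2$ or $\BC\BP^2\#\overline{\BC\BP^2}$, depending on the framing. Thus $W$ is obtained from $S^4$ by surgery on the loop $\gamma$, and $X$ sits in it via $\varphi_h$. By van Kampen, the complement has $\pi_1 = G/\langle\langle h\rangle\rangle = G/\langle h\rangle$, since $h$ is central. We must also check that $\gamma$ is unknotted in $S^4$, which holds because homotopy implies isotopy for curves in dimension four. This identifies the ambient manifold with $W$ and not some exotic copy.

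For the obstruction, the paper's stated goal is that $X$ cannot be isotoped into any contractible $C \subset W$ with $\pi_1(W\backslash C)=1$. Suppose $\varphi_h(X) \subset C$ after isotopy. Then $W\backslash \varphi_h(X) = (W \backslash C) \cup (C \backslash \varphi_h(X))$, glued along the homology sphere $\partial C$. We follow Lee's homological argument with Mayer--Vietoris, or Hopf's formula, to show that $H_2$ of the fundamental group of the complement must vanish. Write $\Gamma = \pi_1(W\backslash \varphi_h(X))$. Van Kampen with $\pi_1(W \backslash C)=1$ shows that $\Gamma$ is a quotient of $\pi_1(C\backslash X)$ by the normal closure of the image of $\pi_1(\partial C)$. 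Because $C$ is contractible, the pair $(C \backslash X, \partial C \sqcup \partial X)$ gives $H_2(C\backslash X)=0$, and more precisely $C \backslash X$ is a homology cobordism rel the relevant boundaries. Gluing in $W\backslash C$ contributes all of $H_2(W)$, and all of that homology is spherical because $W\backslash C$ is simply connected. By Hopf, $H_2(\Gamma)$ is the quotient of $H_2$ of the complement by spherical classes. These spherical classes exhaust the $H_2(W)$ contribution, which yields $H_2(\Gamma)=0$.

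On the other side, $H_2(G/\langle h\rangle) \neq 0$. The central extension $\langle h\rangle \to G \to G/\langle h\rangle$ with $h\in[G,G]$ gives, by the five-term exact sequence $H_2(G)\to H_2(G/\langle h\rangle)\to \langle h\rangle/[G,\langle h\rangle] \to H_1(G)$, a surjection onto $\langle h \rangle \neq 0$. Here we use that $G$ is perfect, so $H_1(G)=0$, and that $h$ is central, so $[G,\langle h\rangle]=1$. This is the contradiction, and it also applies to a topological isotopy.

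The main obstacle is the homological step: carefully showing that $H_2$ of the complement is generated by spherical classes when $X \subset C$ and $\pi_1(W\backslash C)=1$. We would first try to mirror Lee's proof verbatim, replacing his 4-ball by $C$ and using only that $C$ and $X$ are acyclic.
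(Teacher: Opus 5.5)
Your proposal is correct and follows the same route the paper sketches: Lickorish's embedding with complement group $G$, then Lee's surgery on a loop in the complement representing the central element $h$ (the two framings give $S^2\times S^2$ and $\BC\BP^2\#\overline{\BC\BP^2}$), then the obstruction $H_2(G/\langle h\rangle)\neq 0$ against the vanishing of $H_2$ of the complement's fundamental group whenever $\varphi_h(X)\subset C$. The step you flag as the main obstacle is already settled by your own argument (Mayer--Vietoris along the homology sphere $\partial C$ gives $H_2(W\backslash\varphi_h(X))\cong H_2(W\backslash C)\oplus H_2(C\backslash \varphi_h(X))$, where the second summand vanishes and the first is spherical by Hurewicz), and you should state explicitly that $h\neq 1$, since otherwise the five-term sequence gives nothing.
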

One simply begins with an embedding $\varphi' : X \hookrightarrow S^4$ with $\pi_1(S^4 \backslash \varphi'(X)) = G$ coming from Theorem~\ref{Lickorish}, and proceeds as in \cite{Lee} to obtain the required embeddings $\varphi_h : X \hookrightarrow W$. It can be checked that Lee's idea also works for $\BC \BP^2 \# \overline{\BC \BP^2}$, and that the obstruction applies to every such $C$, not only the 4-ball.
When $G$ is obtained from a knot group $\pi_1(S^3 \backslash K)$ by adding a single relation, $X$ can be chosen to be Mazur, as in \cite{Livingston}. Similarly, one may apply Livingston's other observations to this setting. 

Note that this knotting of $X \in \A$ is global both topologically and smoothly, similarly to the knotted embeddings from Theorem~\ref{Lickorish}. One can ask for embeddings $X \hookrightarrow W$ which are smoothly global but topologically local. As far as we know, Corollary~\ref{cor2} provides the first such example, though the ambient manifold is not closed. We conclude with the following two questions.

\begin{quest}
    Do there exist a closed, definite 4-manifold $W$ and an embedding of a contractible 4-manifold $X \hookrightarrow W$ that cannot be isotoped to lie inside a 4-ball?
\end{quest}

\begin{quest}
    Do there exist a smooth, closed 4-manifold $W$ and an embedding of a contractible 4-manifold $X \hookrightarrow W$ that is smoothly global but topologically local?
\end{quest}

\subsection{Links in larger 4-manifolds}\label{2.4}

Given a smooth, compact, connected, and oriented 4-manifold $W$ and 4-manifolds $X_i \in \A$, we can again choose disjoint 4-balls $B_i \subset W$ and define an \textit{unlink}
$$L_0 : X_1 \sqcup \dots \sqcup X_m \hookrightarrow B_1 \sqcup \dots \sqcup B_m \subset W.$$
As one might expect, the topological unlink is once again detected by the fundamental group.
\begin{prop}\label{biglink}
Let $W$ be a smooth, closed, simply connected 4-manifold and let $X_i \in \A$. If a smooth link $L : X_1 \sqcup \cdots \sqcup X_m \hookrightarrow W$ satisfies $\pi_1(W \backslash L) = 1$, then it is topologically isotopic to the unlink $L_0$. 
\end{prop}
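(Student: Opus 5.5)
We follow the template of Proposition~\ref{toplink}, replacing Freedman's classification of contractible pieces with Boyer's classification \cite{Boyer1}, as in Proposition~\ref{bigknot}.

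\textbf{Step 1: reduce to a connected boundary.} Set $W_1 \coloneq W \backslash L$ and $W_0 \coloneq W \backslash L_0$. Their boundaries are $\sqcup_i Y_i$ with $Y_i = \partial X_i$, each a homology 3-sphere. Both complements are simply connected: for $W_1$ this is the hypothesis, and for $W_0$ it follows from van Kampen, since each $B_i \backslash X_i$ is simply connected. By Mayer--Vietoris, using that the $X_i$ are contractible with homology-sphere boundaries, the inclusions induce isomorphisms
$$H_2(W_1) \cong H_2(W) \cong H_2(W_0)$$
respecting intersection forms, and so both forms are isomorphic to $Q_W$. Following \cite[Construction~3.5]{Orson-Powell}, choose disjoint properly embedded arcs $\gamma_i' \subset W_1$ and $\gamma_i \subset W_0$ joining $Y_i$ to $Y_{i+1}$, with matching endpoints. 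Removing their neighborhoods gives $W_1'$ and $W_0'$ with connected boundary $Y = Y_1 \# \cdots \# Y_m$. Removing arcs, which have codimension three, changes neither $\pi_1$ nor $H_2$ or the intersection form, so $W_1'$ and $W_0'$ are simply connected smooth 4-manifolds with the same homology-sphere boundary and isomorphic forms $Q_W$.

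\textbf{Step 2: build a homeomorphism of pairs.} Apply \cite{Boyer1} to get a homeomorphism $W_1' \to W_0'$ restricting to the identity on $Y$. Smoothness means both Kirby--Siebenmann invariants vanish, which is exactly the remark made in Proposition~\ref{bigknot}. Extend this map by the identity across the arc neighborhoods $\gamma \times B^3$ to get $W_1 \approx W_0$, and then across each $X_i$ to get a homeomorphism of pairs
$$\Phi_0 : (W, L) \xrightarrow{\approx} (W, L_0).$$

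\textbf{Step 3: convert to an isotopy.} The sets $B_i' \coloneq \Phi_0^{-1}(B_i)$ are disjoint topological balls containing $L(X_i)$. If $\Phi_0$ is isotopic to some $\Phi_1$ that is the identity on $\cup_i B_i'$, then $\Phi_t \circ L$ is the required topological isotopy from $L_0$ back to $L$.

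\textbf{Main obstacle.} The delicate point is Step~3, because unlike $S^4$ the manifold $W$ has nontrivial mapping class group. We first isotope $\Phi_0$ so that it is the identity on each $B_i'$. This uses the topological disc theorem and the isotopy extension for locally flat balls. The resulting $\Phi_1$ need not be isotopic to the identity, but that does not matter: we only need $\Phi_1$ to fix $L$ pointwise, and $\Phi_t$ is an isotopy through homeomorphisms connecting $\Phi_0$ to $\Phi_1$. Hence $\Phi_1 \circ L = L$ and $\Phi_0 \circ L = L_0$, so $L$ and $L_0$ are isotopic.

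A second point to check is the boundary condition in \cite{Boyer1}. The homeomorphism must extend the identity on $Y$, which requires the isomorphism of forms to be compatible with the boundary. Since $Y$ is a homology sphere this is automatic: any isometry of $Q_W$ is realized.
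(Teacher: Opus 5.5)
Your proof is correct and follows essentially the same route as the paper. You delete arcs to get connected boundary $Y_1 \# \cdots \# Y_m$, apply Boyer's theorem using smoothness to handle the Kirby--Siebenmann invariant, extend across the arc neighborhoods and the $X_i$ to get a homeomorphism of pairs, and then isotope it to fix the balls $\Phi_0^{-1}(B_i)$. The only difference is that the paper first records, via Proposition~\ref{bigknot}, that each component is individually topologically standard, and that step is not actually needed for the conclusion.
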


\begin{proof}
We simply combine the arguments from Propositions~\ref{toplink} and~\ref{bigknot}.
Since $W$ is simply connected, the condition $\pi_1(W \backslash L) = 1$ implies that $\pi_1(W \backslash L(X_i)) = 1$ for each $i$, as in Proposition~\ref{toplink}. It then follows from Proposition~\ref{bigknot} that each restriction $L|_{X_i}$ is topologically standard. 

We compare the complements $W_1 = W \backslash L$ and $W_0 = W \backslash L_0$. As in the case of $S^4$, we delete $m-1$ properly embedded arcs $\gamma_i \subset W_0$ and $\gamma_i' \subset W_1$, obtaining simply connected, smooth 4-manifolds $W_0'$ and $W_1'$ with connected boundary $Y = \#_i (\partial X_i)$. Note that deleting these arcs does not affect the intersection form; thus $W_0'$ and $W_1'$ still have intersection forms isomorphic to the intersection form of $W$. This allows us to apply Boyer's theorem to find a homeomorphism 
$$\Phi_0' : (W_1', Y) \xrightarrow{\approx} (W_0', Y),$$
extending the identity map on $Y$. Gluing the neighborhoods $\nu(\gamma_i), \nu(\gamma_i')$ and the $X_i$ back in, and extending $\Phi_0'$ by identity, we obtain a homeomorphism of pairs
$$\Phi_0 : (W, L) \xrightarrow{\approx} (W, L_0).$$ 

Finally, we upgrade the topological equivalence to a topological isotopy using the same strategy. Choose 4-balls $B_i \subset W$ containing $L_0(X_i)$, pull them back using $\Phi_0$ to obtain 4-balls $B_i' \subset W$ containing $L(X_i)$, and isotope $\Phi_0$ to a homeomorphism $\Phi_1$ that fixes each $B_i'$. Then $\Phi_t \circ L$ is a topological isotopy from $L_0$ to $L$.
\end{proof}

\section{2-component links in \texorpdfstring{$S^4$}{S4}}\label{3}
We begin by unpacking Definition~\ref{ribbon}. The Kirby diagram of $S^1 \times B^3$ consisting of a dotted unknot provides a surgery diagram for $S^1 \times S^2$. A Mazur manifold $M$ is uniquely determined by its 2-handle, which is attached along a framed knot $K \subset S^1 \times S^2$ with winding number 1.

Assume we have a Mazur manifold $M \in \M$ given by such a knot $K$, and a ribbon concordance $C \subset S^1 \times S^2 \times I$ from $K \subset S^1 \times S^2 \times \{0\}$ to some knot $K' \subset S^1 \times S^2 \times \{1\}$. Since $K$ and $K'$ are concordant, they must be freely homotopic; thus $K'$ also has winding number 1. 
The neighborhood of $C \cong S^1 \times I$ is diffeomorphic to $\nu(C) \cong S^1 \times B^3$; however, this diffeomorphism is not unique and corresponds to a trivialization of the normal bundle of $C$, which is described by an integer. 
Choose the unique trivialization of the normal bundle of $C$ inducing the correct framing on $K$. This uniquely determines a framing of the knot $K'$, which is what we mean when we say $C$ preserves the framing in Definition~\ref{ribbon}.

Such a concordance $C$ is \textit{ribbon} if the projection $S^1 \times S^2 \times I \rightarrow I$ restricts to a Morse function on $C$ with no index-2 critical points. After an isotopy of $C$, one may assume that all index-0 critical points appear below the index-1 critical points. As a result, the knot $K'$ can be obtained as follows: start with the knot $K \subset S^1 \times S^2$; add $m$ small unknots $x_i$ unlinked from $K$ and each other; add $m$ bands along paths $\gamma_i$ from $K$ to $x_i$, and perform band surgery along them. These bands correspond to the index-1 critical points of the Morse function and may loop around the $S^1$-factor of $S^1\times S^2$. Note that, a priori, some of these bands may connect two unknots $x_i$ and $x_j$ instead of $K$, but we can make sure this does not happen by another isotopy of $C$. Also note that a path $\gamma_i$ might go through the band associated to $\gamma_j$ if $\gamma_i$ appears later. However, by isotoping $\gamma_i$ across $\gamma_j$, it can be made to go through $x_j$ instead. This allows us to assume that the bands are disjoint from each other. With this description of $C$ in mind, we proceed towards a proof of Theorem~\ref{thm1}.\\

Let $K, K', C, M$, and $M'$ be as in the statement of the theorem and let $k \in \BZ$ be the framing of $K$. This framing can be made canonical by using the dotted unknot diagram for $S^1 \times S^2$; moreover, since $C$ preserves the framing, $K'$ must also have framing $k$. We perform $k$-framed surgery on the concordance $C$ to obtain a ribbon homology cobordism $W_C$ from $\partial M$ to $\partial M'$ as follows. 

The neighborhood of $C \cong S^1 \times I$ can be identified as $\nu(C) \cong S^1 \times I \times D^2$. 
Let $\phi : S^1 \times S^1 \rightarrow S^1 \times S^1$ be the map associated to a Dehn surgery with coefficient $k$, where the second $S^1$ factor is treated as a meridian. Multiplying $\phi$ by the identity map $\Id : I \rightarrow I$, we obtain a diffeomorphism $\Phi : S^1 \times S^1 \times I \rightarrow S^1 \times S^1 \times I$. Define $W_C$ as

$$W_C \coloneq S^1 \times S^2 \times I \backslash \nu(C) \cup_{\Phi} S^1 \times D^2 \times I.$$

Note that $W_C$ has two boundary components, which are obtained as $k$-surgery on the knots $K$ and $K'$, respectively. It follows that $W_C$ is a cobordism from $\partial M$ to $\partial M'$, where we choose the convention that a cobordism from $Y$ to $Y'$ has oriented boundary $-Y \sqcup Y'$. While $W_C$ must be a \textit{homology} cobordism for algebraic reasons, even if $C$ is not ribbon, we instead prove this by describing its Kirby diagram, which we use to show that the links $L_C$ are in $S^4$ and not a general homotopy 4-sphere.

\begin{prop}\label{propA}
Let $\Sigma \coloneq M \cup W_C \cup -M'$; then $\Sigma \cong S^4$.
\end{prop}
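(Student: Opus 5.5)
We build an explicit handle decomposition of $\Sigma$ from a Kirby diagram of $W_C$ and then cancel handles. The first step is to read off the handles of $W_C$, viewed as a cobordism from $\partial M$ to $\partial M'$, from the ribbon description of $C$ given above.

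Each index-$0$ critical point of $C$, which births a small unknot $x_i$, becomes a $1$-handle of $W_C$ after the $k$-surgery. Concretely, removing the neighborhood of a trivial disk near a local minimum and regluing amounts to attaching a $1$-handle, drawn as a dotted circle $x_i$ unlinked from everything. Each band along $\gamma_i$, an index-$1$ critical point, becomes a $2$-handle of $W_C$. It is attached along a $0$-framed meridian-type curve $\mu_i$ that encircles the band: it links $K$ and $x_i$ once, following the band, so it runs over the $1$-handle $x_i$ geometrically once. This is the standard picture of surgery on a ribbon concordance (compare \cite{Metatheorem}): $W_C$ consists of $m$ $1$-handles and $m$ $2$-handles attached to $\partial M \times I$. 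I will check this carefully with the framing convention on $\Phi$, but it is the usual dual description of a band move.

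Gluing on $M$ below, $M \cup W_C$ has the following diagram: the dotted circle of $M$, the $k$-framed $K$, the dotted circles $x_i$, and the $0$-framed curves $\mu_i$. Each $\mu_i$ passes geometrically once through $x_i$, so the pairs $(x_i,\mu_i)$ cancel; the only linking of $\mu_i$ with $K$ is irrelevant after sliding. After cancellation the diagram becomes the original band-surgered picture. In other words, $M \cup W_C$ is a $4$-manifold with the same boundary as $M'$, and tracing the cancellation shows that $M\cup W_C$ is obtained from $M'$ by attaching $m$ cancelling pairs. Indeed, turning $W_C$ upside down gives $2$-handles and $3$-handles on $\partial M'$, and $M \cup W_C \cong M'$ after the $1$--$2$ cancellations. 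It should then follow that $\Sigma = (M\cup W_C)\cup -M' \cong M' \cup -M'$, and this double is $S^4$ by the argument of Section~\ref{2.1}, since $M' \in \M\subset\A$.

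The main obstacle is making rigorous the identification $M\cup W_C \cong M'$ rel boundary, or at least compatibly with the gluing to $-M'$. Cancelling $x_i$ against $\mu_i$ replaces each strand of $K$ passing near the band by a strand following the band, and I must verify that the result is exactly $K'$ with framing $k$ inside the dotted-circle diagram of $S^1\times S^2$. Bands that wrap around the $S^1$-factor pass over the dotted circle of $M$, which is harmless because that circle is not cancelled.

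If this identification proves delicate, I will use a fallback: write $\Sigma$ directly as one $0$-handle, $m+1$ $1$-handles, $m+1$ $2$-handles, the dual $2$- and $3$-handles from $-M'$, and a $4$-handle. I then cancel the $x_i$ against the $\mu_i$ and cancel the $1$-handle of $M$ against $K$, reducing to the diagram of the double of $M'$. That diagram is $S^4$ because the $2$-handle of $M'$ is homotopic, hence isotopic in the $5$-dimensional thickening, to the core of its $1$-handle.
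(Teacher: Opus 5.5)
There is a genuine gap at the central step, the identification $M\cup W_C\cong M'$.

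\textbf{The cancellation step fails.} You correctly describe the $2$-handle $y_i$ of $W_C$ (your $\mu_i$) as the band sum of meridians of $K$ and $x_i$ along $\gamma_i$. However, it does not run geometrically once over $x_i$. It runs twice along $\gamma_i$, and $\gamma_i$ may pass through the discs $\Delta_j$ spanned by the dotted circles $x_j$, including $x_i$ itself. Some band must do this whenever $K'$ is not isotopic to $K$. If no band met any $\Delta_j$, then $K'$ would be isotopic to $K$ across the embedded discs $\Delta_j\cup(\text{band})$. So in general the pairs $(x_i,y_i)$ cancel only homotopically.

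\textbf{Cancellation would not give $M'$ anyway.} Even when a pair does cancel geometrically, erasing it leaves $x$ and $K$ untouched, because $K$ does not run over the small circle $x_i$. Cancellation does not perform the band move. Your argument would therefore yield $M\cup W_C\cong M$, hence $\partial M'\cong\partial M$, which is false for general ribbon pairs.

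\textbf{Your intermediate goal is too strong.} A diffeomorphism $M\cup W_C\cong M'$ compatible with the gluing would say that $L_C|_{M'}$ is smoothly standard. That is stronger than the proposition, and the paper does not establish it; it suggests it may fail in many cases.

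\textbf{The fallback does not repair this.} It inherits the problems above. It also cancels the $1$-handle of $M$ against $K$, which is impossible since $K$ is only homotopic to the core (otherwise $M\cong B^4$). Most importantly, it never says where the $2$-handle of $-M'$ is attached in the diagram of $M\cup W_C$, and that is the real difficulty.

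\textbf{The missing idea.} You need to identify the gluing map $\partial(M\cup W_C)\to\partial M'$ just enough to draw that $2$-handle, and then use it to unlink the $y_i$ from $K$. The paper does this as follows.
\begin{enumerate}
\item It performs a zero-dot exchange on all pairs $(x_i,y_i)$. This is possible because the $y_i$ bound disjoint discs $D_i$, each made of a meridian disc of $K$, the band along $\gamma_i$, and a meridian disc of $x_i$.
\item After the exchange, each new $2$-handle $x_i'$ meets $D_i$ once. To cancel it, $y$ must be slid over $x_i'$ along $\gamma_i$, which is exactly the band move, so the exchanged manifold is $M'$. This changes the $4$-manifold and serves only to identify boundaries.
\item Since no handle was slid over $y$, the meridian $\mu_{K'}$ corresponds to $\mu_K$. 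Hence $\Sigma$ is the diagram of $M\cup W_C$ together with a $0$-framed $2$-handle $z$ along $\mu_K$, plus $3$- and $4$-handles.
\item Sliding each $y_i$ over $z$ frees it from $K$. Then $y_i$ bounds a disc met only by $x_i$, once, so $y_i$ is isotopic to a meridian of $x_i$ and the pair cancels.
\item What remains is $x,y,z$, the double of $M$, which is $S^4$.
\end{enumerate}
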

\begin{proof}

The projection $\rho : S^1 \times S^2 \times I \rightarrow I$ produces a relative handle decomposition of the pair $(S^1 \times S^2 \times I, S^1 \times S^2 \times \{0\})$ with no critical points. We investigate the complement $S^1 \times S^2 \times I \backslash \nu(C)$ using the techniques from \cite[Chapter~6.2]{Gompf-Stipsicz}. After isotopy, we can assume $\rho$ restricts to a Morse function on $C$ with no local maxima, and with local minima and saddles as above. This yields a relative handle decomposition of the pair $$(S^1 \times S^2 \times I \backslash \nu(C), S^1 \times S^2 \backslash \nu(K)),$$ which can be described as follows.

We draw $S^1 \times S^2 \backslash \nu(K)$ as in Figure~\ref{fig2} (center), where the dotted line represents a deleted neighborhood. Each local minimum of $\rho|_C$ gives birth to a 1-handle represented by a dotted unknot, which we denote $x_i$ by a mild abuse of notation. A saddle move along a band $\gamma_i$ yields a 0-framed 2-handle $y_i$ in the following way.
By our assumption, the band $\gamma_i$ must connect $K$ to $x_i$. Letting $\mu_K$ and $\mu_i$ be the oriented meridians of $K$ and $x_i$, the attaching circle of $y_i$ is obtained by connecting $-\mu_K$ and $\mu_i$ using the band $\gamma_i$. Note that the band $\gamma_i$ may be linked with the dotted unknot $x$ defining $S^1 \times S^2$, as well as the deleted neighborhood of $K$. See Figure~\ref{fig3} (top left) for an example.

\begin{figure}[htbp]
  \centering
  \includegraphics[width=0.25\textwidth]{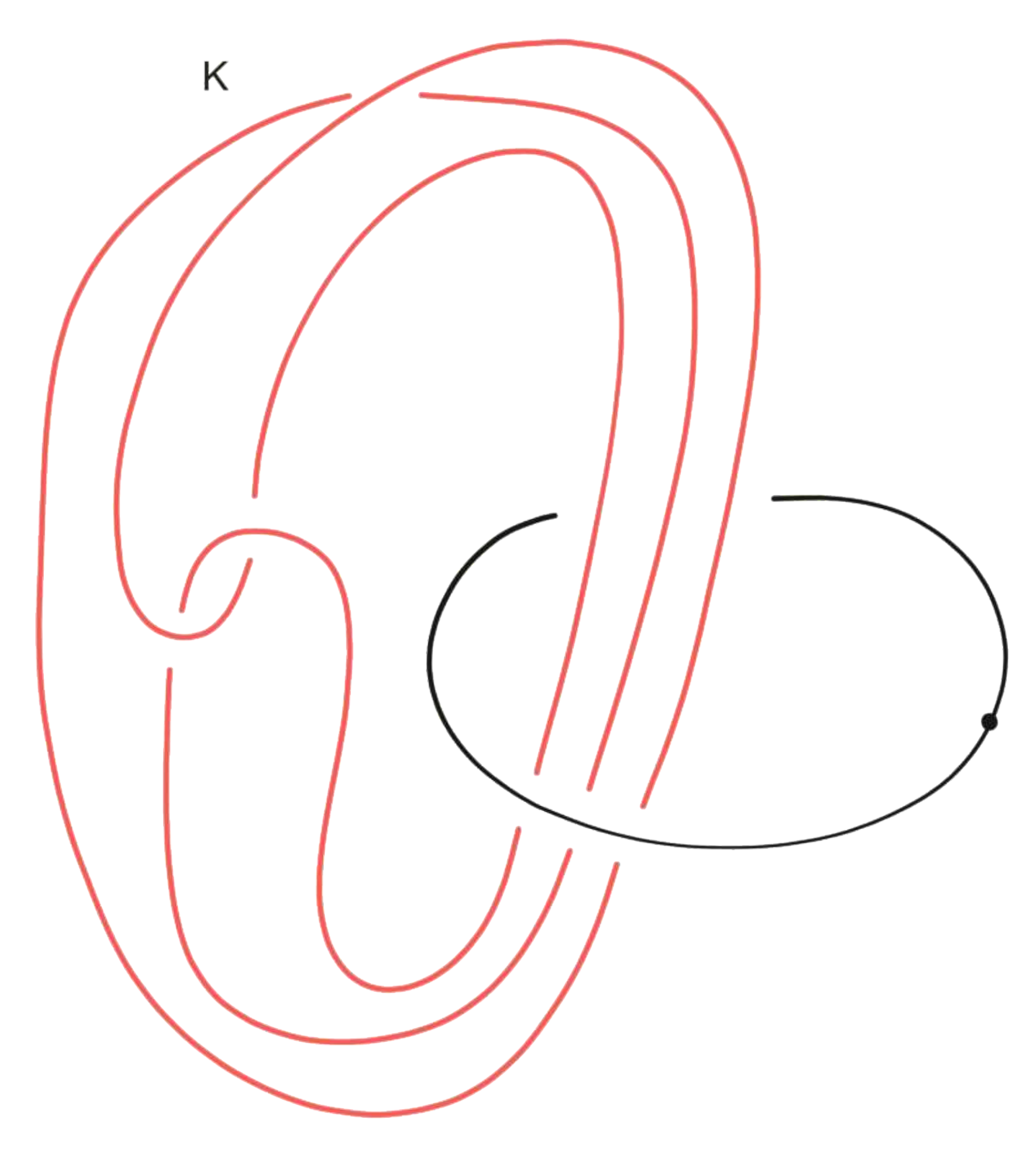} 
  \hspace{0.5 cm}
  \includegraphics[width=0.25\textwidth]{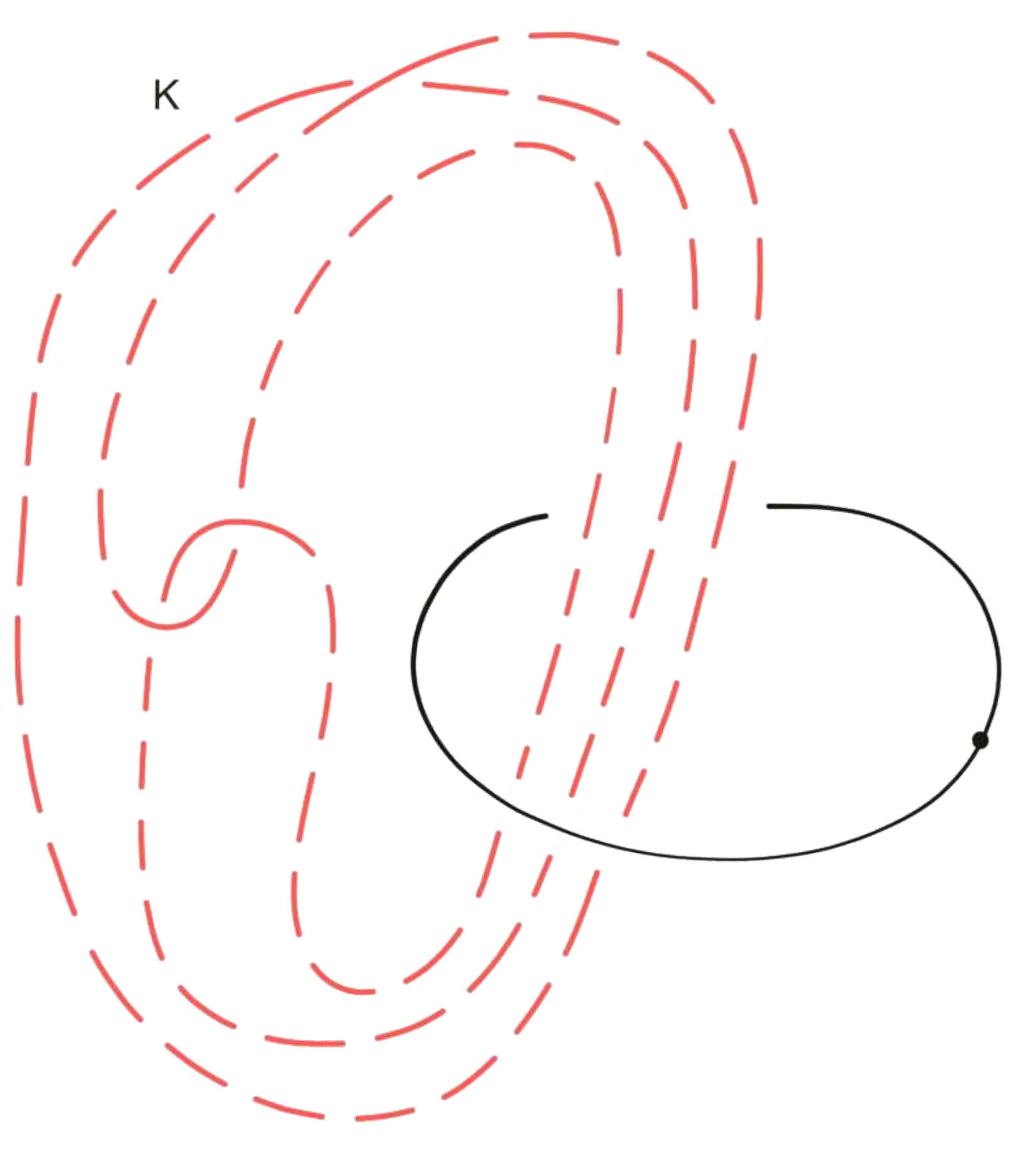}
  \hspace{0.5 cm}
  \includegraphics[width=0.27\textwidth]{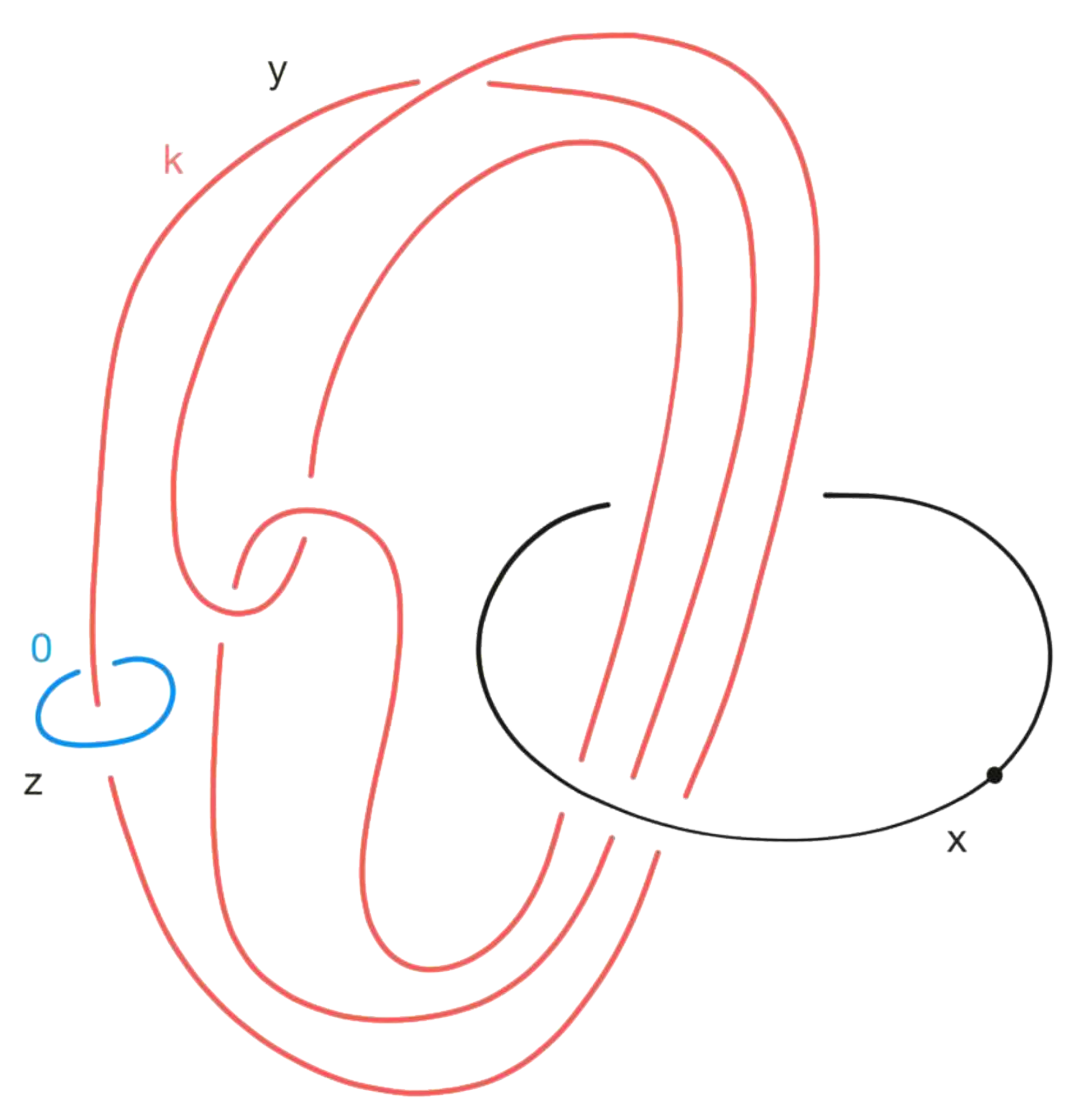}
  \caption{Left: an example of a knot $K \subset S^1 \times S^2$. Center: the complement $S^1 \times S^2 \backslash \nu(K)$. Right: a Kirby diagram of the double $M \cup -M = S^4$.}
  \label{fig2}
\end{figure}

\begin{figure}[htbp]
  \centering
  \includegraphics[width=0.42\textwidth]{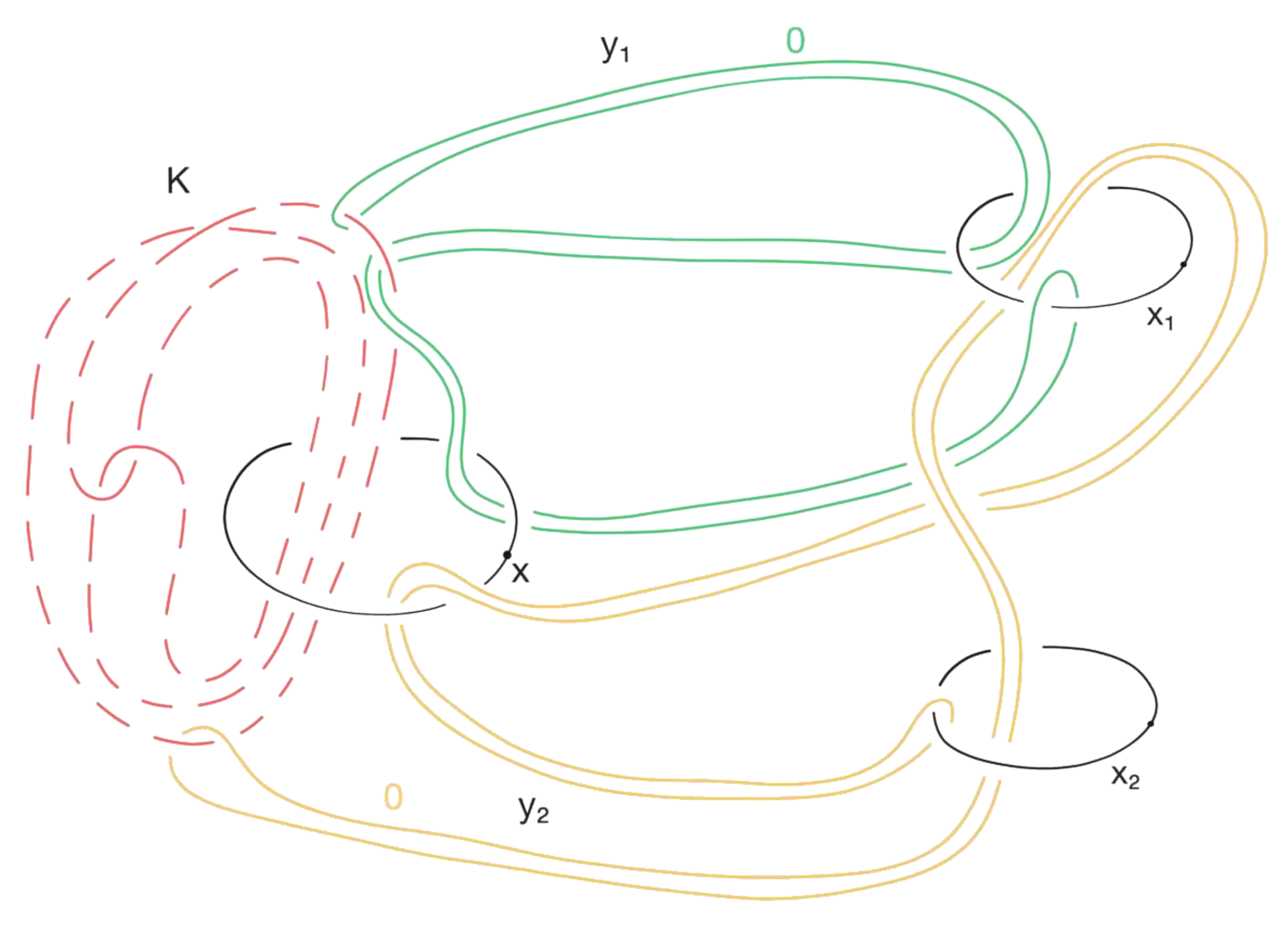} 
  \includegraphics[width=0.42\textwidth]{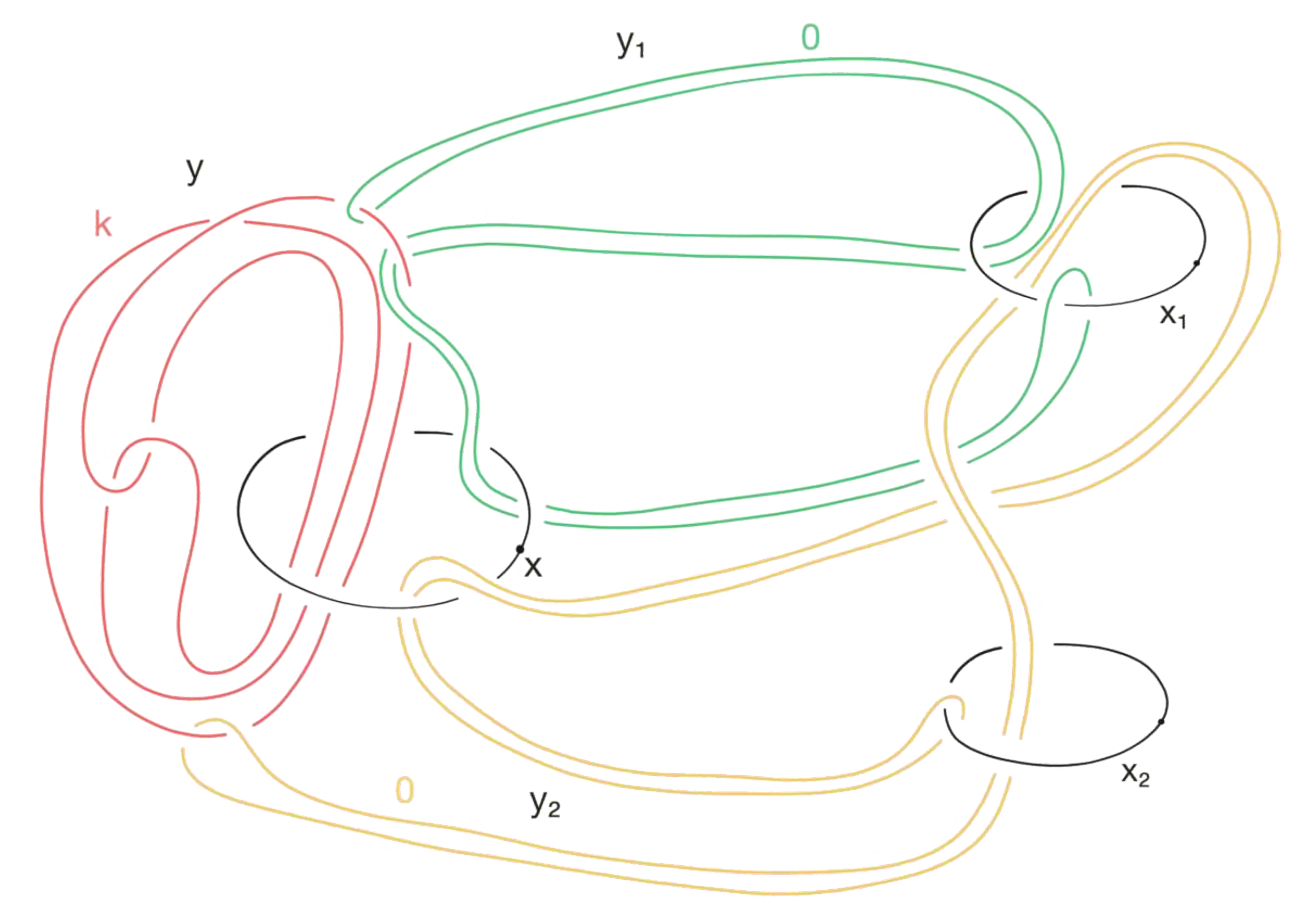}
  \includegraphics[width=0.42\textwidth]{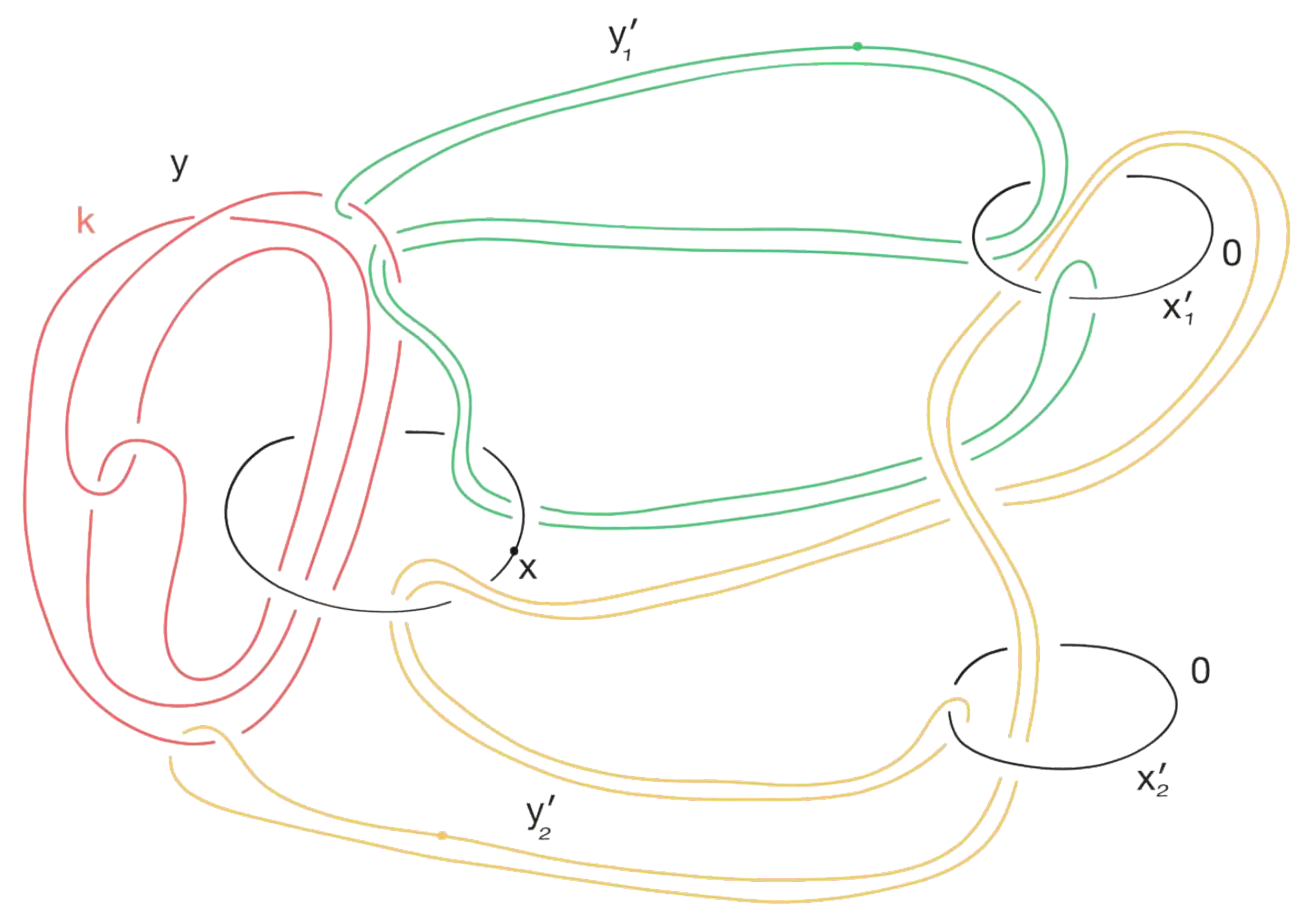} 
  \includegraphics[width=0.42\textwidth]{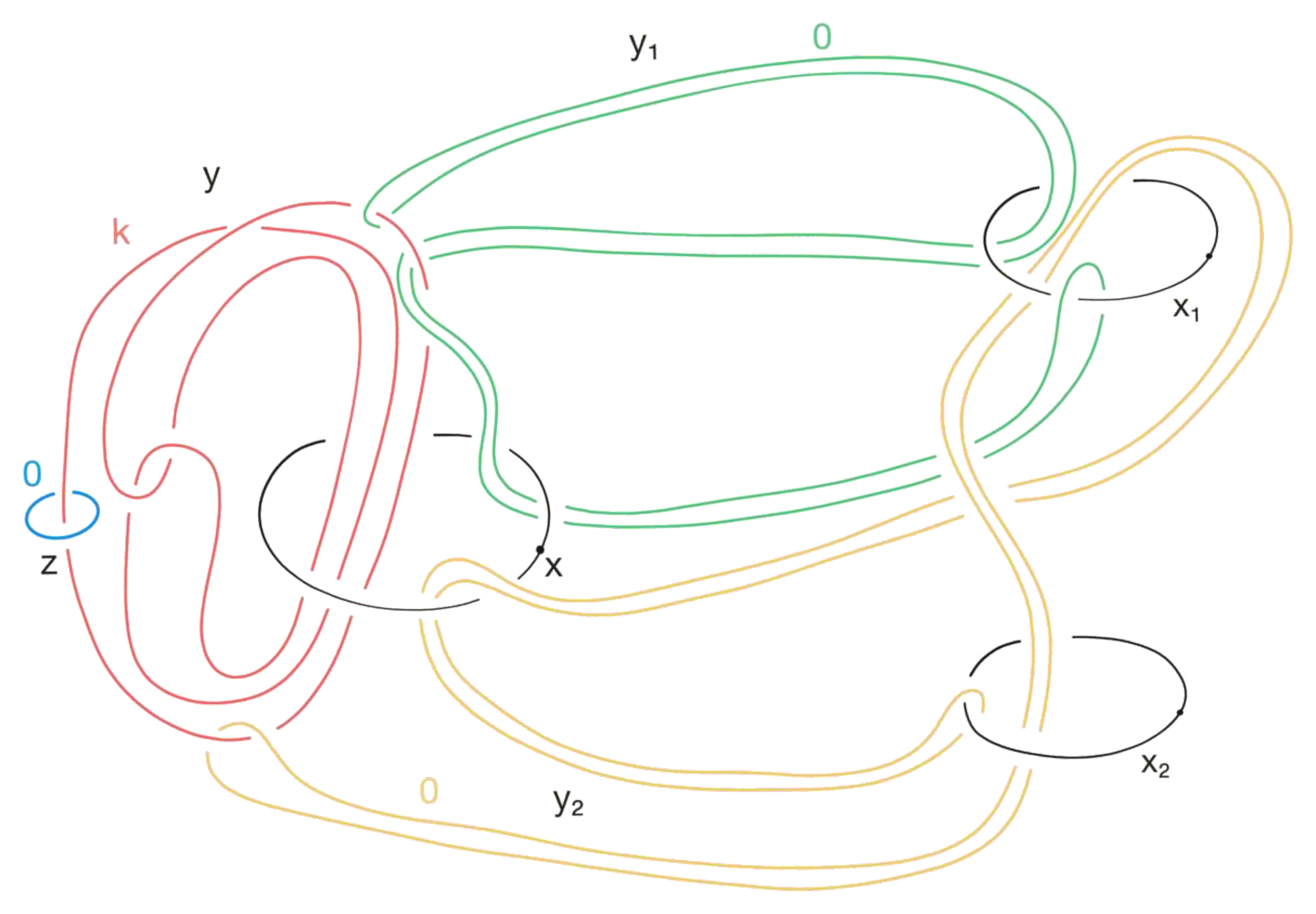}
  \caption{Top left: a relative Kirby diagram of $(S^1 \times S^2 \times I \backslash \nu(C), S^1 \times S^2 \backslash \nu(K))$. Top right: a Kirby diagram of $X = M \cup W_C$, and also of the pair $(W_C, \partial M)$ if we treat $x, y$ as a surgery diagram rather than handles. Bottom left: a Kirby diagram of $X'$, obtained from $X$ by a zero-dot exchange on the pairs $x_i, y_i$. Bottom right: a Kirby diagram of $\Sigma = M \cup W_C \cup -M'$.}
  \label{fig3}
\end{figure}

It remains to reglue the neighborhood $\nu(C) \cong S^1 \times D^2 \times I$ using the framing coefficient $k$. This corresponds to filling in the dotted line in the picture and writing $k$ next to it. We obtain a relative handle decomposition of $(W_C, (S^1 \times S^2)_k(K))$ as in Figure~\ref{fig3} (top right).
Since our surgery diagram of $(S^1 \times S^2)_k(K)$ is naturally identified with $\partial M(K)$, this also represents an absolute handle decomposition of $X \coloneq M \cup W_C$ if we treat $x$ as a 1-handle and let $y$ be a $k$-framed 2-handle attached along $K$. 
Note that since $\partial M$ is a homology sphere, the meridian $\mu_K$ is null-homologous, which readily implies that $W_C$ is a ribbon homology cobordism because every 2-handle $y_i$ homologically cancels the 1-handle $x_i$. 

Since $\partial X \cong \partial M'$, we can glue on a copy of $-M'$ to obtain $\Sigma = X \cup -M'$.
First, we observe that $\Sigma$ is a homotopy 4-sphere. Indeed, $\pi_1(X) = 1$ since each relation $y_i$ cancels the generator $x_i$, hence $\pi_1(X \cup -M') = 1$ by van Kampen, while the homology of $\Sigma$ is computed by a simple Mayer--Vietoris argument.

We claim that $\Sigma \cong S^4$, which provides a link
$$L_C : M \sqcup -M' \hookrightarrow \Sigma = S^4.$$
To show this, we must first understand the gluing map $\phi_C : \partial X \rightarrow \partial M'$.

Let $X'$ be given by the Kirby diagram of Figure~\ref{fig3} (bottom left), which is obtained from the diagram of $X$ (top right) by performing a zero-dot exchange on the pairs $x_i$ and $y_i$, resulting in 1-handles $y_i'$ and 2-handles $x_i'$. This is made possible by the observation that the $y_i$ form an unlink since each $y_i$ bounds a disc $D_i$ obtained by connecting the meridional discs of $K$ and $x_i$ by the band $\gamma_i$. 
The zero-dot exchange provides a boundary diffeomorphism $\partial M' = \partial X \xrightarrow{\cong} \partial X'$; we claim that $X' \cong M'$.

Note that while $y_i$ may intersect the disc bounded by $x_i$ multiple times, $x_i'$ intersects the disc $D_i$
bounded by $y_i'$ only once, which allows us to cancel the handle pair $y_i'$ and $x_i'$. However, before doing so, we must slide every other 2-handle that goes through $y_i'$ over $x_i'$, until no other 2-handle intersects the disc $D_i$. The only other 2-handle intersecting $D_i$ is $y$ (see picture); thus we must slide $y$ over $x_i'$. This slide is given precisely by the band $\gamma_i$, which means that after we complete the slides and cancellations, we are left with the 1-handle $x$ and a 2-handle $y'$ attached along the knot $K' \subset S^1 \times S^2$, as in Figure~\ref{fig4}.
Since this is the Kirby diagram of $M'$, we conclude that $X' \cong M'$.

\begin{figure}[htbp]
  \centering
  \includegraphics[width=0.56\textwidth]{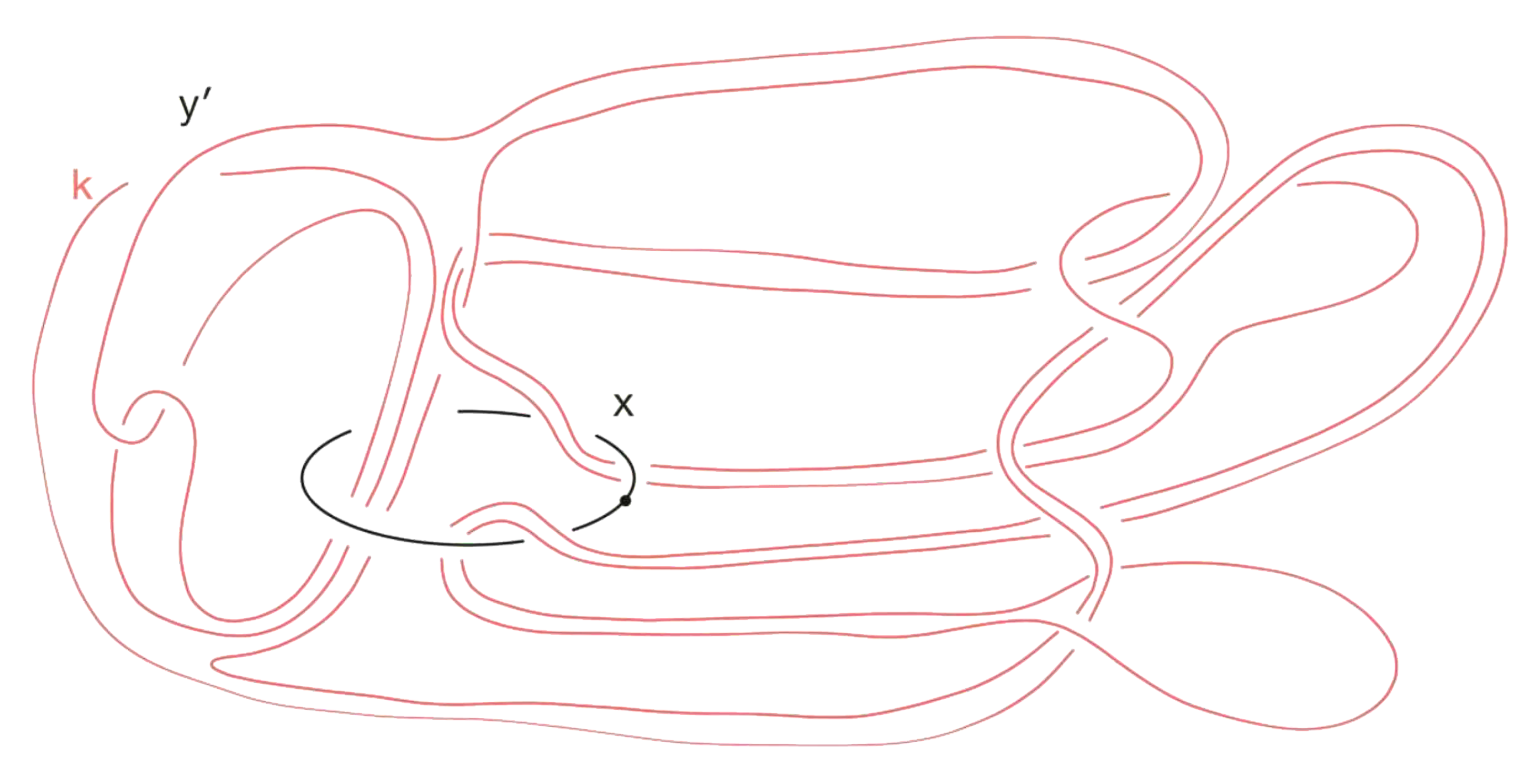}
  \caption{A Kirby diagram exhibiting $X'$ as the Mazur manifold $M'$.}
  \label{fig4}
\end{figure}

Finally, observe that in order to obtain the above diffeomorphism, we slid $y$ over other 2-handles, but never slid a 2-handle over $y$. This implies that the meridian $\mu_K$ of $y$ coincides with the meridian $\mu_{K'}$ of $y'$, or, in other words, $\mu_K$ is fixed by the boundary diffeomorphism $\partial X \rightarrow \partial M'$. It follows that a Kirby diagram of $\Sigma = X \cup -M'$ is obtained from the diagram of $X$ by simply attaching a 0-framed 2-handle $z$ along $\mu_K$; see Figure~\ref{fig3} (bottom right). We slide every 2-handle $y_i$ over $z$ so as to unlink it from $y$ at the meridian $\mu_K$. As a result, we can isotope $y_i$ across the band $\gamma_i$, which allows us to cancel $y_i$ with $x_i$. This leaves us with the handles $x$, $y$, and $z$, which form the Kirby diagram of the double of $M$, i.e., $S^4$ (see Figure~\ref{fig2} (right)). This concludes the proof and provides the link $L_C$. 
\end{proof}

\begin{proof}[Proof of Theorem~\ref{thm1}]
It remains to obstruct a splitting of $L_C$ by a topological $S^3$.

We begin by considering the induced embeddings of $M$ and $M'$. Since the complement $S^4 \backslash L_C(M') = X$ is contractible, it follows from Proposition~\ref{topknot} that the embedding $L_C|_{M'}$ is topologically standard. The embedding $L_C|_{M}$ is even simpler since, as we have seen above, $$M \cup W_C \cup -M' = M \cup_{\Id} -M.$$
It follows that the restriction $L_C|_M$ is not only topologically but also smoothly standard.  In contrast, the complement $X = S^4 \backslash L_C(M')$ is not, in any obvious way, diffeomorphic to $M'$; therefore one may expect the induced embedding of $M'$ to be exotic in many cases, but we have no way of detecting this in general.

So far we have seen that the individual components of the link $L_C$ are topologically unknotted. Together with the generalized Schoenflies theorem \cite{Schoenflies}, this implies that $L_C$ is topologically isotopic to the unlink $L_0 : M \sqcup -M' \hookrightarrow S^4$ (defined in Section~\ref{2.2}) if and only if $L_C$ is split by a topological $S^3$. We have seen earlier that the complement $W_0 = S^4 \backslash L_0$ is simply connected. On the other hand, the fundamental group of the complement $W_C =  S^4 \backslash L_C$ is seen to be nontrivial as follows. The cobordism $W_C$ is built from $\partial M$ by attaching 
the 1-handles $x_i$ and the 2-handles $y_i$. Since each $y_i$ introduces the relation $x_K = x_i$, where $x_K \in \pi_1(\partial M)$ is the element represented by $\mu_K$, we must have $\pi_1(W_C) = \pi_1(\partial M) \neq 1$. It follows that $L_C$ is not topologically isotopic to $L_0$, and thus it cannot be split by a topological $S^3$.      
\end{proof}

We observe that since $W_C$ is a ribbon homology cobordism from $\partial M$ to $\partial M'$, the results of \cite{Metatheorem} apply. In particular, their Theorem 1.4 implies that the cobordism maps $F_{W_C}^{\circ}$ include $HF^{\circ}(\partial M)$ into $HF^{\circ}(\partial M')$ as a direct summand, where $\circ \in \{+, -, \infty, \widehat{\phantom{x}}\}$. Here, $HF$ denotes Heegaard Floer homology with its usual flavors, and coefficients $\BZ_2$, as introduced by Ozsváth and Szabó in \cite{OS3}, while $F_{W_C}^{\circ}$ is the cobordism map defined in \cite{OS2}. If the cobordism $W_C$ is smoothly split by some homology 3-sphere $Z$, then the inclusion $F_{W_C}^{\circ}$ must factor through $HF^{\circ}(Z)$, which gives us inclusions $HF^{\circ}(\partial M) \hookrightarrow HF^{\circ}(Z)$ and $HF^{\circ}(Z) \hookrightarrow HF^{\circ}(\partial M')$. This implies that any homology sphere $Z$ smoothly splitting $L_C$ cannot be \enquote{smaller} than $\partial M$ or \enquote{larger} than $\partial M'$. In particular, it follows from Lemma~\ref{lem2} that $L_C$ is not smoothly split by $S^3$, independently of the topological obstruction. 

\begin{remark}\label{A2}
It turns out that if we relax the Mazur condition on one of the two manifolds, allowing it to be chosen from the second filtration level $\A^2$, we obtain a much simpler, more general construction. For arbitrary Mazur manifolds $M_1$ and $M_2$, the decomposition $S^4 = M_1 \natural M_2 \cup - (M_1 \natural M_2)$ determines a link
$$L : M_1 \sqcup -(M_1 \natural M_2) \hookrightarrow S^4,$$
whose complement is a ribbon homology cobordism consisting of the 1- and 2-handles from the Kirby diagram of $M_2$. Then the statement of Theorem~\ref{thm1} follows for $L$ analogously.
\end{remark}

\section{2-component links in definite 4-manifolds}\label{4}
We have seen that the links $L_C$ are not split even by a topological $S^3$, while the individual components are topologically unknotted. As part of our proof of Theorem~\ref{thm2}, we describe a general strategy for turning such links $L_C \subset S^4$ into topologically standard links $L_{C, n} \subset \#^n \BC \BP^2$. An important feature of this procedure is that the smooth obstruction survives this \enquote{blow-up}.

Fix $M, M', C$, and let $L_C : M \sqcup -M' \hookrightarrow S^4$ be the link from Theorem~\ref{thm1}. Each component of $L_C$ is topologically standard, while the inclusion $\partial M \hookrightarrow W_C = S^4 \backslash L_C$ induces an isomorphism $\pi_1(\partial M) \cong \pi_1(W_C)$. Our goal is to add 2-handles to $W_C$ so as to kill this fundamental group.

In fact, this can be accomplished by attaching a single 2-handle to $\partial M$ along the meridian $\mu_K$ of the 2-handle of $M$. To see this, recall that any Mazur manifold $M$ doubles to $S^4$: $M \cup -M = S^4$. A Kirby diagram for $M \cup -M$ is obtained from that of $M$ by adding a 0-framed 2-handle along $\mu_K$. Similarly, one obtains a relative Kirby diagram for $(-M, \partial M)$ by attaching a 0-framed 2-handle to $\partial M \times I$ along $\mu_K$, where we use the surgery diagram of $\partial M$ coming from $M$ and omit the 3- and 4-handles of $-M$. Since $-M$ is simply connected, this handle attachment must kill $\pi_1(\partial M \times I) = \pi_1(\partial M)$.

For now, assume $n < 0$, and choose a Kirby diagram of $\#^n \BC \BP^2$ consisting of $|n|$ unknots with framing $-1$. Adding these 2-handles to our picture of $L_C \subset S^4$, we obtain a \enquote{blown-up} link $$L_B : M \sqcup -M' \hookrightarrow \#^n \BC \BP^2;$$ see Figure~\ref{fig5} (left).

Next, we slide the 2-handle of $M \subset \#^n \BC \BP^2$ once over each of the $(-1)$-framed 2-handles. As a result, its framing increases by $n$, while the attaching circle becomes negatively linked with each of the $(-1)$-framed 2-handles; see Figure~\ref{fig5} (right). By choosing the same 0- and 1-handles from $M$, together with the new 2-handle, we obtain an embedding $M_n \hookrightarrow \#^n \BC \BP^2$. Since the handle slides above do not affect the handles of $-M' \subset \#^n \BC \BP^2$, this produces a link $$L_{C, n} : M_n \sqcup -M' \hookrightarrow \#^n \BC \BP^2.$$ 

We claim that $W_{C, n} = \#^n \BC \BP^2 \backslash L_{C, n}$ is simply connected. Indeed, the cobordism $W_{C, n}$ contains a 2-handle attached along the meridian $z_n$ of the 2-handle of $M_n$, which kills $\pi_1(\partial M_n)$, while each generator corresponding to $x_i$ is killed by the relation corresponding to $y_i$, as in the case of $W_C$.

\begin{figure}[htbp]
  \centering
  \includegraphics[width=0.42\textwidth]{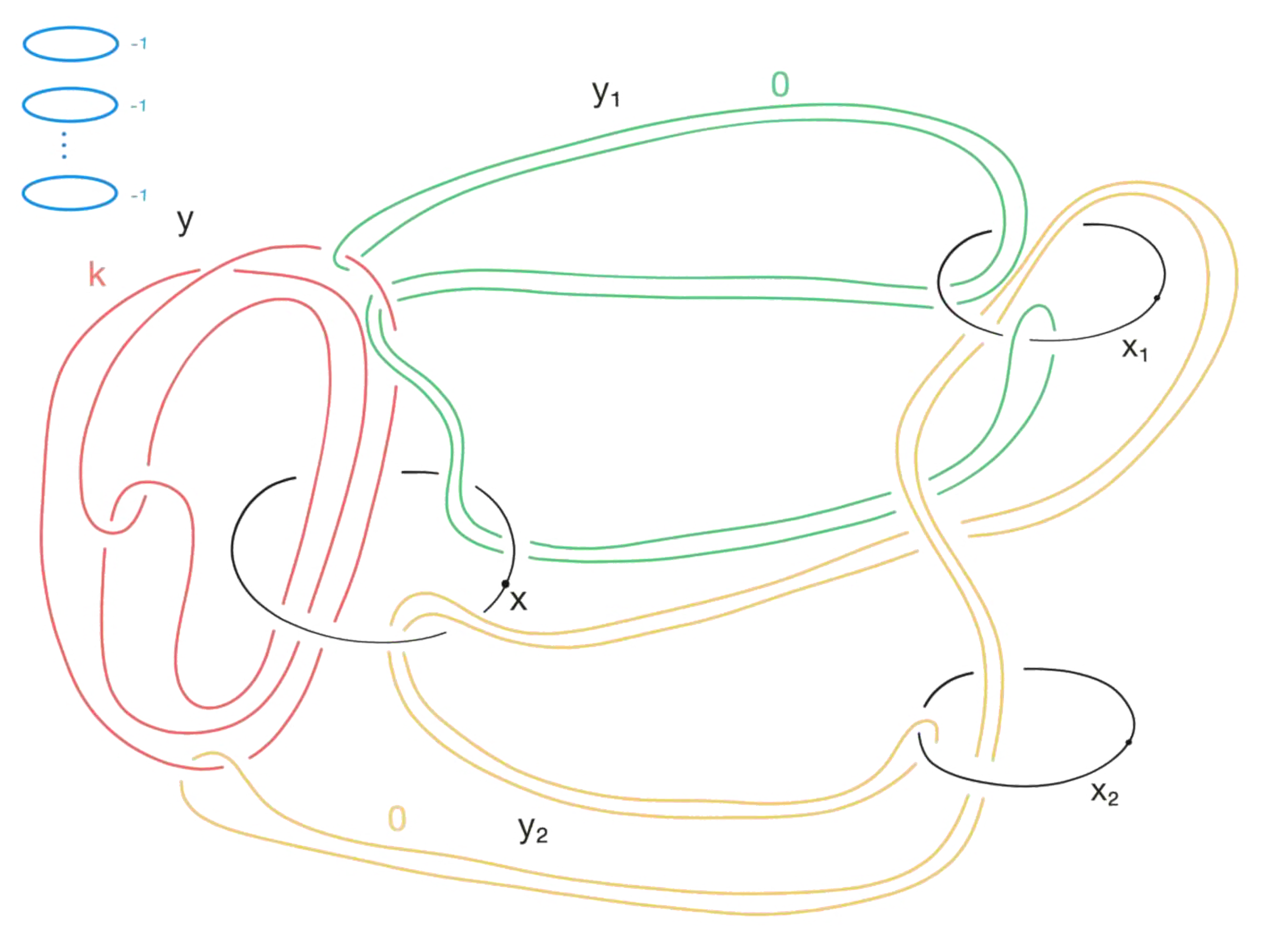} 
  \includegraphics[width=0.42\textwidth]{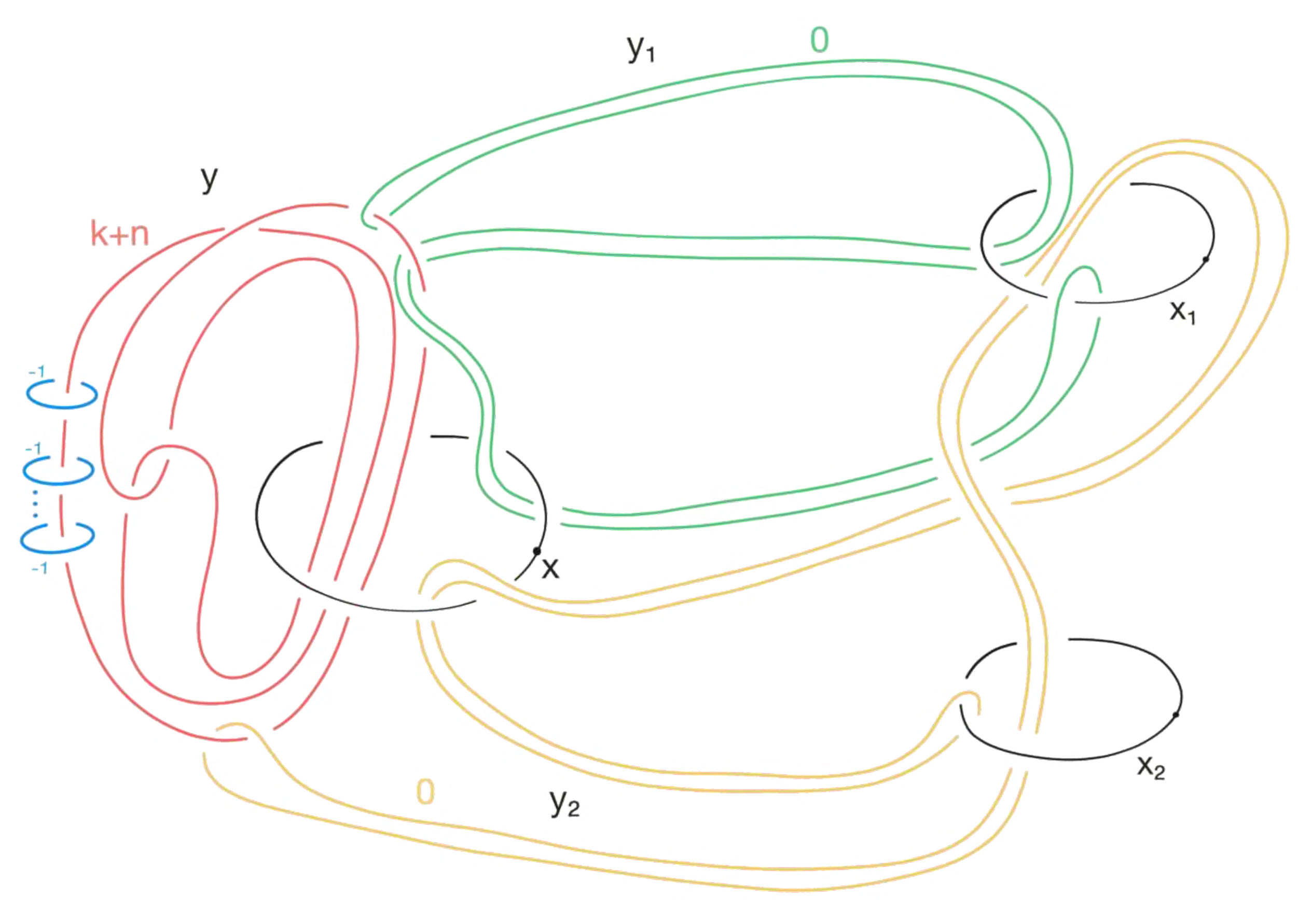}
  \caption{Left: a Kirby diagram of $M \cup W_C\#^n \BC \BP^2$. Right: the diagram obtained from the first one by sliding $y$ over the blue handles; this describes the links $L_{C, n}$.}
  \label{fig5}
\end{figure}

\begin{proof}[Proof of Theorem~\ref{thm2}]

We compare the link $L_{C, n}$ to the unlink $L_0 : M_n \sqcup -M' \hookrightarrow \#^n \BC \BP^2$, which was defined in Section~\ref{2.4}. Since $\#^n \BC \BP^2$ is smooth, closed, and simply connected, while $W_{C, n}$ is simply connected, it follows from Proposition~\ref{biglink} that $L_{C, n}$ is topologically isotopic to $L_0$. In particular, $L_{C, n}$ must be split by a topological $S^3$. 

In order to prove the smooth part of the theorem, we investigate $W_{C, n}$ as a smooth cobordism from $\partial M_n$ to $\partial M'$. Our goal is to show that the associated cobordism map $$F_{W_{C, n}} : HF^+_{\mathrm{red}}(\partial M_n) \rightarrow HF^+_{\mathrm{red}}(\partial M')$$ is an inclusion, where $HF^{+}_{\mathrm{red}}$ is the reduced part of $HF^{+}$ (see \cite{OS1}). In that case, if $L_{C, n}$ were smoothly split by $S^3$, the map $F_{W_{C, n}}$ would factor through $HF^+_{\mathrm{red}}(S^3) = 0$, which would imply $HF^+_{\mathrm{red}}(\partial M_n) = 0$, i.e., $\partial M_n$ is an $L$-space. Then Lemma~\ref{lem2} below provides the desired contradiction.

We decompose $W_{C, n}$ as $W_{C,n} = W_n \cup W_C$, where $W_n$ is the cobordism from $\partial M_n$ to $\partial M$ consisting of the $|n|$ $(-1)$-framed 2-handles attached along $z_n$, while $W_C$ is the cobordism from $\partial M$ to $\partial M'$ coming from the link $L_C \subset S^4$. This yields a decomposition of $F_{W_{C, n}}$ as
$$F_{W_{C, n}} : HF^+_{\mathrm{red}}(\partial M_n) \xrightarrow{F_{W_n}} HF^+_{\mathrm{red}}(\partial M) \xrightarrow{F_{W_C}} HF^+_{\mathrm{red}}(\partial M').$$
We have seen earlier that $W_C$ is a ribbon homology cobordism and its induced map on $HF^{+}$ is an inclusion. Since $U$-torsion elements must be mapped to $U$-torsion elements, the map $F_{W_C}$ on $HF^+_{\mathrm{red}}$ must also be an inclusion. It would then suffice to show that $F_{W_n}$ is an isomorphism. 

We decompose the cobordism $W_n$ into elementary cobordisms with $b_2 = 1$: $$W_n = C_{1} \circ \dots  \circ C_{|n+1|}  \circ C_{|n|}.$$ Here, $C_{|n|}$ is obtained from $\partial M_n \times I$ by attaching a $(-1)$-framed 2-handle along $z_n$. It can be easily checked that $\partial_+C_{|n|} = \partial M_{n+1}$. Similarly, $C_{|n+1|}$ is the cobordism obtained from $\partial M_{n+1} \times I$ by attaching a $(-1)$-framed 2-handle along the meridian $z_{n+1}$ of the 2-handle of $M_{n+1}$. It follows by induction that $C_{|k|}$ is an elementary cobordism from $\partial M_{k}$ to $\partial M_{k+1}$. Letting $F_{C_{|k|}} : HF^+_{\mathrm{red}}(\partial M_k) \rightarrow HF^+_{\mathrm{red}}(\partial M_{k+1})$ be the associated cobordism map, $F_{W_n}$ decomposes as $$F_{W_n} = F_{C_{1}} \circ \dots \circ  F_{C_{|n+1|}} \circ F_{C_{|n|}}.$$ Then the proof of the case $n < 0$ follows from Lemma~\ref{lem1} below.

Finally, we recover the case $n > 0$. Let $-C$ be the concordance from $-K$ to $-K'$ obtained by reversing the orientation of $C$. Theorem~\ref{thm1} provides a link $L_{-C} : -M \sqcup M' \hookrightarrow S^4$. Applying the blow-up construction with $-n$ produces links $L_{-C, -n} : (-M)_{-n} \sqcup M' \hookrightarrow \#^n \overline{\BC \BP^2}$, which are split topologically but not smoothly. Since $-((-M)_{-n}) = M_n$, we can reverse the orientation to obtain links 
$$L_{C, n} : M_n \sqcup -M' \hookrightarrow \#^n \BC \BP^2.$$
\end{proof}

\begin{lem}\label{lem1}
Every cobordism map $F_{C_i}$ above is an isomorphism.    
\end{lem}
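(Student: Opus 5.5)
The plan is to use the surgery exact triangle. Each $C_{|k|}$ (with $n \le k \le -1$) is the 2-handle cobordism from $\partial M_k$ to $\partial M_{k+1}$, obtained by attaching a $(-1)$-framed 2-handle along the meridian $z_k$ of the 2-handle of $M_k$. This fits into the Ozsváth--Szabó surgery exact triangle for the knot $z_k \subset \partial M_k$. Relative to the surgery framing, the three manifolds in the triangle are:
\begin{itemize}
\item $Y_\infty = \partial M_k$;
\item the $(-1)$-surgery $Y_{-1} = \partial M_{k+1}$;
\item the $0$-surgery $Y_0$ on $z_k$.
\end{itemize}

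The key observation is to identify $Y_0$. Attaching a $0$-framed 2-handle along the meridian of the 2-handle of $M_k$ lets us slide the $k$-framed 2-handle off everything and cancel it against the $0$-framed meridian. The surgery diagram then reduces to the dotted circle $x$ alone, so $Y_0 \cong S^1 \times S^2$. Equivalently, $M_k \cup (\text{0-framed handle on } z_k)$ is $S^4$ minus a neighborhood of an $S^1$, whose boundary is $S^1\times S^2$; this is the doubling argument used earlier. So the triangle reads
$$\cdots \to HF^+(\partial M_k) \xrightarrow{F_{C_{|k|}}} HF^+(\partial M_{k+1}) \to HF^+(S^1\times S^2) \to HF^+(\partial M_k)\to\cdots,$$
with the third map induced by the remaining 2-handle cobordism. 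For the ordering of the triangle, I would check which of the three maps is $F_{C_{|k|}}$ via the standard $(\infty, -1, 0)$ arrangement, reversing if necessary.

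Next, I pass to reduced groups. $HF^+(S^1\times S^2)$ is two copies of $\mathcal{T}^+$ and has trivial reduced part. Both $\partial M_k$ and $\partial M_{k+1}$ are homology spheres, so each $HF^+$ is $\mathcal{T}^+ \oplus HF^+_{\mathrm{red}}$. The maps in the triangle are $U$-equivariant. I would argue that, in high degrees, the maps through $HF^+(S^1\times S^2)$ are determined by their action on towers, via the $HF^\infty$ triangle. The terms adjacent to $S^1\times S^2$ are cobordisms with $b_2^+ = 0$ or $b_2=1$ negative definite/zero, and their $HF^\infty$ maps are understood. From this, exactness forces $F_{C_{|k|}}$ to be injective and surjective on $HF^+_{\mathrm{red}}$.

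Concretely, the argument runs as follows. Since $U$-torsion maps to $U$-torsion, any element of $HF^+_{\mathrm{red}}(\partial M_k)$ in the kernel lies in the image of $HF^+(S^1\times S^2)$. That image consists of elements in the image of $U^N$ for all $N$, because $HF^+(S^1\times S^2)$ is $U$-divisible. Such elements lie in the tower, so the kernel meets $HF^+_{\mathrm{red}}$ trivially.

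For surjectivity, I consider an element $r \in HF^+_{\mathrm{red}}(\partial M_{k+1})$. Its image in $HF^+(S^1\times S^2)$ is $U$-torsion, and $U^N r = 0$ for large $N$. Then $U^N$ of its image vanishes, but $HF^+(S^1\times S^2)$ has elements killed by $U^N$ only in low degrees. I would use a grading argument to show the image is zero: the map $HF^+(\partial M_{k+1}) \to HF^+(S^1\times S^2)$ has a fixed degree shift, and the reduced part's gradings must be compared. Then $r$ lifts, possibly up to a tower element, and I would correct the lift by subtracting a tower element whose image is the tower component.

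The main obstacle is this bookkeeping of the tower parts and gradings in the exact triangle, in particular showing that the reduced part of $\partial M_{k+1}$ maps to zero in $HF^+(S^1\times S^2)$. If this gets messy, my fallback is to work with $\widehat{HF}$ or with $HF^+$ with twisted coefficients. Alternatively, I would invoke the known fact that for a 2-handle cobordism whose $0$-surgery is $S^1\times S^2$ (a "meridional" surgery on homology spheres), the map on $HF_{\mathrm{red}}$ is an isomorphism. This can be seen via the $U$-equivariant mapping cone, whose mapping-cone term from $S^1\times S^2$ contributes only towers.
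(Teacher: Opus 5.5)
Your setup follows the same route as the paper, which cites Akbulut--Karakurt, who use exactly this triangle. The slam-dunk identifications $Y_{-1}=\partial M_{k+1}$ and $Y_0\cong S^1\times S^2$ are correct, and $F=F_{C_{|k|}}$ is the first map of $HF^+(\partial M_k)\xrightarrow{F}HF^+(\partial M_{k+1})\xrightarrow{G}HF^+(S^1\times S^2)\xrightarrow{H}HF^+(\partial M_k)$. However, the step you defer as ``bookkeeping'' is the entire content of the lemma, and your sketch does not close it.

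Write $T_A=\bigcap_N U^N HF^+(\partial M_k)$, and define $T_B$ likewise. Then $HF^+_{\mathrm{red}}$ is the quotient by these towers. A complement is not canonical, and the composition argument in Theorem~\ref{thm2} needs the induced quotient maps.

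Your injectivity argument correctly shows $\ker F=\mathrm{Im}\,H\subseteq T_A$. But injectivity on the quotient also requires $\mathrm{Im}\,F\cap T_B=0$. You cannot repair this by subtracting tower elements, because $F(T_A)=0$: the $HF^\infty$ triangle has ranks $1,1,2$, which forces $F^\infty=0$ (over $\mathbb{F}_2$ the tower contributions of $\mathfrak{s}$ and $\bar{\mathfrak{s}}$ cancel). So injectivity is equivalent to $G|_{T_B}$ being injective. That is a low-degree statement of exactly the kind you leave open for surjectivity.

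For surjectivity, it is not true that reduced classes map to $0$ under $G$ for an arbitrary splitting. What is needed is $\mathrm{Im}\,G=G(T_B)$. (Also, ``$U$-torsion maps to $U$-torsion'' carries no information in $HF^+$, where every element is $U$-torsion.)

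The missing input is the absolute grading: $d(\partial M_k)=0$ for every $k$, because each $M_k$ is itself a contractible Mazur manifold. With this, the argument goes as follows.
\begin{enumerate}
\item $G$ and $H$ are each homogeneous of degree $-1/2$: exactly one spin$^c$ structure restricts to the torsion one on $S^1\times S^2$, and $b_2=1$ with zero intersection form.
\item $HF^+(S^1\times S^2)=\mathcal{T}^+_{(-1/2)}\oplus\mathcal{T}^+_{(1/2)}$.
\item Since $\mathrm{Im}\,H=\ker F=T_A=\mathcal{T}^+_{(0)}$ sits in even degrees, $H$ kills $\mathcal{T}^+_{(-1/2)}$ and maps $\mathcal{T}^+_{(1/2)}$ isomorphically onto $T_A$. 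Hence $\mathrm{Im}\,G=\ker H=\mathcal{T}^+_{(-1/2)}$.
\item $G$ is nonzero on the tower in high degrees, and the bottom of $T_B$ (in degree $d(\partial M_{k+1})=0$) lands in degree $-1/2$. So $G$ maps $T_B$ isomorphically onto $\mathcal{T}^+_{(-1/2)}$.
\item Therefore $HF^+(\partial M_{k+1})=T_B\oplus\ker G$, with $\ker G=\mathrm{Im}\,F\cong HF^+(\partial M_k)/T_A$. This is the claimed isomorphism.
\end{enumerate}
The count really depends on $d=0$. A positive $d(\partial M_k)$ would leave a finite piece of $\mathcal{T}^+_{(1/2)}$ in $\ker H$, which would have to be hit by reduced classes of $\partial M_{k+1}$, so the induced map could not be onto.

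Your fallback ``known fact'' is just the lemma restated; to use it you would have to cite Akbulut--Karakurt, as the paper does.
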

\begin{proof}
This is already demonstrated in \cite{Akbulut-Karakurt}, as part of the proof of Proposition 1.2. Using the surgery exact triangle for Heegaard Floer homology \cite{OS1}, they show that the elementary cobordisms $C_{|k|}$ from $\partial M_k$ to $\partial M_{k+1}$ --- consisting of a $(-1)$-framed 2-handle attached along the meridian of the 2-handle of $M_k$ --- induce isomorphisms $F_{C_{|k|}} : HF^+_{\mathrm{red}}(\partial M_k) \xrightarrow{\cong} HF^+_{\mathrm{red}}(\partial M_{k+1})$.
\end{proof}

\begin{lem}\label{lem2}
For any Mazur manifold $M$, its boundary $Y = \partial M$ is not an $L$-space unless $M \cong B^4$.
\end{lem}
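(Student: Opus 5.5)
The plan is to reduce the statement to a classification of L-space surgeries on knots in $S^1 \times S^2$, and then cancel handles. Write $M = M(K)$ with $K \subset S^1 \times S^2$ a framed knot of winding number $1$ and framing $k$. Then $Y = \partial M$ is the Dehn surgery $(S^1 \times S^2)_k(K)$, and $Y$ is an integral homology sphere. Suppose $Y$ is an $L$-space. I will show first that $K$ is isotopic in $S^1 \times S^2$ to the core $S^1 \times \{\mathrm{pt}\}$, and then that $M \cong B^4$.

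\textbf{Step 1: Floer simplicity.} Since $K$ has winding number $1$, it is homologically essential, so its exterior $E_K = S^1 \times S^2 \setminus \nu(K)$ has $H_1(E_K) \cong \BZ$, generated by the meridian $\mu_K$. The surgery slopes are then affinely parametrized by the framing. The trivial filling recovers $S^1 \times S^2$, and the filling giving $Y$ is a nontrivial integer filling with $|H_1| = 1$. I would apply the surgery exact triangle together with the rank inequality for $\widehat{HF}$ of Dehn fillings (as in Ozsváth--Szabó and Rasmussen's $L$-space interval for Floer simple manifolds) to the exterior $E_K$. The goal is to conclude that $E_K$ is Floer simple and that its knot Floer homology $\widehat{HFK}(S^1 \times S^2, K)$ has rank equal to that of the core. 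Here I expect to need the adjunction/Thurston-norm detection by knot Floer homology in $S^1 \times S^2$ (Ni's fiberedness and genus detection, extended to rationally null or essential knots).

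\textbf{Step 2: identifying $K$ (main obstacle).} This is the hard step: show that a winding-number-one knot in $S^1 \times S^2$ with an $L$-space integral surgery is the core. I would invoke the result of Ni--Vafaee on $L$-space surgeries on knots in $S^1 \times S^2$. Their argument proceeds as follows: Floer simplicity together with knot Floer detection of the Thurston norm shows that $E_K$ has minimal genus Seifert-type surfaces of genus $0$ relative to the core class. This forces $E_K \cong S^1 \times D^2$, and hence $K$ is isotopic to $S^1 \times \{\mathrm{pt}\}$, possibly after handle-slides over the sphere, which is an isotopy in $S^1 \times S^2$. Equivalently, one can cite the theorem of Conway--Tosun that a Mazur-type manifold with $L$-space boundary has boundary $S^3$, which is proved by exactly this route. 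If I had to argue directly, I would first try the genus bound: an $L$-space surgery forces the knot Floer homology of $K$ to be thin and supported in Alexander gradings bounded by the genus, and winding number $1$ forces that genus to be $0$.

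\textbf{Step 3: conclusion.} Once $K$ is isotopic to the core $S^1 \times \{\mathrm{pt}\}$, the $2$-handle of $M$ meets the belt sphere of the $1$-handle geometrically once, for any framing. The pair therefore cancels, and $M \cong B^4$. In particular $Y \cong S^3$, which is consistent. Conversely, if $M \cong B^4$ then $Y = S^3$ is an $L$-space, so the exception in the statement is genuine.
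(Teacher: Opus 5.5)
Your Step 2, where all the content of the lemma lies, is not proved but delegated to citations. The citation you finally rely on is misquoted. Conway--Tosun's theorem concerns Mazur manifolds whose boundary is an \emph{irreducible} $L$-space, and nothing in your proposal shows that $Y=(S^1\times S^2)_k(K)$ is irreducible. Supplying that hypothesis is exactly what the paper's proof does. Since $K$ has winding number $1$, it does not lie in a ball, so $N_K=S^1\times S^2\setminus\nu(K)$ is irreducible. It is also $\partial$-irreducible unless $K$ is the core, in which case $M\cong B^4$ anyway. Scharlemann's theorem on reducible surgeries then says a reducible surgery on $K$ forces $K$ to be a cable knot, and a cable cannot produce a homology sphere. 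So $Y$ is irreducible and Conway--Tosun applies. Without an argument of this kind your proof is incomplete.

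Your Step 1 does not fill the gap as written; it states a goal rather than giving an argument. Take the triad $(\mu_K,\lambda+k\mu_K,\lambda+(k+1)\mu_K)$, with $\lambda$ a framing curve. The third term $\widehat{HF}(S^1\times S^2)$ has rank $2$, so the exact triangle only gives $\operatorname{rank}\widehat{HF}(Y_{k\pm1})\le 3$. That does not produce a second $L$-space filling or Floer simplicity.

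The homological set-up is also wrong. Since $K$ meets $\{\mathrm{pt}\}\times S^2$ algebraically once, $\mu_K$ bounds a punctured sphere in $E_K$. So $\mu_K$ is null-homologous there (it is the rational longitude), and $H_1(E_K)\cong\BZ$ is generated by a longitude, not by $\mu_K$. Likewise, \enquote{winding number $1$ forces genus $0$} has no content, since $K$ is not null-homologous.

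If you want a Floer-theoretic proof that avoids irreducibility of $Y$, apply Ni--Vafaee to the right knot: the dual knot $K^*\subset Y$ (the core of the surgery solid torus), whose null surgery is $S^1\times S^2$. Their result, after checking its hypotheses, says such a knot in an $L$-space is Floer simple and fibered. Then $\widehat{HFK}(Y,K^*)$ has rank $|H_1(Y)|=1$, which forces the fiber to be a disk. Hence $E_K\cong S^1\times D^2$ with meridian $\mu_K$, so $K$ is isotopic to the core, and your Step 3 then finishes correctly.
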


\begin{proof}
It is shown in \cite{Conway-Tosun} that if $M$ is a Mazur manifold such that its boundary $Y$ is an irreducible $L$-space, then $M \cong B^4$ and $Y \cong S^3$. Thus, it suffices to show that $Y$ must be irreducible.

Note that for $M = M(K)$, where $K \subset S^1 \times S^2$ is a framed knot homotopic to the core, the boundary $\partial M$ is obtained by performing a Dehn surgery on $K$ with the given framing. Since $K$ is not contained in a 3-ball, its complement $N_K = S^1 \times S^2 \backslash \nu(K)$ must be irreducible. Furthermore, $N_K$ is also $\partial$-irreducible as long as $K$ is not the core circle $S^1 \times \{\mathrm{pt}\}$. Since that would imply $M \cong B^4$, we ignore that case and assume that $N_K$ is irreducible and $\partial$-irreducible. Then the main theorem of \cite{Scharlemann} applies, implying that either $\partial M$ is irreducible or $K$ is a cable knot \cite{Rolfsen}. However, a cable knot in $S^1 \times S^2$ cannot produce a homology sphere. We conclude that $\partial M$ is irreducible, and hence not an $L$-space.
\end{proof}

Corollaries~\ref{cor1} and \ref{cor2} follow right away. We describe relative Kirby diagrams of the exotic pairs of the former. 

The diagram of the pair $(W_{C, n}, \partial M_n)$ is obtained from Figure~\ref{fig5} (right), which represents $M_n \cup W_{C, n}$, if we instead treat $x$ and $y$ as a surgery description of $\partial M_n$, rather than 4-dimensional handles. 

The diagram of $(W_0, \partial M_n)$ is of the following form. Embedding $M_n$ standardly in a 4-ball, we get a relative diagram of $(B^4, \partial M_n)$ from Figure~\ref{fig2} (right) by replacing the framing $k$ by $k+n$ and treating $x$ and $y$ as a surgery description for $\partial M_n$ as before, where a 3-handle is omitted. Next, we add $|n|$ $(\pm 1)$-framed 2-handles away from the picture, obtaining $(\#^n \BC \BP^2\backslash L_0(M_n), \partial M_n)$. Finally, we delete a standard copy of $-M'$ diagrammatically as follows. The handle decomposition of the double $M' \cup -M' = S^4$ allows us to replace a 4-handle by a 1-handle, a $k$-framed 2-handle at $K'$, a 0-framed 2-handle at its meridian, a 3-handle, and a 4-handle. Deleting the last three handles --- which form $-M'$ --- we are left with the 1- and 2-handles of $M'$, which are attached away from the picture. This completes the relative Kirby diagram of $(W_0, \partial M_n)$. 

\begin{remark}
Using the strategy of Theorem~\ref{thm2}, one can turn the links $L : M_1 \sqcup -(M_1 \natural M_2) \hookrightarrow S^4$ from Remark~\ref{A2} into links $L_n : (M_1)_n \sqcup -(M_1 \natural M_2) \hookrightarrow \#^n \BC \BP^2$ that are split topologically but not smoothly. Comparing these to the unlinks $L_0 : (M_1)_n \sqcup -(M_1 \natural M_2) \hookrightarrow \#^n \BC \BP^2$, we see that their complements must form exotic pairs, just like in Corollary~\ref{cor1}. One can obtain their relative Kirby diagrams in a similar fashion.
\end{remark}

\section{Exotica and the zero-dot exchange}\label{5}
Recall from Subsection~\ref{1.2} that $K_0 \subset S^1 \times S^2$ is a knot with winding number $1$ and framing zero, which appears unknotted in the standard surgery diagram of $S^1 \times S^2$. This implies that $M_0 = M(K_0)$ lies in $\M_0$, but we further assume that $-M_0 \in \M_S \subset \M_0$, i.e., $\on{tb}(-M_0) > 0$. In particular, it follows that the boundary map $\theta : \partial M_0 \rightarrow \partial M^{\bullet}$ does not extend to a diffeomorphism $\Theta : M_0 \xrightarrow{\cong} M^{\bullet}$, where $M^{\bullet} = M(K^{\bullet})$ is the Mazur manifold obtained from $M_0$ by a zero-dot exchange.

Choose a ribbon concordance $C_0 \subset S^1 \times S^2 \times I$ from $K_0$ to a knot $K'$, set $M' = M(K')$, and let $$L_{C_0} : M_0 \sqcup -M' \hookrightarrow S^4$$ be the link provided by Theorem~\ref{thm1}. Replacing $M_0$ by $M^{\bullet}$ via $\theta$, we obtain a link $$L_{C_0}^{\bullet} : M^{\bullet} \sqcup -M' \hookrightarrow S^4.$$ Finally, let $$L_{C_n}^{\bullet} : M_n^{\bullet} \sqcup -M' \hookrightarrow \#^n \BC \BP^2$$ be the link obtained from $L_{C_0}^{\bullet}$ by following the strategy preceding the proof of Theorem~\ref{thm2}. Concretely, we first \enquote{blow up} the induced decomposition of $L_{C_0}^{\bullet}$ as follows:
$$S^4 = M^{\bullet} \cup_{\theta} W_{C_0} \cup -M' \rightsquigarrow \#^n \BC \BP^2 = M^{\bullet} \#^n \BC \BP^2 \cup_{\theta} W_{C_0} \cup -M'.$$
Next, we slide the 2-handle of $M^{\bullet}$ once over each of the $(-1)$-framed 2-handles of $\#^n \BC \BP^2$, obtaining
$$M^{\bullet}_n \hookrightarrow M^{\bullet} \#^n \BC \BP^2,\quad W_{n}^{\bullet} \coloneq M^{\bullet} \#^n \BC \BP^2 \backslash M_n^{\bullet}.$$
Then the link $L_{C_n}^{\bullet} : M^{\bullet}_n \sqcup -M' \hookrightarrow \#^n \BC \BP^2$ is determined by the decomposition
\begin{equation}\label{5.1}
\#^n \BC \BP^2 = M^{\bullet}_n \cup W_{n}^{\bullet} \cup W_{C_0} \cup -M'.
\end{equation}

\begin{proof}[Proof of Theorem~\ref{thm3}]
We wish to show that $X \coloneq \#^n \BC \BP^2 \backslash L_{C_n}^{\bullet}(M_n^{\bullet})$ is a Stein manifold. To that end, we describe its Kirby diagram. It follows from~\eqref{5.1} that $X$ can be written as
\begin{equation}\label{5.2}
X = -M' \cup W_{C_0} \cup W_n^{\bullet} = -M_0 \cup W_n^{\bullet},\end{equation}
where the second equality follows from the fact that $L_{C_0}$ satisfies $S^4 \backslash L_{C_0}(M_0) = -M' \cup W_{C_0} = -M_0$.

In order to obtain a Kirby diagram of $-M_0 \cup W_n^{\bullet}$, we look at where the belt spheres of the 2-handles of $W_n^{\bullet}$ are sent under the gluing map $\phi : \partial_+ W_n^{\bullet} \rightarrow \partial M_0$, which decomposes as follows:
$$\phi : \partial_+W_n^{\bullet} \xrightarrow{=} \partial (M_n^{\bullet} \cup W_n^{\bullet}) \xrightarrow{\phi_n} \partial (M^{\bullet} \#^n \BC \BP^2) \xrightarrow{=} \partial M^{\bullet} \xrightarrow{\theta^{-1}} \partial M_0.$$
Here we write an equality above the first arrow because we are working with the relative Kirby diagram of $(W_n^{\bullet}, \partial M_n^{\bullet})$ coming from the diagram of $M_n^{\bullet}$.
It becomes clear that we only need to analyze the diffeomorphism $\phi_n$, which consists of $|n|$ Rolfsen twists along the $(-1)$-framed, negatively oriented meridians $-\mu^-_n$ of $K_n^{\bullet}$, where $K_n^{\bullet}$ is the $n$-framed knot determining $M_n^{\bullet}$. These meridians are negatively oriented because we slid the 2-handle of $M^{\bullet}$ over the $(-1)$-framed 2-handles, resulting in negative linking. 

Through careful analysis of the slam-dunk move, one verifies that $\phi_n$ sends the oriented, 0-framed meridians of $-\mu^-_n$ to the oriented, $(-1)$-framed meridians $\mu^-$ of the knot $K^{\bullet}$ defining $M^{\bullet}$. Applying the zero-dot exchange $\theta^{-1}$, these are sent to the oriented, $(-1)$-framed meridians $\mu^-_{\bullet}$ of the dotted unknot in the surgery diagram of $\partial M_0$. Lastly, after reversing the orientation of $M_0$, the $\mu^-_{\bullet}$ are sent to the negatively oriented, $(-1)$-framed meridians $-\mu^-_{\bullet}$ of the dotted unknot in the surgery diagram of $- \partial M_0$. It follows that the Kirby diagram of $X = -M_0 \cup W_n^{\bullet}$ consists of the diagram of $-M_0$ together with $|n|$ negatively oriented, $(-1)$-framed meridians of the dotted unknot, as in Figure~\ref{fig6} (left).

\begin{figure}[htbp]
  \centering
  \includegraphics[width=0.25\textwidth]{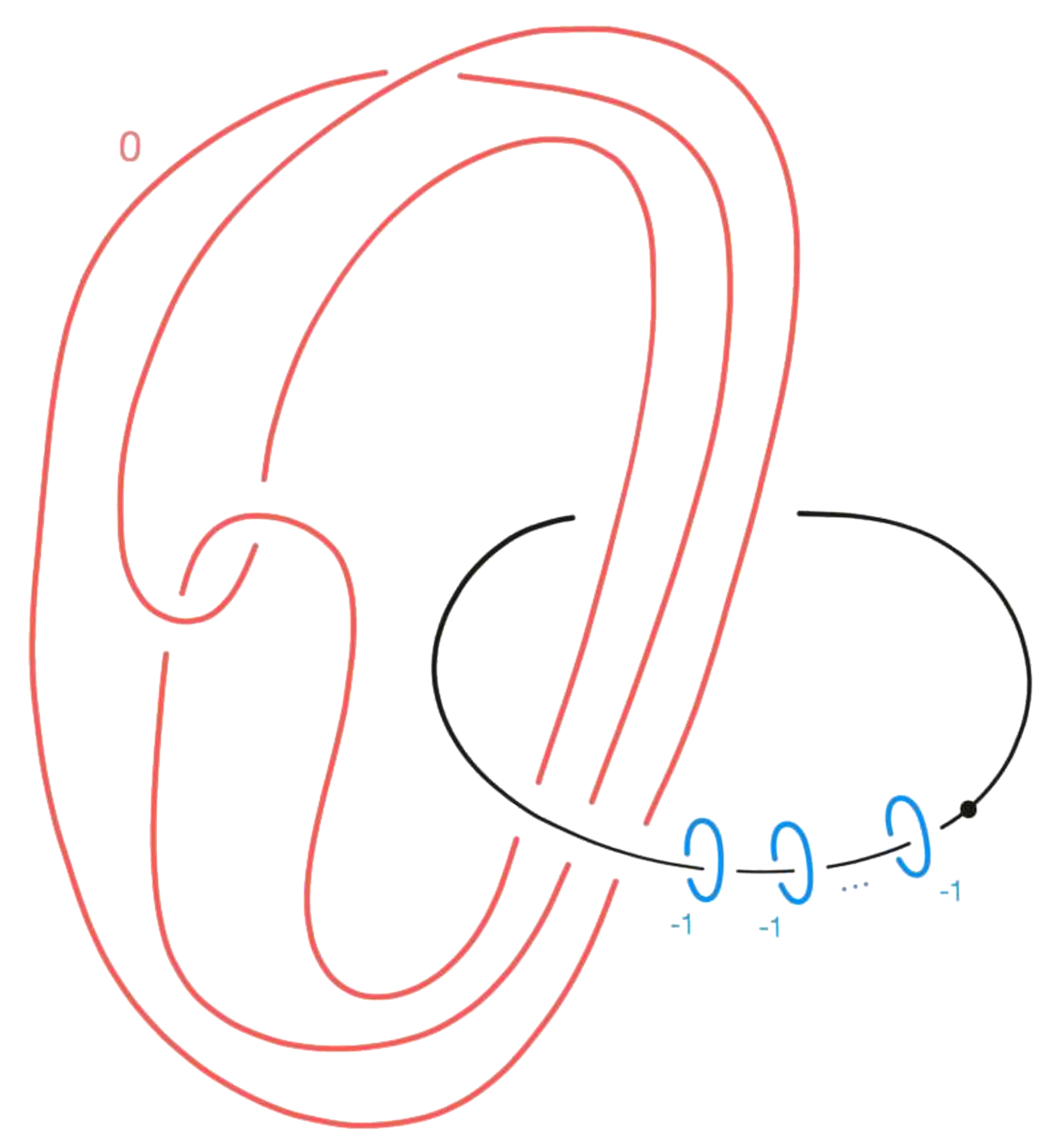} 
  \hspace{2.5 cm}
  \includegraphics[width=0.25\textwidth]{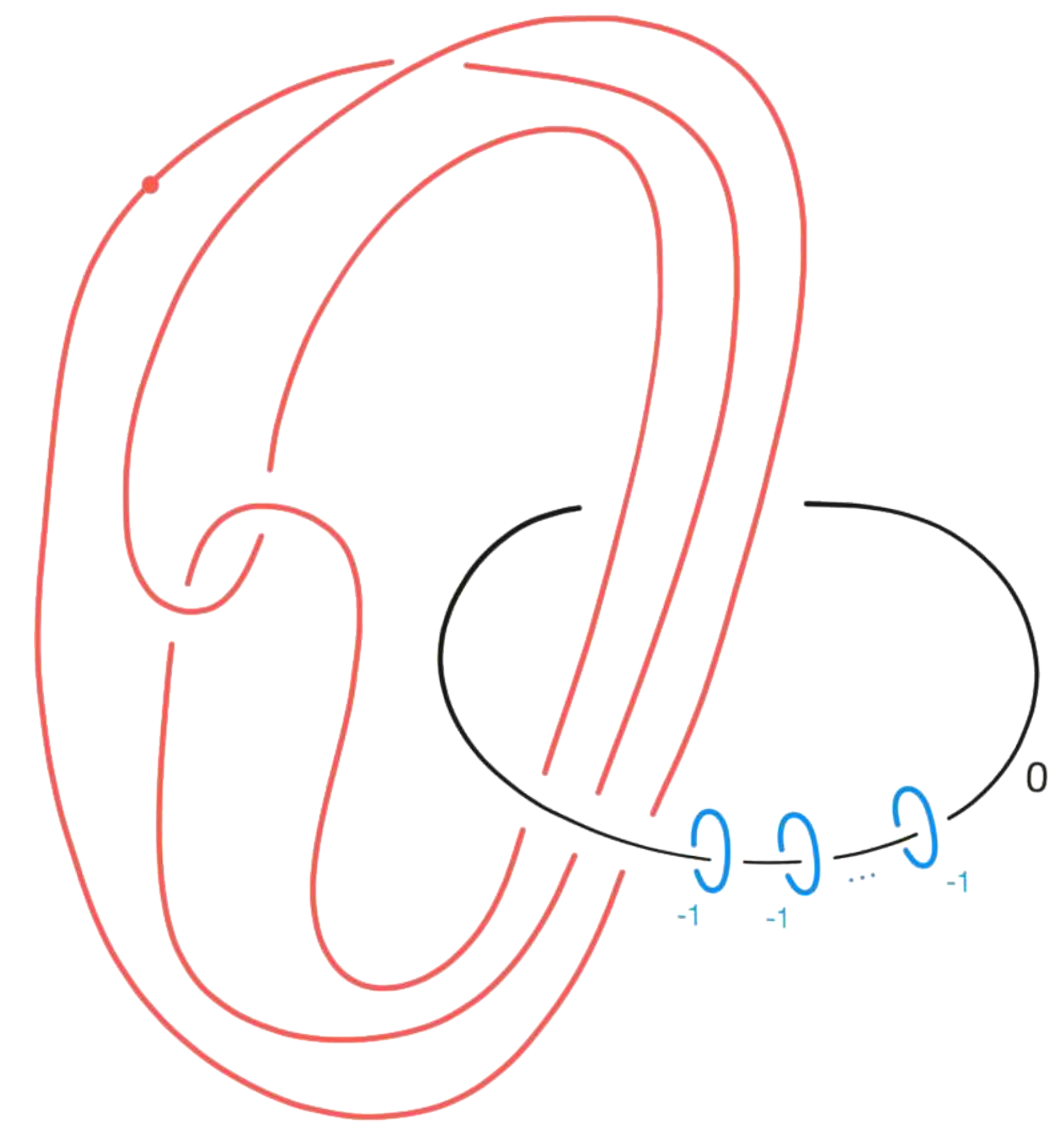}
  \caption{Left: a Kirby diagram of the complement $\#^n \BC \BP^2 \backslash L_{C_n}^{\bullet}(M_n^{\bullet})$.\\ Right: a Kirby diagram of the complement of the standard embedding $M_n^{\bullet} \hookrightarrow \#^n \BC \BP^2$.}
  \label{fig6}
\end{figure}

We claim that this Kirby diagram satisfies the Eliashberg criterion and therefore represents a Stein manifold. Indeed, passing to the 3-ball notation for the 1-handle, we have $\on{tb}(-K_0) > 0 = \on{fr}(-K_0)$ by assumption, where $\on{fr}$ denotes the framing. Furthermore, $$\on{tb}(-\mu_{\bullet}^-) = \on{wr}(-\mu_{\bullet}^-) - \on{c}(-\mu_{\bullet}^-)/2 = 0 > -1 = \on{fr}(-\mu_{\bullet}^-),$$
where $\on{wr}$ and $\on{c}$ stand for the writhe and cusp number, respectively. 

On the other hand, the \textit{standard} embedding $M^{\bullet}_n \hookrightarrow \#^n \BC \BP^2$ must have complement $X' = \#^n \BC \BP^2 \backslash M^{\bullet}_n = -M^{\bullet}_{n} \#^n \BC \BP^2$. Since $X'$ contains a smoothly embedded sphere of self-intersection $-1$,  it cannot admit a Stein structure (see \cite[Theorem~11.4.2]{Gompf-Stipsicz}). Sliding the 2-handle of $-M_n^{\bullet}$ once over each of the $(-1)$-framed 2-handles, one obtains a Kirby diagram for $X'$ as in Figure~\ref{fig6} (right). Since $\partial X = -\partial M_n^{\bullet} = \partial X'$, while $X$ and $X'$ are simply connected and have isomorphic intersection forms, we can again apply Boyer's results to conclude that $X$ and $X'$ are homeomorphic. It follows that $(X, X')$ is an exotic pair. Moreover, it is clear from Figure~\ref{fig6} that $X$ contains an embedded copy of $-M_0$, and replacing it by $-M^{\bullet}$ via $-\theta$ results in $X'$.

Lastly, we show that the pair $(X, X')$ is also related by a replacement $-M' \rightsquigarrow -N'$, where $N' \coloneq S^4 \backslash L^{\bullet}_{C_0}(-M') = M^{\bullet} \cup W_{C_0}$, as in Theorem~\ref{thm4}. Replacing $M'$ by $N'$ in~\eqref{5.2}, we obtain
$$-N' \cup W_{C_0} \cup W_n^{\bullet} = -M^{\bullet} \cup_{\phi_n} W_n^{\bullet} = -M^{\bullet}_n \#^n \BC \BP^2 = X',$$
where the first equality will follow from the proof of Theorem~\ref{thm4}, while the second equality follows from our investigation of the map $\phi_n$ and the Kirby diagram of $X'$.

We have seen that removing $-M'$ from $X$ and gluing in $-N'$ produces $X'$. Since the pair $(X, X')$ is exotic, we conclude that $-M' \rightsquigarrow -N'$ is an effective replacement in the language of \cite{Equivariant}. In particular, the boundary map $\theta' : \partial M' \rightarrow \partial N'$ does not extend to a diffeomorphism $\Theta' : M' \xrightarrow{\cong} N'$. 
\end{proof}

Lastly, we prove Corollary~\ref{cor3}.

\begin{proof}
First, assume $n < 0$. Let $f : M_n^{\bullet} \hookrightarrow \#^n \BC \BP^2$ be the standard embedding, and let $g : M_n^{\bullet} \hookrightarrow \#^n \BC \BP^2$ be the restriction of $L_{C_n}^{\bullet}$ to $M_n^{\bullet}$. If there were a diffeomorphism $\Phi : \#^n \BC \BP^2 \rightarrow \#^n \BC \BP^2$ sending $f(\partial M_n^{\bullet})$ to $g(\partial M_n^{\bullet})$, it would restrict to a diffeomorphism between $\#^n \BC \BP^2 \backslash L_{C_n}^{\bullet}(M_n^{\bullet})$ and the complement of the image of $f$, contradicting Theorem~\ref{thm3}. 

For the positive case, we simply reverse the orientation of $\#^n \BC \BP^2$. While $\#^{-n} \BC \BP^2 \backslash L_{C_n}^{\bullet}(M_n^{\bullet})$ is no longer Stein, it is still an exotic copy of $\#^{-n} \BC \BP^2 \backslash f(M_n^{\bullet})$ since their negative copies are distinguished.
\end{proof}

\section{Exotica and ribbon homology cobordisms}\label{6}
Before proving Theorem~\ref{thm4}, we recall the setting.
As usual, we assume that $(M, M')$ is a ribbon pair of Mazur manifolds, related by a ribbon concordance $C \subset S^1 \times S^2 \times I$.

Let $\varphi : M \hookrightarrow S^4$ be a smooth embedding such that the complement $N \coloneq -S^4 \backslash \varphi(M)$ lies in $\A$. Since $N$ is simply connected, it follows from Proposition~\ref{topknot} that $\varphi$ is topologically isotopic to the standard embedding $\varphi_M : M \hookrightarrow S^4$. 

We assume that $\varphi$ is not smoothly isotopic to $\varphi_M$. As we saw in Section~\ref{2.1}, this is equivalent to requiring that the induced boundary map $\theta : \partial M \rightarrow \partial N$ does not extend to a diffeomorphism $\Theta : M \xrightarrow{\cong} N$, although it must extend to a homeomorphism $M \approx N$.

Letting $L_C$ be the link provided by Theorem~\ref{thm1}, we consider a link $$L_{C, \theta} : N \sqcup -M' \hookrightarrow S^4$$
obtained by removing $M$ and gluing in $N$ using the map $\theta$. The ambient space remains $S^4$ because $S^4 \backslash L_C(M) = -M$ and $N \cup_{\theta} -M = S^4$.

Next, denote $\varphi' \coloneq L_{C, \theta}|_{M'}$ and set $N' \coloneq S^4 \backslash \varphi'(-M') = N \cup_{\theta} W_C$. 
Letting $\theta' : \partial M' \rightarrow \partial N'$ be the boundary diffeomorphism associated to the embedding $\varphi' : M' \hookrightarrow S^4$, we need to show that $\theta'$ does not extend to a diffeomorphism $\Theta' : M' \xrightarrow{\cong} N'$, although again it must extend to a homeomorphism $M' \approx N'$. With that in mind, we apply the replacement $M' \rightsquigarrow_{\theta'} N'$ to the link $L_{C, \theta}$, obtaining a link
$$L_{C, \theta}' : N \sqcup -N' \hookrightarrow S^4,$$
where the ambient space is $S^4$ because it is the double of $N' = N \cup W_C$, and $N' \in \A$ since $N \in \A$.

\begin{proof}[Proof of Theorem~\ref{thm4}]
Assume, to the contrary, that $\theta'$ extends to a diffeomorphism $\Theta' : M' \xrightarrow{\cong} N'$. Then we can extend $\Theta'$ to a diffeomorphism
$$\Theta : M' \cup - W_C \xrightarrow{\cong} N' \cup -W_C = N \cup_{\theta} W_C \cup -W_C.$$
As the name suggests, $\Theta$ is, in fact, a diffeomorphism $M \cong N$ extending $\theta$, providing the necessary contradiction.

Since we have seen that $M' \cup -W_C = M$, it would suffice to show that
$$N \cup W_C \cup -W_C = N,$$
hence the theorem follows from the following lemma.
\end{proof}

\begin{lem}\label{lemD}

There exists a diffeomorphism $N \cup W_C \cup -W_C \cong N$ restricting to $\theta$ on the boundary, where we identify $\partial_+(-W_C) = -(\partial_-W_C) = \partial M$.
\end{lem}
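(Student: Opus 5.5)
We will prove Lemma~\ref{lemD} by Kirby calculus, using the AC structure of $N$ to reduce to the double of a Mazur manifold, where Proposition~\ref{propA} already did the work.

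\textbf{Diagram of $N\cup_\theta W_C$.} The relative diagram of $(W_C,\partial M)$ from Figure~\ref{fig3} (top right) consists of the dotted circles $x_i$ and the $0$-framed $2$-handles $y_i$. Each $y_i$ is the band sum, along $\gamma_i$, of $-\mu_K$ with the meridian $\mu_i$ of $x_i$. The surgery picture of $\partial M$ is the one coming from $M$, so these handles are attached to $\partial M$. To glue onto $N$ via $\theta$, we must push the attaching data through $\theta$. The handles $x_i$ sit in small balls, and $\theta$ can be isotoped to fix such balls, so only the image curves $\theta(\gamma_i\cup\mu_K)$ matter. Thus $N\cup_\theta W_C$ is the diagram of $N$ together with the pairs $x_i,y_i$, where each $y_i$ now runs along $\theta(\mu_K)$ in $\partial N$.

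\textbf{Diagram of the whole manifold.} Gluing $-W_C$ onto the upper boundary dually adds $0$-framed $2$-handles $z_i$ along the meridians of the $y_i$, followed by $3$-handles $w_i$ dual to the $x_i$. This is the standard way a handle pair $(x_i,y_i)$ is undone by its upside-down copy. The key move is the one from the proof of Proposition~\ref{propA}. Each $y_i$ is a $0$-framed curve passing geometrically once through $x_i$, apart from the part of it that runs through $\theta(\mu_K)$. So we slide $y_i$ over $z_i$ to change its crossings freely, since $z_i$ is a $0$-framed meridian of $y_i$. Then $y_i$ together with $z_i$ forms a Hopf-link pair. The usual argument then applies: $z_i$ meets the belt of $y_i$ once, and sliding everything off lets $y_i$ and $z_i$ split off as an $S^2\times S^2$-type pair. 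That pair is then cancelled by the $3$-handle $w_i$ and the $1$-handle $x_i$. Concretely, we first slide so that $z_i$ unlinks $y_i$ from the rest of $N$'s diagram. This lets us isotope $y_i$ back along $\gamma_i$ until it is a meridian of $x_i$, and we cancel $x_i$ with $y_i$. Finally we cancel $z_i$ with $w_i$, since $z_i$ is now a $0$-framed unknot split off.

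\textbf{Main obstacle.} The hardest step is to make sure these cancellations yield exactly $N$, with the identification on the boundary equal to $\theta$, and not just some manifold diffeomorphic to $N$. This is precisely why the AC structure is used rather than a product structure on $W_C\cup -W_C$. We would argue that all slides are over the new handles $z_i$, and that no handle of $N$ is ever slid over. Then the handles of $N$, and hence its boundary parametrization, survive untouched, and the final boundary identification is the composite of $\theta$ with the identity.

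The remaining point is the $3$-handle cancellations. A $3$-handle is determined by its attaching sphere, and we need $w_i$ to be the sphere that cancels the split unknot $z_i$. We would settle this by checking that the attaching $2$-spheres of the $w_i$ are the obvious ones. If that check is delicate, we would instead use $N\in\A$: write $N$ with $1$-handles $a_j$ and $2$-handles $r_j$ homotopic to the $a_j$. In $N\times I$-type arguments, homotopy implies isotopy for these $2$-handle curves, and this lets us trade the $\theta(\mu_K)$ portions of the $y_i$ for curves in a ball. After that, the computation reduces verbatim to the $S^4=M\cup -M$ calculation of Proposition~\ref{propA}.
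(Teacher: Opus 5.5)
You have a genuine gap at the step ``we first slide so that $z_i$ unlinks $y_i$ from the rest of $N$'s diagram.'' It is tied to your explicit plan that no handle of $N$ is ever slid over.

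Sliding $y_i$ over the $0$-framed meridian $z_i$, or sliding another $2$-handle over $z_i$, only realizes crossing changes of $y_i$ with itself and with other $2$-handles. These are homotopies of $y_i$ in the complement of the dotted circles, and they never let $y_i$ pass over a $1$-handle. So the word $w$ that the $\theta(\mu_K)$-portion of $y_i$ spells in the $1$-handles $a_j$ of $N$ is unchanged (up to conjugacy) by all of your moves. This word is nontrivial in general. Already in the zero-dot case, $\theta(\mu_K)$ is the meridian $\mu'$ of the dotted circle $y'$ of $N$, so $\on{lk}(y_i,y')=\pm1$. No sequence of $z_i$-slides can then free the $\mu'$-end of $y_i$, so you can never retract $y_i$ along $\gamma_i$ to a meridian of $x_i$ and cancel it against $x_i$. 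Without that retraction, $y_i$ need not even run geometrically once over $x_i$, since the band $\gamma_i$ may pass through the spanning discs of $x_i$ and of the other $x_j$.

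The missing idea is to first slide $y_i$ over the $2$-handles of $N$; this is exactly where the paper uses that $N$ is AC. In the special case the paper slides $y_i$ over $x'$ to get $\on{lk}(y_i,y')=0$. In general it slides $y_i$ over the $b_j$, each of which reads $a_j$, until the word spelled by $\mu_i'$ is freely trivial. Only after that do the $z_i$ finish the job:
\begin{itemize}
\item slides of $y_i$ over and under $z_i$ give self-crossing changes that pull the $\mu_i'$-portion off the $a_j$;
\item sliding the $b_j$ over and under $z_i$ unlinks that portion from the $2$-handles of $N$;
\item then $y_i$ retracts along $\gamma_i'$ and cancels $x_i$, and $z_i$ cancels a $3$-handle.
\end{itemize}

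Your fallback does not replace this step. ``Homotopy implies isotopy'' is a statement about circles in a $4$-manifold, as in the argument $X\times I\cong B^5$. Here the curves live in a $3$-manifold, and what has to change is their homotopy class in the $1$-handlebody of $N$, which requires sliding over $N$'s $2$-handles. For the same reason, your justification that the boundary identification is $\theta$ because ``the handles of $N$ survive untouched'' relies on a sequence of moves that cannot be carried out.
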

\begin{proof}
For simplicity, we first consider the case when $M \in \M_0$ and $N$ is obtained from $M$ by a zero-dot exchange. In our pictures, we will assume that $M$ is the Akbulut cork, but the argument is the same for every $M \in \M_0$.\\

\textbf{Special case:} As we saw in the proof of Theorem~\ref{thm1}, a Kirby diagram of $M \cup W_C$ appears as in Figure~\ref{fig3} (top right). Replacing $M$ by $N$ using the zero-dot exchange, we obtain a Kirby diagram for $N \cup W_C$ which appears as in Figure~\ref{fig7} (left), where we denote the 1- and 2-handles of $N$ by $y'$ and $x'$ because they are obtained from the 1- and 2-handles $x$ and $y$ of $M$ by a zero-dot exchange. Since the boundary diffeomorphism $\partial_+W_C \rightarrow \partial (N \cup W_C)$ is canonical, a Kirby diagram of $N \cup W_C \cup -W_C$ is obtained by adding 0-framed 2-handles $z_i$ along the belt spheres of the 2-handles $y_i$ of $W_C$, hence it looks like Figure~\ref{fig7} (right). Notice that after the zero-dot exchange the $y_i$, which used to link the 2-handle $y$ of $M$, now link the 1-handle $y'$ of $N$ at the meridians $\mu'$. Our goal is to unlink the $y_i$ from $y'$ while preserving the rest of the picture (see Figure~\ref{fig8}).

First, we slide $y_1$ over $x'$ to obtain $\on{lk}(y_1, y') = 0$, as in Figure~\ref{fig9}. Letting $s_1$ be the strand of $y_1$ which follows $x'$, we wish to move it outside of $y'$. We claim that this can be accomplished if we allow self-crossing changes for $s_1$; see Figure~\ref{fig10} for an example. Each crossing change can be realized by sliding $y_1$ over $z_1$ using an appropriate arc. Since that changes the framing of $y_1$, we also slide it under $z_1$ using a trivial arc to correct the framing. This process is described in \cite[Proposition~3]{Kirby}.

Repeating this for the other indices, we are able to unlink each $y_i$ from $y'$ at the meridian $\mu'$. After that, the $y_i$ can be isotoped across the paths $\gamma_i$, resulting in the Kirby diagram of Figure~\ref{fig11}. It remains to cancel the handle pairs $x_i, y_i$, and the handles $z_i$ with the 3-handles coming from $-W_C$, leaving us with a Kirby diagram of $N$. This completes the proof for the case when $M$ and $N$ are related by a zero-dot exchange. \\

\textbf{General case:} While the Kirby diagram of $M \cup W_C$ still looks like Figure~\ref{fig3} (top right) --- though the 2-handle $y$ may not be unknotted --- the replacement $M \rightsquigarrow N$ is now more complicated. This is because the boundary map $\theta : \partial M \rightarrow \partial N$ is abstract, and so even though the relative handle structure $(W_C, \partial N)$ is the same as $(W_C, \partial M)$, we cannot track the 2-handles of $W_C$ in $N \cup W_C$ explicitly. However, the idea of the special case works regardless.

We can envision a Kirby diagram $K_1$ of $N \cup W_C$ as follows: start with a Kirby diagram $K_N$ of $N$ and add $m$ 1-handles $x_i'$. Next, we add $m$ 2-handles $y_i'$ whose attaching circles are described as follows. Recall that in the Kirby diagram of Figure~\ref{fig3} (top right) the attaching circle of $y_i$ is obtained from the meridians $\mu_K$ and $\mu_i$ (of $y$ and $x_i$, respectively) by connecting them using the band $\gamma_i$. Let $\mu'$ and $\gamma_i'$ be the images of $\mu_K$ and $\gamma_i$ in $K_N$, obtained by applying the map $\theta : \partial M \rightarrow \partial N$. For convenience, let $\mu'_i$ denote the copy of $\mu'$ near the endpoint of $\gamma_i'$. Since $\theta$ can be assumed to be supported away from the 1-handles $x_i$, we may preserve the notation for the $\mu_i$. Now, it can be seen that the attaching circles of the 2-handles $y_i'$ are obtained by connecting $\mu'_i$ and $\mu_i$ using the bands $\gamma_i'$. This completes our description of the Kirby diagram $K_1$. 

The Kirby diagram $K_2$ of $N \cup W_C \cup -W_C$ is obtained from $K_1$ by attaching 0-framed 2-handles $z_i$ along the meridians of the $y_i'$, as before. Once again, our goal is to use these meridians to unlink the $y_i'$ from the handles of $K_N$. The difference is that now $\mu'_i$ and $\gamma_i'$ are more complicated, but it turns out that this is of no consequence. 

First, we observe that even though $\gamma_i'$ can be more complicated than $\gamma_i$ since we have applied the map $\theta$, it is still a ribbon band. In particular, if we could unlink the $\mu'_i$ portion of the $y_i'$ from the handles of $K_N$, we would be able to isotope the 2-handles $y_i'$ across $\gamma_i'$ until they appear as meridians of the 1-handles $x_i'$, just like in the special case. Then we would be able to cancel the extra handles, leaving the Kirby diagram $K_N$ of $N$. 

Let $a_1, \ldots, a_p$ and $b_1, \ldots, b_p$ be the 1- and 2-handles of $N$, respectively, where each $b_j$ must homotopically cancel $a_j$ since $N$ is AC. Note that the curves $\mu'_i$ \enquote{spell out} a word in the $a_j$. We can slide $y_i'$ over the $b_j$ where necessary until this word becomes algebraically trivial, such as $a_1 a_1^{-1}a_2 a_2^{-1}$. This means that the $\mu_i'$ go through the $a_j$ geometrically, but not algebraically. In particular, they can be isotoped away from the $a_j$ if we allow self-crossing changes. These can be realized by sliding $y_i'$ over and under $z_i$ as in the special case. It remains to unlink the $\mu_i'$ from the 2-handles $b_j$, which is accomplished by sliding $b_j$ over/under $z_i$. This completes the proof of the lemma.
\end{proof}

\begin{figure}[htbp]
  \centering
  \includegraphics[width=0.42\textwidth]{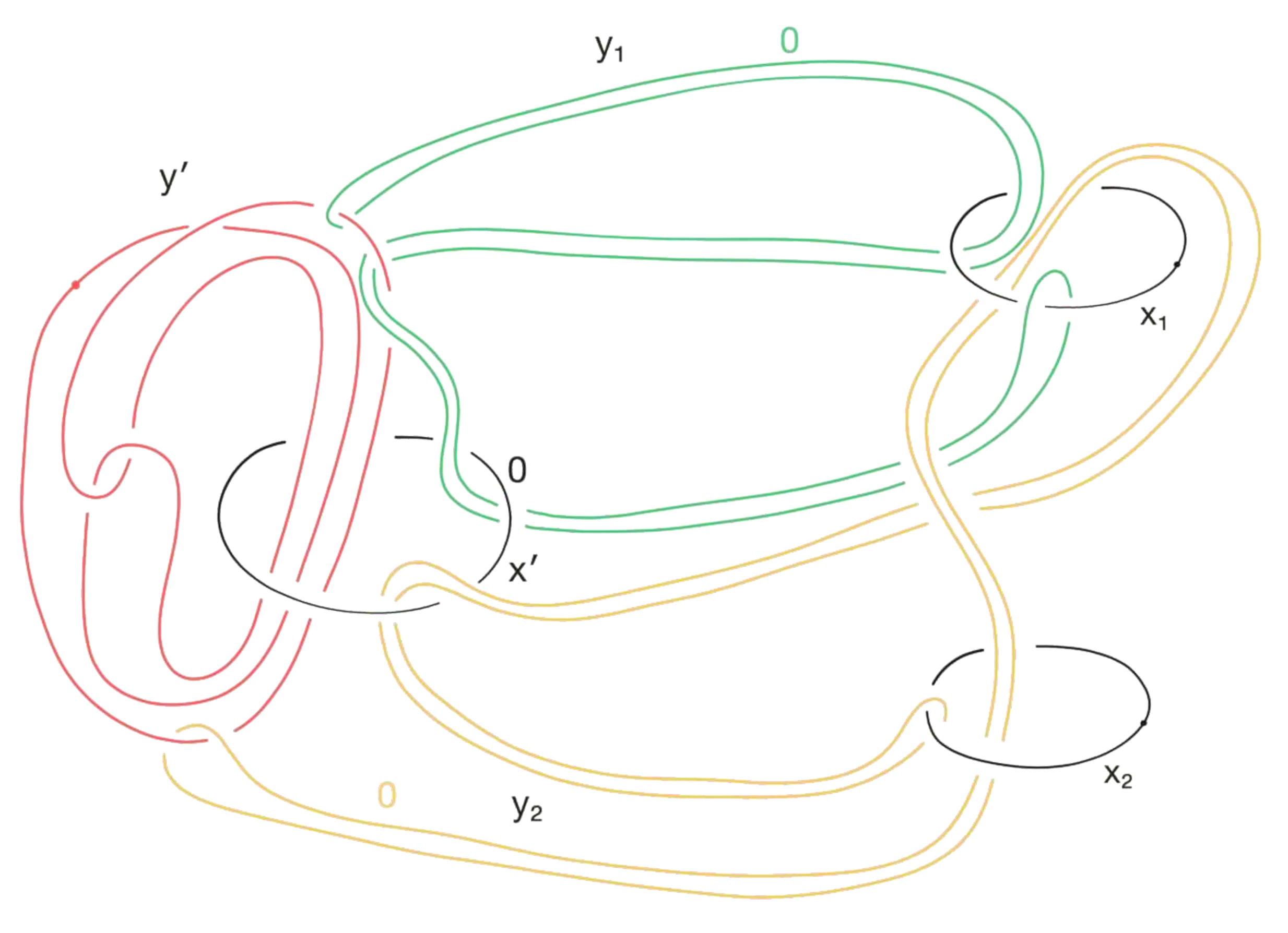} 
  \hspace{0.5 cm}
  \includegraphics[width=0.42\textwidth]{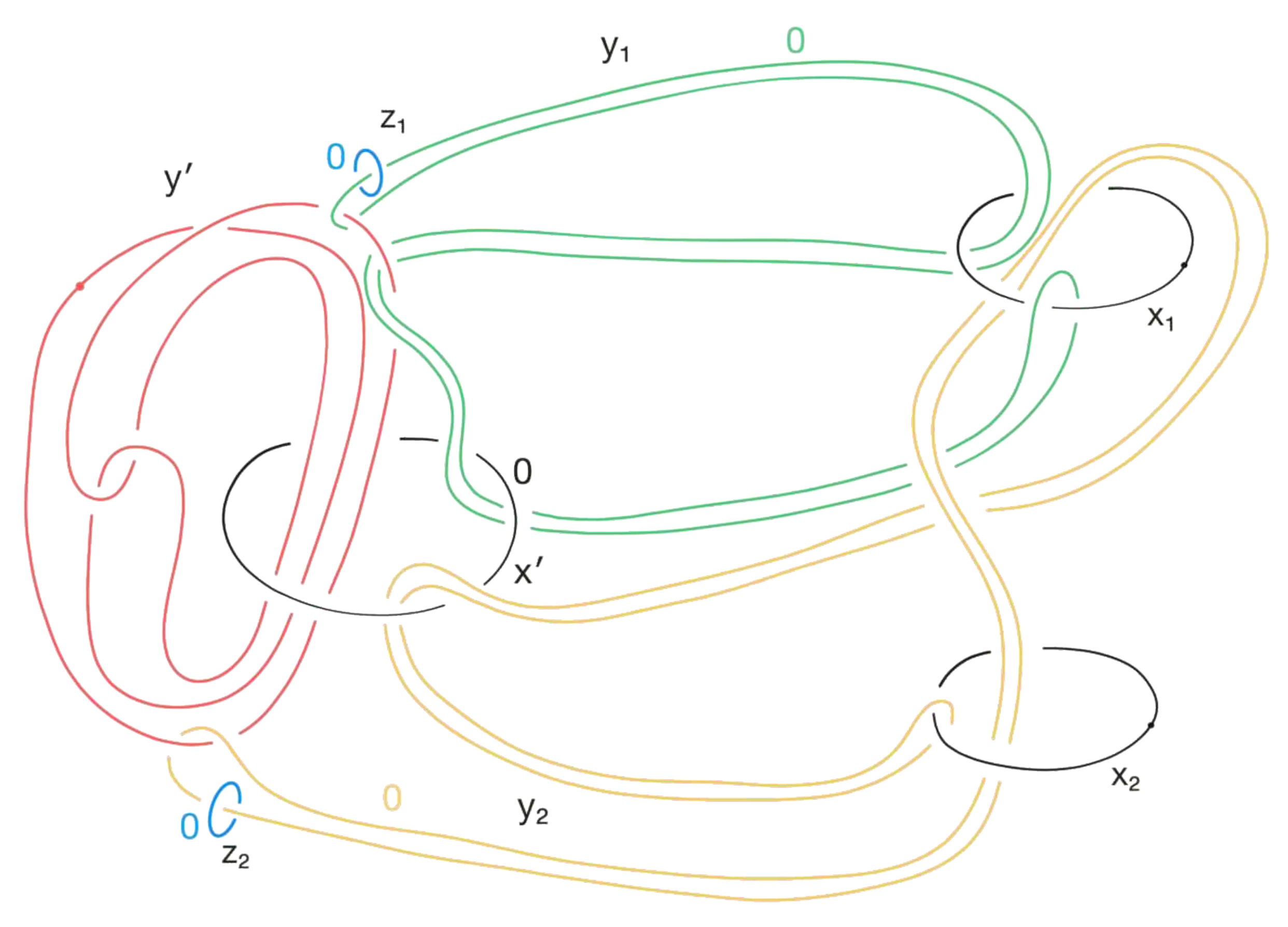}
  \caption{\centering Left: a Kirby diagram of $N \cup W_C$ in the special case.\\ Right: a Kirby diagram of $N \cup W_C \cup -W_C$ in the special case.}
  \label{fig7}
\end{figure}

\begin{figure}[htbp]
  \centering
  \includegraphics[width=0.4\textwidth]{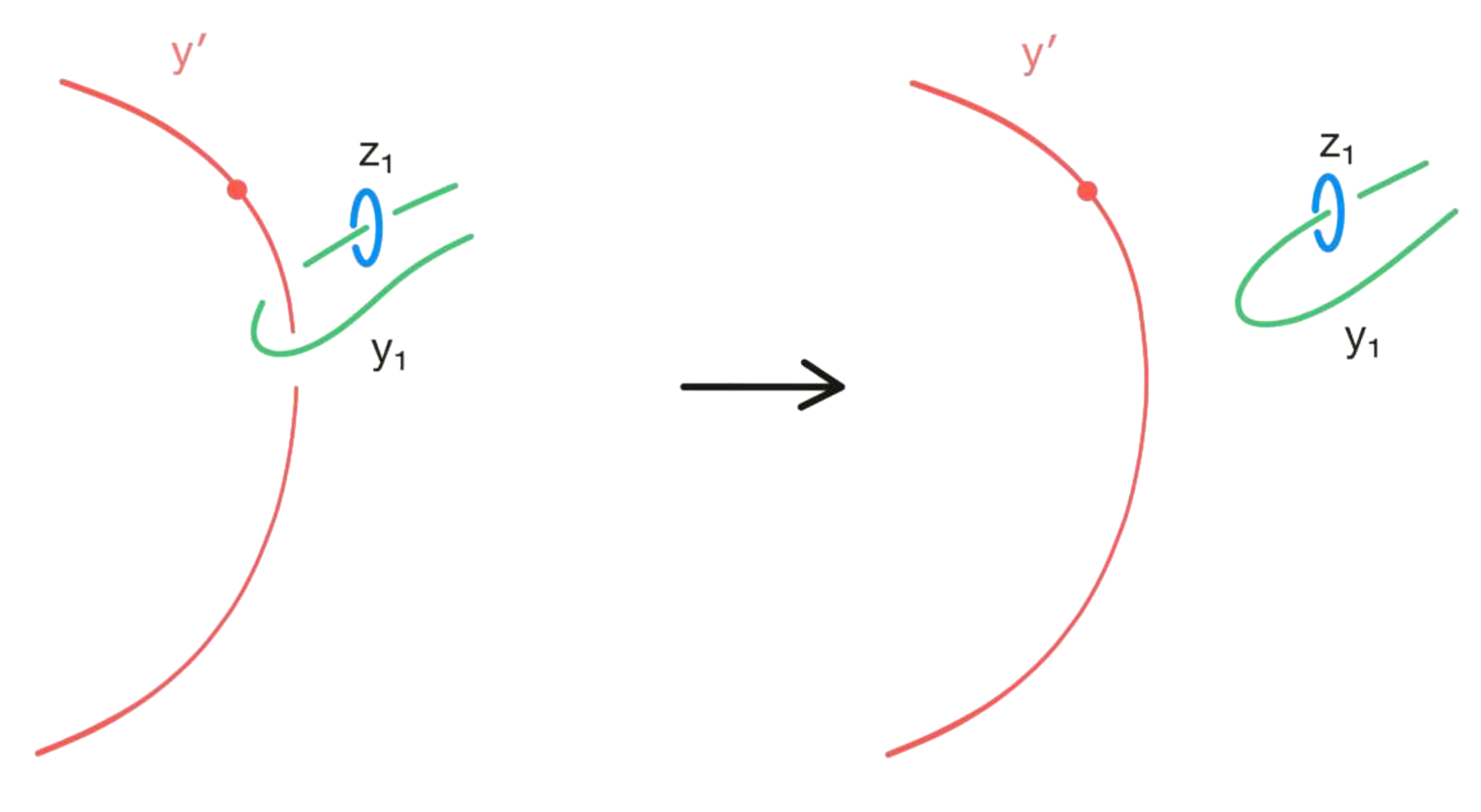} 
  \caption{The transformation used to prove the special case of Lemma~\ref{lemD}.}
  \label{fig8} 
\end{figure}

\begin{figure}[htbp]
  \centering
  \includegraphics[width=0.5\textwidth]{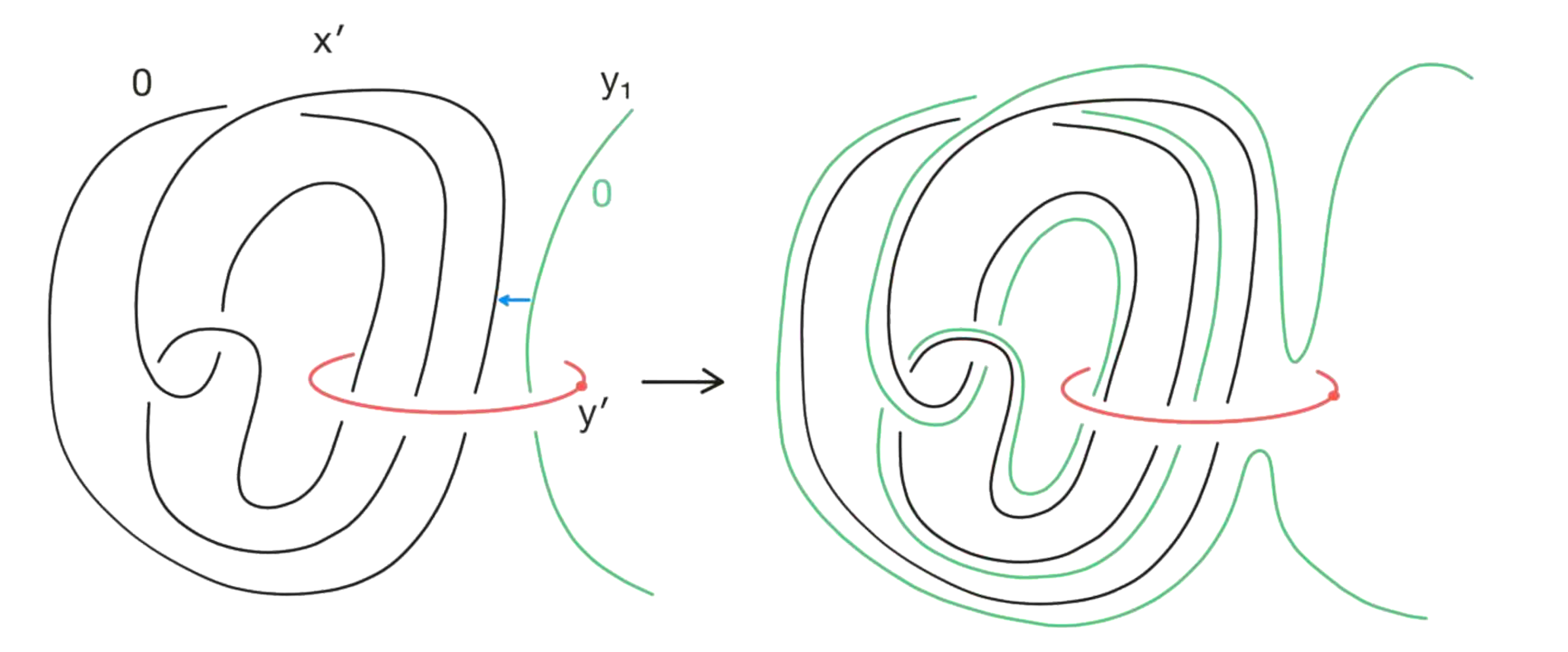}
  \caption{An example of a slide of $y_1$ over $x'$ that results in $\on{lk}(y_1, y') = 0$.}
  \label{fig9}
\end{figure}

\begin{figure}[htbp]
  \centering
  \includegraphics[width=0.4\textwidth]{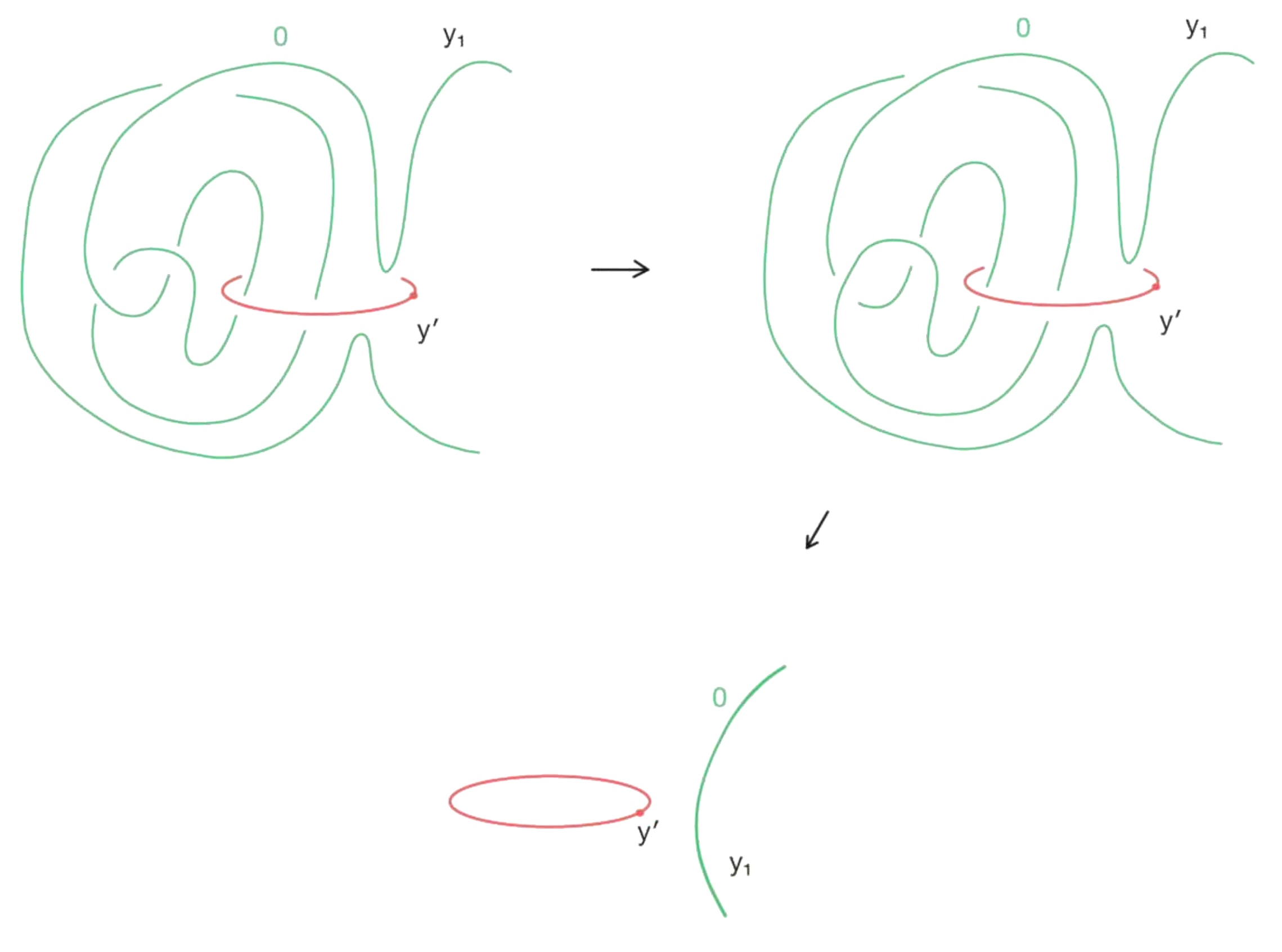}
  \caption{An example of how to unlink the strand $s_1$ from $y'$ using self-crossing changes.}
  \label{fig10}
\end{figure}

\begin{figure}[htbp]
  \centering
  \includegraphics[width=0.4\textwidth]{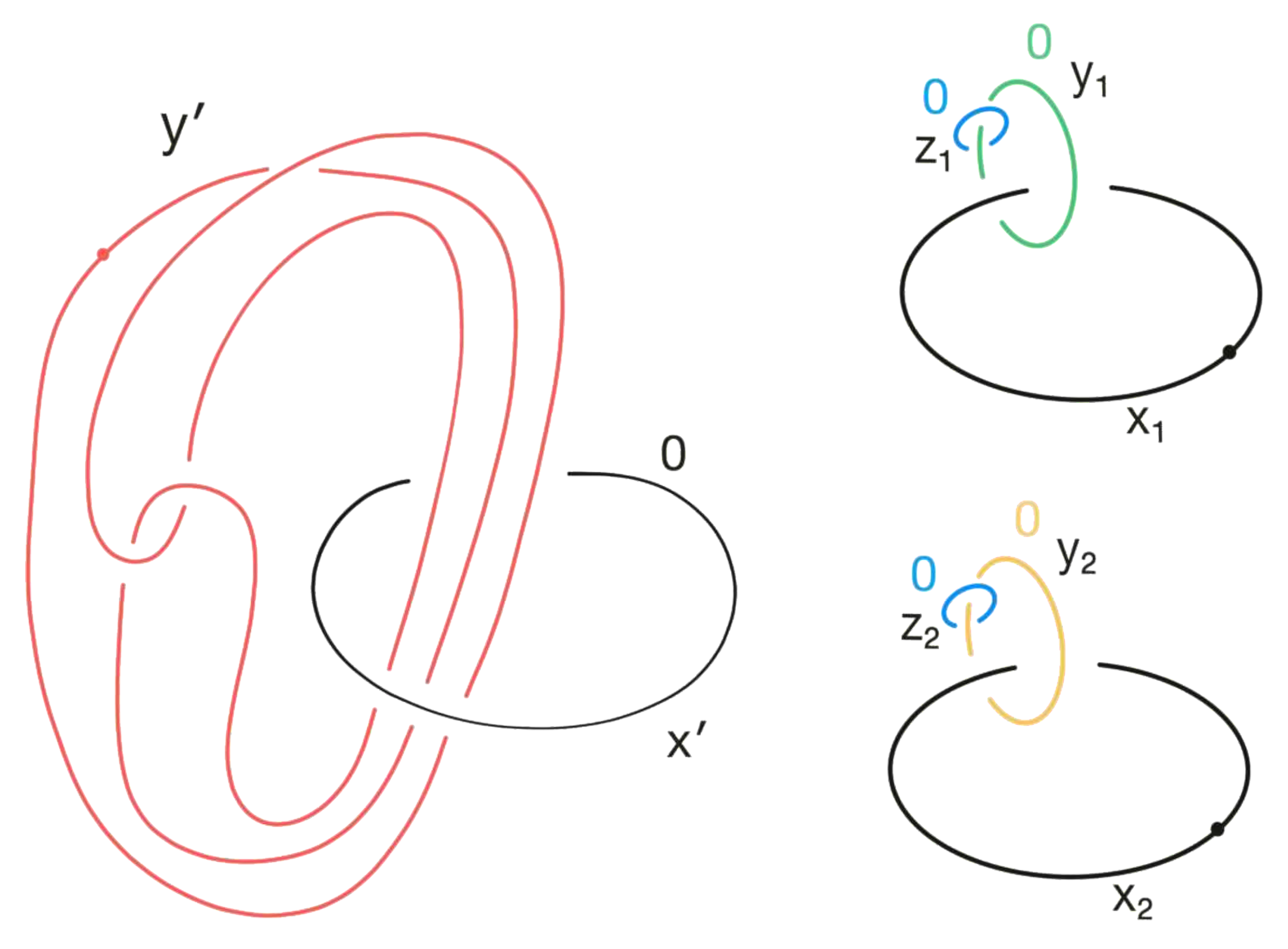}
  \caption{A Kirby diagram showing that $N \cup W_C \cup -W_C \cong N$ in the special case.}
  \label{fig11}
\end{figure}

\begin{remark}\label{rem6.2}
Since the embedding $\varphi' : M' \hookrightarrow S^4$ is exotic, the restriction $\varphi'|_{\partial M'}$ must be topologically but not smoothly isotopic to the restriction $\varphi_{M'}|_{\partial M'}$ of the standard embedding of $M'$ (compare to Remark~\ref{rem2.6}). Thus, Theorem~\ref{thm4} also provides interesting embeddings of the homology spheres $\partial M'$.
\end{remark}

\begin{remark}\label{rem6.3}
    Using the same ideas, it can also be shown that $N \cup W_R \cup -W_R \cong N$ for a general ribbon homology cobordism $W_R$ from $\partial M$ to $\partial M'$ as long as it is AC. However, such a cobordism may not satisfy $W_R \cup -M' \cong -M$, which is essential for Theorem~\ref{thm4}, as we discuss below. 
\end{remark}

\section{Further questions}\label{7}
We note that Definition~\ref{ribbon} may seem somewhat artificial since it introduces a relation between smooth 4-manifolds which depends on their handle decomposition. Instead, one could relate two contractible 4-manifolds $X, X' \in \C$ by a general ribbon homology cobordism from $\partial X$ to $\partial X'$. While this approach may be more natural for general contractible 4-manifolds, Mazur manifolds are a different story. Their name implicitly suggests that they have handle decompositions with a single 2-handle. Hence, it makes sense to explicitly choose such decompositions and consider a relation based on them instead.

Nonetheless, one could still connect the Mazur manifolds $M$ and $M'$ by a general ribbon homology cobordism $W_R$ from $\partial M$ to $\partial M'$, which leads to a link
$$L_R : M \sqcup -M' \hookrightarrow \Sigma = M \cup W_R \cup -M'.$$
However, we are unable to show that the homotopy 4-sphere $\Sigma$ is standard in general. On the other hand, the smooth obstruction still applies since $W_R$ is ribbon; thus $L_R$ cannot be smoothly split by an $S^3$. In fact, the topological obstruction also works since even a general ribbon homology cobordism cannot kill the fundamental group of $\partial M$, as we shall see below. However, we do not highlight this observation because we are not aware of any method of finding such $W_R$, other than surgering a ribbon concordance $C$ between the knots $K$ and $K'$ defining the Mazur manifolds as in Theorem~\ref{thm1}.

Another issue is that Theorem~\ref{thm4} breaks down in this case. As we noted in the introduction, this result essentially says that exotica travels across ribbon homology cobordisms $W_C$. But these are obtained by surgering the ribbon concordances $C$, and so are very particular. It is true that for a general ribbon homology cobordism $W_R$ one can still consider the different links used in the proof of Theorem~\ref{thm4}, while Lemma~\ref{lemD} holds as long as $W_R$ is AC. However, as we noted in Remark~\ref{rem6.3}, we do not have a diffeomorphism $W_R \cup -M' \cong -M$. Similarly, we would not have $W_R \cup -N' \cong -N$, and so it does not follow that the replacement $M' \rightsquigarrow_{\theta'} N'$ does anything interesting smoothly.\\

Alternatively, one could ask whether the ribbon condition can be relaxed. It is shown in \cite{Yildiz} that any two freely homotopic knots $K$ and $K'$ in $S^1 \times S^2$ are concordant. It follows that given any pair of Mazur manifolds $M = M(K)$ and $M' = M(K')$ with the same framing, one can find a smooth concordance $C \subset S^1 \times S^2 \times I$ from $K$ to $K'$. Surgering $C$ as before, we obtain a homology cobordism $W_C$ from $\partial M$ to $\partial M'$, although it need not be ribbon. All the same, one obtains a link $$L_C : M \sqcup -M' \hookrightarrow \Sigma =  M \cup W_C \cup -M'.$$ 

The first caveat is that without the ribbon assumption, we are unable to show that $\Sigma$ is diffeomorphic to $S^4$. Of course, depending on perspective, non-split links in a potentially exotic homotopy 4-sphere can be even more interesting, but there is a second caveat: if the concordance $C$, and hence the homology cobordism $W_C$, is not ribbon, then we have no control over the fundamental group of $W_C$ or the induced cobordism maps $F_{W_C}^{\circ}$, which prevents us from obstructing splittings of the link.\\

We run into a somewhat similar problem if we try to relax the condition $n \neq 0$ of Theorem~\ref{thm2}.
The main idea of the construction is that once we \enquote{lift} the links $L_C$ to $L_{C, n} \subset \#^n \BC \BP^2$, we are able to use the second homology to kill the fundamental group of the complement $\#^n \BC \BP^2 \backslash L_{C, n}$. One can ask whether the second homology is necessary here. We claim that the answer is essentially yes.

Assume we have a smooth link $L : M \sqcup -M' \hookrightarrow S^4$ such that the complement $W = S^4 \backslash L$ is a ribbon homology cobordism \textit{and} simply connected. The former implies that $W$ must consist of an equal number of 1- and 2-handles $x_i, y_i$, while the latter implies that the following group is trivial:
$$\langle \pi_1(\partial M), x_1, \dots, x_k \mid y_1, \dots, y_k \rangle = 1.$$
However, since $\pi_1(\partial M)$ is nontrivial, this contradicts the Kervaire--Laudenbach conjecture \cite{Klyachko}. While the conjecture remains open overall, it is settled for residually finite groups (see \cite{Gerstenhaber}, \cite{Chen}). Finally, it follows from \cite{Hempel} and Perelman's proof of the Poincaré conjecture \cite{Perelman} that all closed 3-manifolds have residually finite fundamental group. This means that the complement $W$ cannot be both simply connected and a ribbon homology cobordism. 

Of course, this does not mean that there are no exotic (2-component) links of Mazur manifolds in $S^4$. However, our obstruction relies heavily on the ribbonness of the cobordism, which, in particular, allows us to avoid any explicit computation of Heegaard Floer homology. On the other hand, in order to obstruct a general homology cobordism from admitting a splitting by $S^3$ using the cobordism map (on $HF$, or perhaps another invariant), one would likely need a more specific construction and computation. In any case, we pose the following question.

\begin{quest}\label{Q1} Find a smooth link $L : M \sqcup M' \hookrightarrow S^4$ of Mazur manifolds such that $L$ is split by a topological $S^3$, but not by a smooth one. 
\end{quest}

At the end of Section~\ref{2.2} we came upon an interesting question about smoothly nonstandard links with standard components, which we state here.

\begin{quest}\label{Q2}
Find a \textit{strongly exotic} link of Mazur manifolds $L : M \sqcup M' \hookrightarrow \#^n \BC \BP^2$, i.e., a smooth link $L$ which is topologically but not smoothly isotopic to the unlink, yet restricts to smoothly standard embeddings of $M$ and $M'$.
\end{quest}

Such a link can be detected by obstructing a smooth splitting by $S^3$, but there may also exist strongly exotic links that are smoothly split because the smooth Schoenflies conjecture is open in dimension four \cite[Problem~4.23]{K3}. Indeed, assume that $L : M \sqcup M' \hookrightarrow S^4$ is a smooth link such that its restriction to each component is standard and there is a smooth, separating embedding $\varphi : S^3 \hookrightarrow S^4 \backslash L$. It does not follow that $L$ is smoothly standard because $\varphi(S^3)$ may not bound a smooth 4-ball.\\

Another natural question is about exotic links in indefinite 4-manifolds, the simplest being $S^2 \times S^2$. One can lift the links $L_C \subset S^4$ from Theorem~\ref{thm1} to topologically standard links $L_{C}' \subset S^2 \times S^2$ in a similar way: start with the diagram of $M \cup W_C \subset S^4$ as in Figure~\ref{fig3} (top right), attach a $0$-framed 2-handle $z_1$ along a meridian of $y$ and then add another $0$-framed 2-handle $z_2$ at the meridian of $z_1$. Since a Dehn surgery on $z_1$ and $z_2$ preserves the 3-manifold by a slam-dunk move, we obtain a link $L_{C}' : M \sqcup -M' \hookrightarrow S^2 \times S^2$. Note that unlike in Theorem~\ref{thm2}, this time the framing of $M$ does not change. Since $z_1$ kills $\pi_1(\partial M)$, the complement $W_C' = S^2 \times S^2 \backslash L_C'$ is simply connected. It follows from Proposition~\ref{biglink} that $L_C'$ is topologically isotopic to the unlink. 

The main difference with the case of $\#^n \BC \BP^2$ is that our smooth obstruction fails. Decomposing $W_C'$ as $W' \cup W_C$ like before, we obtain a smooth cobordism $W'$ from $\partial M$ to $\partial M$ consisting of two $0$-framed 2-handles. However, the induced map $F_{W'} : HF^+_{\mathrm{red}}(\partial M) \rightarrow HF^+_{\mathrm{red}}(\partial M)$ happens to vanish. This is essentially because $S^2 \times S^2$, and thus also $W'$, contains a smoothly embedded sphere with self-intersection $0$. We ask the following question.

\begin{quest}\label{Q3} 
Find a smooth link $L : M \sqcup -M' \hookrightarrow S^2 \times S^2$ which is split by a topological $S^3$, but not by a smooth one. 
\end{quest}

We draw the reader's attention to the fact that we have implicitly assumed that all Mazur manifolds are oriented and their embeddings are orientation-preserving. This is why we write $M \sqcup -M$ and not $M \sqcup M$, for example. Similarly, the link $L_C : M \sqcup -M' \hookrightarrow S^4$ from Theorem~\ref{thm1} is orientation-preserving, and if we viewed it as a link of $M \sqcup M'$, its restriction to $M'$ would be orientation-reversing. With our current methods, we are unable to account for this orientation change. Letting $(M, M')$ be a ribbon pair via a concordance $C$, one can ask:

\begin{quest}\label{quest:orientation2}
Does there exist an orientation-preserving link $L : M \sqcup M' \hookrightarrow S^4$ which is not split by an $S^3$? What about a link $L_n : M_n \sqcup M' \hookrightarrow \#^n \BC \BP^2$?
\end{quest}

Note that if $C$ is the trivial concordance, then $M' \cong M$; thus the first question asks, in particular, for a non-split, orientation-preserving link $L : M \sqcup M \hookrightarrow S^4$.\\

A comparison with the Golla--Marengon links is due. Similarly to them, we use the fundamental group to obstruct topological splittings by $S^3$. In certain cases, they obstruct smooth splittings by some homology spheres using Heegaard Floer homology, which has been an inspiration for our ideas. However, they do not obtain links that are split topologically but not smoothly, which we achieve in $\#^n \BC \BP^2$ for $n \neq 0$. On the other hand, while they only state their results for 2-component links, the construction generalizes to links with any number of components in a straightforward way. In contrast, our methods are inherently binary and only work for 2-component links. The following question was suggested to the author by Peter Kronheimer.

\begin{quest}\label{Q4}
    For $m > 2$, find a link of Mazur manifolds $L : M_1 \sqcup M_2 \sqcup \cdots \sqcup M_m \hookrightarrow S^4$ such that for every $i \neq j$ the images $L(M_i)$ and $L(M_j)$ are not split (topologically or smoothly) by an $S^3$.
\end{quest}

A natural starting point would be to assume $m = 3$ and that the $M_i$ are diffeomorphic up to orientation reversal. Then two of them must have the same orientation; thus $L$ would have to restrict to a non-split link $M \sqcup M \hookrightarrow S^4$, whose existence remains a mystery to us.

\bibliographystyle{amsalpha}
\bibliography{bib}

\bigskip
\bigskip

\noindent \textsc{Department of Mathematics, Columbia University, New York, NY 10027, USA} \\
\textit{Email address}: \texttt{sn3018@columbia.edu}

\end{document}